\newtheorem{thm}{Theorem} 
\newtheorem{thm*}{Theorem*}
\newtheorem{lemma}[thm]{Lemma} 
\newtheorem{theorem}[thm]{Theorem}
\newtheorem{proposition}[thm]{Proposition}
\newtheorem{remark}[thm]{Remark} 
\newtheorem{claim}[thm]{Claim}
\newtheorem{definition}[thm]{Definition}
\DeclareMathOperator{\mdl}{\rm mod}
\def\LL{{\mathscr L}}
\def\od#1{{\overline{d}(#1)}}
\def\LL{{\mathcal L}}
\def\tred#1{{\textcolor{black}{#1}}}
\def\floor#1{{\lfloor{#1}\rfloor}}
\def\bigfloor#1{{\bigl\lfloor{#1}\bigr\rfloor}}
\def\ceil#1{{\lceil{#1}\rceil}}
\def\ignore#1{}
\def\rot#1#2{{\text{\rm rot}}_{#1}(#2)}
\def\Bur{{\overline{C}_6}}
\def\Rot#1{{\text{\rm Rot}{(#1)}}}
\def\MRot#1{{\text{\rm Rot}_{M}{(#1)}}}
\def\Key#1{{\Upphi(#1)}}
\def\PKey#1{{\Upphi^{1}(#1)}}
\def\cc{{\mathcal C}}
\newcommand{\cru}{\operatorname{cr}}
\def\ucr#1#2{{\text{\rm cr}}_{#1}(#2)}
\def\floor#1{{\lfloor{#1}\rfloor}}
\def\bigfloor#1{{\biggl\lfloor{#1}\biggr\rfloor}}
\def\ceil#1{{\lceil{#1}\rceil}}
\title[The optimal drawings of $K_{5,n}$]{
  The optimal drawings of $K_{5,n}$
        }
\author{C\'esar Hern\'andez-V\'elez}
\address{Instituto de F\'{\i}sica, UASLP. San Luis Potos\'{\i}, M\'exico.}
\email{cesar@ifisica.uaslp.mx}
\author{Carolina Medina}
\address{Instituto de F\'{\i}sica, UASLP. San Luis Potos\'{\i}, M\'exico.}
\email{cmedina@ifisica.uaslp.mx}
\author{Gelasio Salazar}
\address{Instituto de F\'{\i}sica, UASLP. San Luis Potos\'{\i}, M\'exico.}
\email{gsalazar@ifisica.uaslp.mx}
\thanks{The third author was supported by CONACYT grant 106432.}
\date{\today}
\keywords{Crossing number, Tur\'an's Brickyard Problem, Zarankiewicz Conjecture, optimal drawings,
  antipodal vertices}
\subjclass[2010]{05C10, 05C62, 68R10}
\begin{document}

\linenumbers

\reversemarginpar

\begin{abstract}
Zarankiewicz's Conjecture (ZC) states that the crossing number
$\cru(K_{m,n})$ equals $Z(m,n):=\floor{\frac{m}{2}}
\floor{\frac{m-1}{2}} \floor{\frac{n}{2}} \floor{\frac{n-1}{2}}$. Since
Kleitman's verification of ZC for $K_{5,n}$ (from which ZC for
$K_{6,n}$ easily follows), very little progress has been made around
ZC; the most notable exceptions involve computer-aided results. With
the aim of gaining a more profound understanding of this notoriously
difficult conjecture, we investigate the {\em optimal} (that is,
crossing-minimal) drawings of
$K_{5,n}$. The widely known natural drawings of $K_{m,n}$ (the
so-called {\em Zarankiewicz drawings}) with
$Z(m,n)$ crossings contain {\em antipodal} vertices, that is, pairs of
degree-$m$ vertices such that their induced drawing of $K_{m,2}$ has
no crossings. Antipodal vertices also play a major role in Kleitman's
inductive proof that $\cru(K_{5,n}) = Z(5,n)$.  We explore in depth
the role of antipodal vertices in optimal drawings of $K_{5,n}$, for
$n$ even. We prove that if
{$n \equiv 2$ (mod $4$)}, then every optimal drawing
of $K_{5,n}$ has antipodal vertices. We also exhibit a two-parameter
family of optimal drawings $D_{r,s}$ of $K_{5,4(r+s)}$ (for $r,s\ge 0$), with
no antipodal vertices, and show that if $n\equiv 0$ (mod $4$), then
every optimal drawing of $K_{5,n}$ without
antipodal vertices is (vertex rotation) isomorphic to $D_{r,s}$ for some
integers $r,s$.
As a
corollary, we show that if $n$ is even, then
every optimal drawing of
$K_{5,n}$ is the superimposition of Zarankiewicz drawings with a
drawing isomorphic to $D_{r,s}$ for some nonnegative integers $r,s$.
\end{abstract}

\maketitle

\section{Introduction. }\label{sec:intro}

We recall that the {\em crossing number} $\cru(G)$ of a graph $G$ is
the minimum number of pairwise crossings of edges in a drawing of $G$
in the plane. A drawing of a graph is {\em good} if no adjacent edges
cross, and no two edges cross each other more than once. It is trivial
to show that every {\em optimal} (that is, crossing-minimal)
drawing of a graph is good.


One of the most tantalizingly open crossing number questions was
raised by Tur\'an in 1944: what is the crossing number
$\cru(K_{m,n})$ of the
complete bipartite graph $K_{m,n}$? Zarankiewicz~\cite{zar2} described how
to draw $K_{m,n}$ with exactly $Z(m,n)$ crossings, where 
\[
Z(m,n):=\bigfloor{\frac{m}{2}} \bigfloor{\frac{m-1}{2}}
\bigfloor{\frac{n}{2}} \bigfloor{\frac{n-1}{2}}.
\]

\begin{figure}[h]
\begin{center}
\scalebox{0.7}{\input{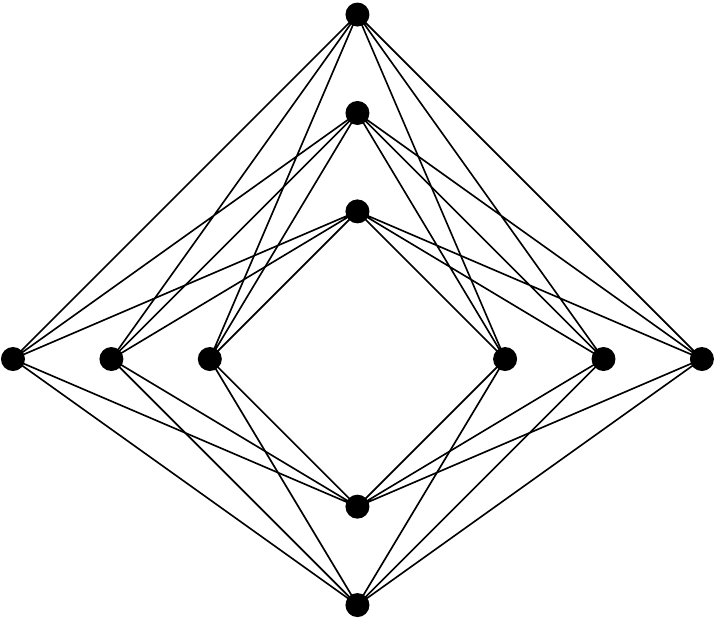_t}}
\caption{Drawing of $K_{5,6}$ with $Z(5,6)=24$ crossings.}
\label{fig:kmnzar}
\end{center}
\end{figure}

Zarankiewicz's construction is shown in Figure~\ref{fig:kmnzar}
for the case $m=5, n=6$. It is straightforward to generalize this
drawing to a drawing of $K_{m,n}$ with $Z(m,n)$ crossings, for all
positive integers $m$ and $n$, and so $\cru(K_{m,n})\le
Z(m,n)$. The drawings thus obtained are the {\em Zarankiewicz
  drawings} of $K_{m,n}$.

In~\cite{zar2},
Zarankiewicz claimed to have proved that $\cru(K_{m,n})=Z(m,n)$ for
all positive integers $m,n$. However, Kainen and Ringel independently found a flaw in
Zarankiewicz's argument (see~\cite{decline}), and the statement
$\cru(K_{m,n}) = Z(m,n)$ has become known as {\em Zarankiewicz's
    Conjecture}. 

Very little of substance is known about $\cru(K_{m,n})$. An elegant
argument using $\cru(K_{3,3})=1$ plus purely combinatorial arguments
(namely, Tur\'an's theorem on the maximum
number of edges in a triangle-free graph) shows that $\cru(K_{3,n}) =
Z(3,n)$. An easy counting argument shows that  
$\cru(K_{2s-1,n}) = Z(2s-1,n)$ (for any $s \ge 1$) implies 
that $\cru(K_{2s,n}) = Z(2s,n)$. Thus it follows that $\cru(K_{4,n}) =
Z(4,n)$. Kleitman~\cite{kleitman} proved that
$\cru(K_{5,n})=Z(5,n)$. By our previous remark, this
implies that $\cru(K_{6,n}) = Z(6,n)$. 

After Kleitman's theorem, most progress around
Zarankiewicz's Conjecture consists of computer-aided
results. Woodall~\cite{woodall} verified Zarankiewicz's Conjecture for $K_{7,7}$ and
$K_{7,9}$. De Klerk et al.~\cite{dmp}~used semidefinite programming techniques to
show that $\lim_{n\to\infty} \cru(K_{7,n})/Z(7,n) \ge 0.968$. Also
using semidefinite programming and deeper algebraic techniques, De Klerk
et al.~\cite{dps} proved that
$\lim_{n\to\infty} \cru(K_{9,n})/Z(9,n) \ge 0.966$. In a related result, De
Klerk and Pasechnik~\cite{ds} recently showed that the $2$-page crossing number
$\nu_2(K_{7,n})$ of $K_{7,n}$ satisfies $\lim_{n\to\infty}
\cru(K_{7,n})/Z(7,n) =1$. 

We finally mention that recently Christian et al.~\cite{chri} proved that
deciding Zarankiewicz's Conjecture is a finite problem for each fixed $m$.




To give a brief description of our results, 
let us color the $5$ degree-$n$ vertices of
$K_{5,n}$ {\em black}, and color the $n$ degree-$5$ vertices {\em
  white}.  
Two white vertices are {\em antipodal} in a drawing $D$ of
$K_{5,n}$ if the drawing of the $K_{5,2}$ they induce has no crossings. A drawing is
{\em antipodal-free} if it has no antipodal vertices.

Antipodal pairs are evident in Zarankiewicz's drawings (moreover, the
set of white vertices can be decomposed into two classes, such that
any two white vertices in distinct classes are antipodal). Antipodal
pairs are also crucial in the inductive step of Kleitman's proof,
which does not 
concern itself with the different ways (if more than one) to achieve
$Z(5,n)$ crossings with a drawing of $K_{5,n}$.

Given their preeminence in Zarankiewicz's Conjecture, we set out to
investigate the role of antipodal pairs in the optimal drawings of
$K_{5,n}$.  Our main result (Theorem~\ref{thm:main1}) characterizes
optimal drawings of $K_{5,n}$, for even $n$, as follows.  First, if
{$n \equiv 2$ (mod $4$)}, then all optimal drawings of $K_{5,n}$ have
antipodal pairs. Second, if { $n \equiv 0$ (mod $4$)}, then every
antipodal-free optimal drawing of $K_{5,n}$ is isomorphic (we review
vertex rotation isomorphism in Section~\ref{sec:rots}) to a drawing in
a two-parameter family $D_{r,s}$ of drawings we have fully
characterized. As a consequence of these facts, we show
(Theorem~\ref{thm:main2}) that if $n$ is even, then every optimal
drawing of $K_{5,n}$ can be obtained by starting with $D_{r,s}$, for
some nonnegative (possibly zero) integers $r$ and $s$, and then
superimposing Zarankiewicz drawings.

The rest of this paper is organized as follows.  In
Section~\ref{sec:rots} we review the concept of vertex rotation, which
is central to the criterion to decide when two drawings are
isomorphic. In Section~\ref{sec:spdr} we describe the two-parameter
family of optimal, antipodal-free drawings $D_{r,s}$ (for integers
$r,s\ge 0$) of $K_{5,4(r+s)}$. In
Section~\ref{sec:main1} we state our main results.
Theorem~\ref{thm:main1} claims that (i) if {$n\equiv 2$ (mod $4$)}, then
every optimal drawing of $K_{5,n}$ has antipodal vertices; and that
(ii) if {$n\equiv 0$ (mod $4$)}, then every antipodal-free optimal
drawing of $K_{5,n}$ is isomorphic to $D_{r,s}$ for some integers
$r,s$ such that $4(r+s)=n$. In Theorem~\ref{thm:main2}
we state the decomposition of optimal drawings of $K_{5,n}$, along the
lines of the previous paragraph.  The proof of Theorem~\ref{thm:main2}
is also given in this section; the rest of the paper is devoted to the
proof of Theorem~\ref{thm:main1}.  In Section~\ref{sec:clean} we
introduce the concept of a {\em clean} drawing. Loosely speaking, a drawing
is clean if its white vertices can be naturally partitioned into
{\em bags}, so that vertices in the same bag have the same (crossing
number wise) properties. In Section~\ref{sec:keyscores} we introduce
{\em keys}, which are labelled graphs that capture the essential (crossing
number wise) information of a clean drawing. This abstraction (and the
related concept of {\em core}) will prove to be extremely useful for the
proof of Theorem~\ref{thm:main1}. In Section~\ref{sec:someprco} we
investigate which labelled graphs can be the key of a
relevant (clean, optimal, antipodal-free) drawing.  Cores are certain more
manageable subgraphs of keys, that retain all the (crossing number
wise) useful information of a key. 
We devote Sections~\ref{sec:someprrk1},
\ref{sec:someprrk2}, \ref{sec:someprrk3}, and 
\ref{sec:someprrk4} to the task of completely characterizing which
graphs can be the core of an antipodal-free optimal drawing. The
information in these sections is then put together in
Section~\ref{sec:thecores}, where we show that the core of every
optimal drawing is isomorphic either to the $4$-cycle or to the graph
$\Bur$ obtained by adding to the $6$-cycle a diametral edge.
The proof of Theorem~\ref{thm:main1}, 
given in Section~\ref{sec:proofmain}, is an easy consequence of this
full characterization of cores.

\section{Rotations and isomorphic drawings.}\label{sec:rots}

To help comprehension, throughout this paper we color the $5$ {degree-$n$} vertices in
$K_{5,n}$ {\em black}, and the $n$ {degree-$5$} vertices {\em white}. We label the
black vertices $0,1,2,3,4$. Unless otherwise stated, we label the
white vertices $a_0, a_1, \ldots, a_{n-1}$. {We adopt the notation}
{${[n]}:=\{0,1,\ldots,n-1\}$.}

Given vertices $a_i, a_j$ with $i,j\in {{[n]}}$, we let
$S(a_i)$ denote the {\em star} centered at $a_i$, that is, the
subgraph (isomorphic to $K_{5,1}$) induced by $a_i$ and the vertices
$0,1,2,3,4$.  If $D$ is a drawing of $K_{5,n}$, we let
$\ucr{D}{a_i,a_j}$ denote the number of crossings in $D$ that involve
an edge of $S(a_i)$ and an edge of $S(a_j)$, and we let
$\ucr{D}{a_i}:=\sum_{k\in{{[n]}},k\neq i}
\ucr{D}{a_i,a_k}$.
Formalizing the definition from Section~\ref{sec:intro}, $a_i$ and $a_j$ are {\em antipodal} ({\em in} $D$) if
$\ucr{D}{a_i,a_j}=0$.

The {\em rotation} $\rot{D}{a_i}$ of a white
vertex $a_i$ in a drawing $D$ is the cyclic permutation that records
the (cyclic) counterclockwise order in which the
edges leave $a_i$. 
We use the notation $01234$ for permutations, and
$(01234)$ for cyclic permutations. For instance, the rotation
$\rot{D}{a_3}$ of the vertex $a_3$ in the drawing $D$ in
Figure~\ref{fig:rot01} is $(02431)$: 
following a counterclockwise
order, if we start with the edge leaving from $a_3$ to $0$, then we
encounter the edge leaving to $2$, then the edge leaving to $4$, then
the edge leaving to $3$, and then the edge leaving to $1$. We emphasize
that a rotation is a cyclic permutation; that is,
$(02431),(24310),(43102),(31024)$, and $(10243)$ denote (are) the same
rotation.
We let $\Pi$ denote the set of all cyclic permutations of
{$0,1,2,3,4$}. Clearly, $|\Pi|= 5!/5=4! = 24$. 
The {\em rotation} $\rot{D}{i}$ of a black vertex $i$ is defined
analogously: for each {$i\in {{[5]}}$},  $\rot{D}{i}$ is a cyclic permutation of $a_0,
a_1, \ldots, a_{n-1}$.

The {\em rotation multiset $\MRot{D}$ of $D$} is the multiset (that is,
repetitions are allowed) containing the $n$ rotations $\rot{D}{a_i}$,
for $i=0,1,\ldots,n-1$. The {\em rotation set} $\Rot{D}$ of $D$ is the
underlying set (that is, no repetitions allowed) of $\MRot{D}$.
Thus, in the example of
Figure~\ref{fig:rot01}, {$\MRot{D} = [(04321),$ $ (04321), (01234), (02431)
]$} {(we use square brackets for multisets)}, and 
{$\Rot{D} = 
\{ 
(04321),(01234),$ $(02431)
\}$.} 

	\begin{figure}[h!]
	\begin{center}
		\scalebox{0.7}{\input{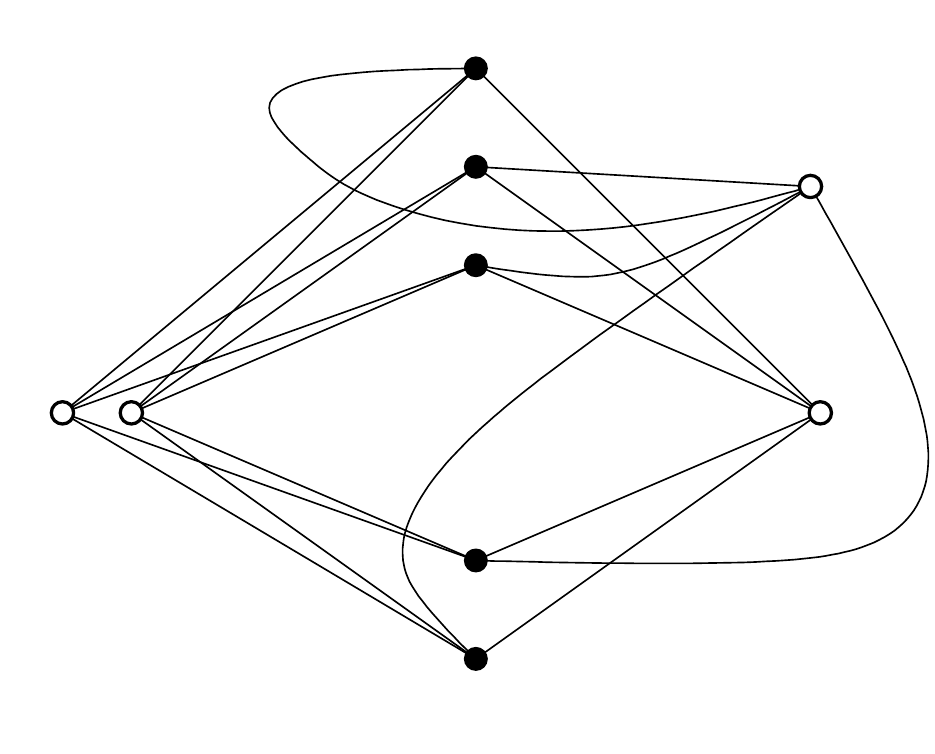_t}}
		 \caption{\small{A drawing $D$ of $K_{5,4}$ with
                     $\rot{D}{a_0}=\rot{D}{a_1}=(04321),
                     \rot{D}{a_2}=(01234)$, and
                     $\rot{D}{a_3}=(02431)$. Thus the pair $a_0,a_2$
                     (as well as the pair $a_1, a_2$) is antipodal.}}
		 \label{fig:rot01}
	\end{center}
	\end{figure}

Two multisets $M,M'$ of rotations are {\em equivalent} (we write
$M\cong M'$) if one of them can be obtained from the other by
a relabelling (formally, a self-bijection) of $0,1,2,3,4$. 
Two drawings $D,D'$ of $K_{5,n}$ are {\em isomorphic} if
{$\MRot{D}\cong\MRot{D'}$}.  Loosely speaking, two drawings $D,D'$ of
$K_{5,n}$ are isomorphic if $0,1,2,3,4$ and $a_0, a_1, \ldots,
a_{n-1}$ can be relabelled (say in $D'$), if necessary, so that
$\rot{D}{a_i} =\rot{D'}{a_i}$ for every $i\in [n]$.

Our ultimate interest lies in optimal drawings (of
$K_{5,n}$). It is not difficult to see (we will prove this later)
that  if $D$ is an optimal drawing and $a_i,a_j,a_k,a_\ell$ are
vertices such that 
$\rot{D}{a_i} = \rot{D}{a_j}$ and $\rot{D}{a_k} = \rot{D}{a_\ell}$,
then $\ucr{D}{a_i,a_k} = \ucr{D}{a_j,a_\ell}$. 
Thus an optimal drawing
of $K_{5,n}$ is adequately described by choosing a representative
vertex of each rotation, and giving the information of how many
vertices there are for each rotation. This supports the pertinence of
focusing on the rotations as the criteria for isomorphism.


\section{An antipodal-free drawing of $K_{5,4(r+s)}$}\label{sec:spdr}

In this section we describe an antipodal-free drawing $D_{r,s}$ of $K_{5,4(r+s)}$,
for each pair $r,s$ of nonnegative integers.

	\begin{figure}[ht!]
	\begin{center}
		\scalebox{0.5}{\input{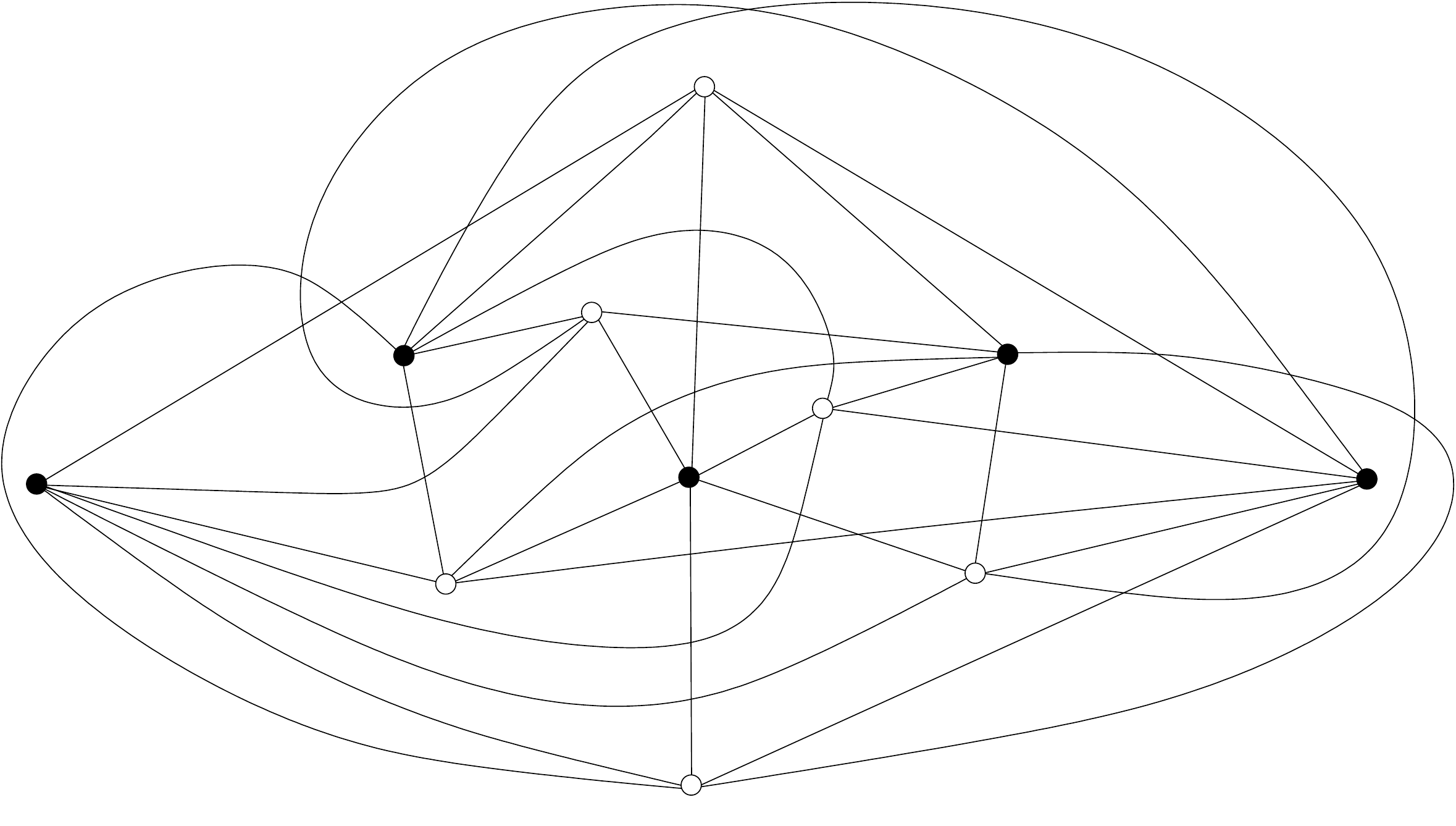_t}}
		 \caption{\small{This antipodal-free drawing $D^*$
                     of $K_{5,6}$ is the base of the construction of the
                     optimal antipodal-free drawing $D_{r,s}$ of $K_{5,4(r+s)}$ for all $r, s$. It is easily verified that 
		$\rot{D^*}{a_0}=(01234)$,
		$\rot{D^*}{a_1}=(04231)$,
		$\rot{D^*}{a_2}=(01342)$,
		$\rot{D^*}{a_3}=(04312)$,
		$\rot{D^*}{a_4}=(01432)$,
		$\rot{D^*}{a_5}=(02314)$.
 }}
		 \label{fig:k5601}
	\end{center}
	\end{figure}

The construction is based on the drawing $D^*$ of $K_{5,6}$ in
Figure~\ref{fig:k5601}. As shown, the rotations in $D^*$ of the white
vertices are 
$\rot{D^*}{a_0}$ $=(01234)$,
$\rot{D^*}{a_1}=(04231)$,
$\rot{D^*}{a_2}=(01342)$,
$\rot{D^*}{a_3}$ $=(04312)$,
$\rot{D^*}{a_4}=(01432)$,
$\rot{D^*}{a_5}=(02314)$.

It is immediately checked that $D^*$ is antipodal-free. Note that
$D^*$ itself is not optimal, as it has $25=Z(5,6)+1$ crossings.

Suppose first that both $r$ and $s$ are positive.
To obtain $D_{r,s}$, we add $4(r+s)-6$ white vertices to $D^*$. Now $r-1$
of these vertices are drawn very close to $a_1$, and  $r-1$ are
drawn very close to $a_2$;  $s-1$ vertices are drawn very close to
$a_4$, and $s-1$ are drawn very close to $a_5$; finally, $r+s-1$ vertices are drawn
very close to $a_0$, and  $r+s-1$  are drawn very close to $a_3$. It is
intuitively clear what is meant by having $a_i$ drawn ``very close''
to $a_j$. Formally, we require that: (i)
$a_i$ and $a_j$ have the same rotation; (ii) $\ucr{D_{r,s}}{a_i,a_j}=4$; and (iii) for any other vertex $a_k$,
$\ucr{D_{r,s}}{a_i,a_k}=\ucr{D_{r,s}}{a_j,a_k}$. These properties are
easily satisfied by having the added vertex $a_i$ drawn sufficiently close
to $a_j$, so that the edges incident with $a_i$ follow very closely
the edges incident with $a_j$. 

If one of $r$ or $s$ is $0$, then we make the obvious adjustments. 
That is, (i) if  $r=0$, then we remove $a_1$ and $a_2$, and for each $i=0,3,4,5$,
we draw $s-1$ new vertices very close to $a_i$; and (ii) if $s=0$, then we
remove $a_4$ and $a_5$, and for each $i=0,1,2,3$, we draw $r-1$ new
vertices very close to $a_i$. (In the extreme case $r=s=0$, we remove
all the white vertices from $D^*$, and are left with an obviously
optimal drawing of $K_{5,0}$). 

For each $i=0,1,2,3,4,5$, the {\em bag} $[a_i]$ of $a_i$ is the set that
consists of the vertices drawn very close to $a_i$, plus
$a_i$ itself.

Note that each of  $[a_0]$ and $[a_3]$ has $r+s$
vertices, each of $[a_1]$ and $[a_2]$ has $r$ vertices, and each of
$[a_4]$ and $[a_5]$ has $s$ vertices.

An illustration of the construction for $r=2$ and $s=1$ is given in
Figure~\ref{fig:rho}, where the gray vertices are the ones added to $D^*$.

	\begin{figure}[ht!]
	\begin{center}
		\scalebox{0.5}{\input{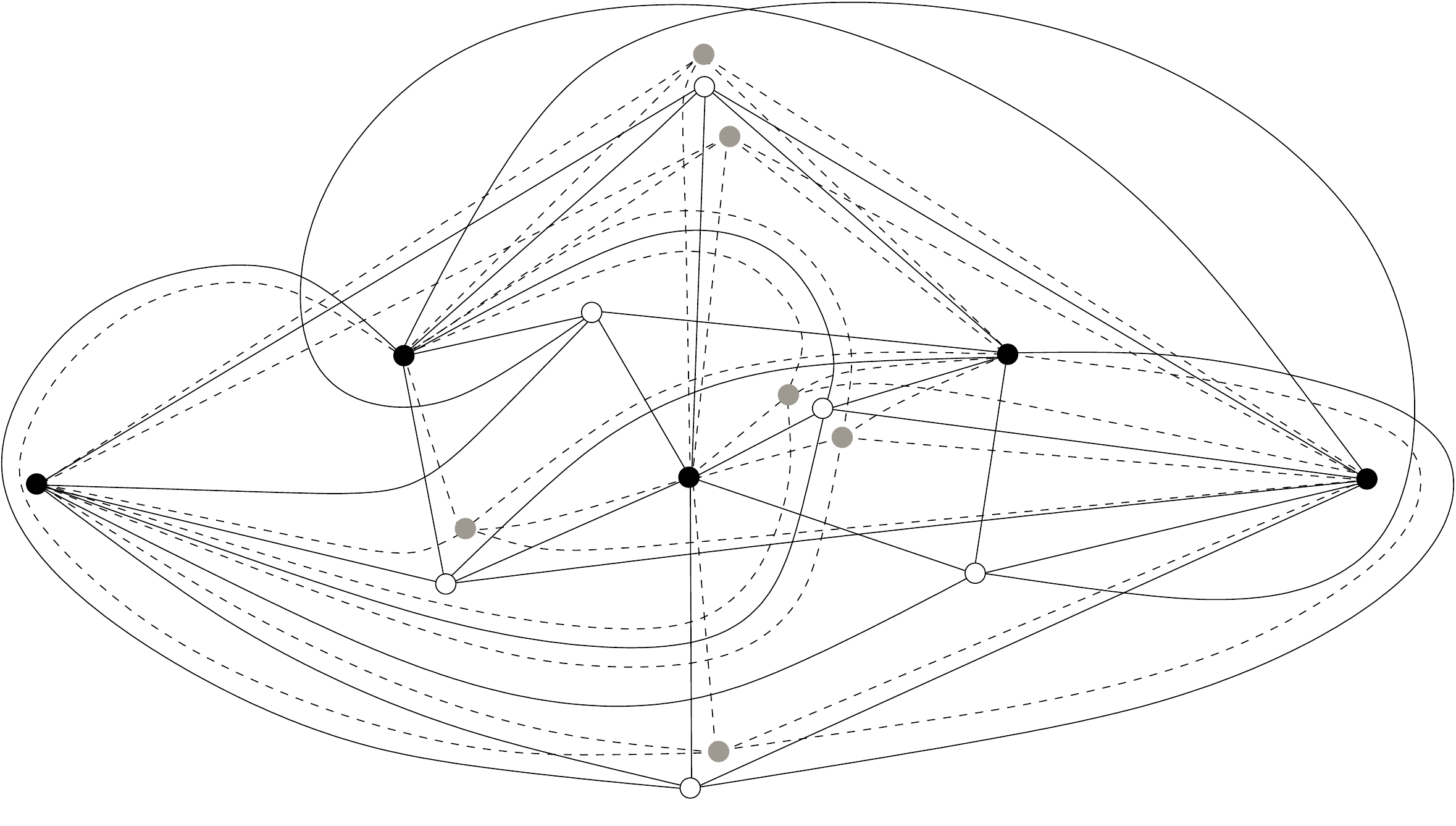_t}}
		 \caption{\small{The antipodal-free drawing
                     $D_{2,1}$. To obtain this optimal drawing of
                     $K_{5,12}=K_{5,4(2+1)}$, we start with the drawing in
                    Figure~\ref{fig:k5601} and add two vertices very
                    close to $a_0$, two vertices very close to $a_3$,
                    one vertex very close to $a_1$, and one vertex
                    very close to $a_2$. Since $s-1=0$, no vertices are
                    added very close to either 
                    $a_4$ or $a_5$. The added vertices are colored gray in this drawing.}}
		 \label{fig:rho}
	\end{center}
	\end{figure} 

\medskip
\noindent{\bf Claim. }{\em
For every pair $r,s$ of nonnegative integers,  $D_{r,s}$ is an antipodal-free optimal drawing of
$K_{5,4(r+s)}$.}
\medskip

\begin{proof}
First we note that since $D^*$ is antipodal-free, it follows immediately that $D_{r,s}$
is also antipodal-free. Thus we only need to prove optimality. 

An elementary calculation gives the number of crossings in
$D_{r,s}$. For instance, take a vertex $u$ in $[a_0]$. Now $\cru_{D_{r,s}}(u,v)$ equals
(i) $4$ if $v \in [a_0], v\neq u$;
(ii) $1$ if $v\in [a_1]$;
(iii) $2$ if $v\in [a_2]$;
(iv) $1$ if $v\in [a_3]$;
(v) $1$ if $v\in [a_4]$; and
(vi) $2$ if $v\in [a_5]$.
Since $|[a_0]|= r+s$, 
$|[a_1]|= r$, 
$|[a_2]|= r$, 
$|[a_3]|= r+s$, 
$|[a_4]|= s$, and
$|[a_5]|= s$, it follows that $\ucr{D_{r,s}}{u}= 4(r+s-1) + r + 2r +
(r+s) + s + 2s = 
4(2r+2s-1)$. 

A totally analogous argument shows that, actually, 
$\ucr{D_{r,s}}{w}=4(2r+2s-1)$ for {\em every} white vertex $w$. 
Since there are $4(r+s)$ white vertices in total,  it
follows that $\cru(D_{r,s}) = (1/2)\bigl(4(r+s)\bigr)\bigl(4(2r+2s-1)\bigr) = \bigl(4(r+s)\bigr)\bigl(4(r+s)-2\bigr) =
Z(5,4(r+s))$. 
\end{proof}

\section{Main results: the optimal drawings of $K_{5,n}$, for $n$ even.}\label{sec:main1}

We now state our main results.


\begin{theorem}\label{thm:main1}
Let $n$ be a positive even integer. 
\begin{enumerate}
\item\label{it:ma1} If {$n\equiv 2$ \text{\rm (mod} $4$\text{\rm )}}, then all optimal drawings of $K_{5,n}$ have
antipodal vertices. 
\item\label{it:ma2} 
If {$n\equiv 0$ \text{\rm (mod} $4$\text{\rm )}}, then every
antipodal-free optimal drawing of $K_{5,n}$ is isomorphic to
$D_{r,s}$ (described in Section~\ref{sec:spdr}) for some
integers $r,s$ such that $4(r+s)=n$.
\end{enumerate}
\end{theorem}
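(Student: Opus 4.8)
The plan is to reduce Theorem~\ref{thm:main1} to a purely combinatorial statement about rotation multisets, following the chain of Sections~\ref{sec:clean}--\ref{sec:thecores}, and then read off both conclusions from the classification of cores. The starting point I would establish is an exact \emph{responsibility} identity. Since $\cru(K_{5,n})=Z(5,n)=n(n-2)$ for even $n$ by Kleitman's theorem, deleting any white vertex $w$ from an optimal drawing $D$ leaves a drawing of $K_{5,n-1}$, so $\cru(D-w)\ge Z(5,n-1)=(n-2)^2$ and hence $\ucr{D}{w}=\cru(D)-\cru(D-w)\le 2n-4$. Because every crossing in a good drawing lies between two distinct stars, $\sum_{w}\ucr{D}{w}=2\cru(D)=2n(n-2)$, so the average of $\ucr{D}{w}$ over the $n$ white vertices is exactly $2n-4$; together with the upper bound this forces $\ucr{D}{w}=2n-4$ for \emph{every} white vertex. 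This rigidity is what drives the rest of the argument.

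Next I would pass from the drawing to its rotations. Two white vertices are antipodal precisely when their rotations are reverses of one another, so an antipodal-free drawing is one whose rotation set contains no reverse pair (and no self-reverse rotation of multiplicity $\ge 2$). For any two rotations $\pi,\sigma$ there is a minimum number $f(\pi,\sigma)$ of crossings that two stars with these rotations must incur in a good drawing, with $f(\pi,\sigma)=0$ exactly in the antipodal case, and always $\ucr{D}{a_i,a_j}\ge f(\rot{D}{a_i},\rot{D}{a_j})$. Grouping the white vertices into bags of equal rotation (the clean/bag reduction of Section~\ref{sec:clean}) and writing $n_\sigma$ for the number of vertices of rotation $\sigma$, the responsibility identity becomes, for each present rotation $\pi$,
\[
\sum_{\sigma\neq\pi} f(\pi,\sigma)\,n_\sigma \;+\; f(\pi,\pi)\,(n_\pi-1) \;=\; 2n-4 .
\]
A feasible antipodal-free rotation multiset is therefore one for which these lower bounds can be met simultaneously at the value $2n-4$ for every present $\pi$; in that case equality is forced, every pairwise count equals its $f$-value, and the drawing is determined up to isomorphism by its multiset of rotations. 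This is exactly the information packaged by the key and its core.

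The heart of the proof, and the step I expect to be the \textbf{main obstacle}, is the classification of which antipodal-free rotation multisets satisfy the displayed system with right-hand side $2n-4$. This is where the bulk of the work lives (Sections~\ref{sec:someprco}--\ref{sec:thecores}): one must understand the $24$ rotations, tabulate the small values of $f$, and eliminate all surviving configurations but two. The elimination should be genuinely case-heavy, because the equations are extremely restrictive: antipodal-freeness gives $f(\pi,\sigma)\ge 1$ on every present pair while the diagonal weight $f(\pi,\pi)$ is comparatively large, so only a handful of rotations can carry positive multiplicity together. The target output of this analysis is the core characterization: the core of every antipodal-free optimal drawing of $K_{5,n}$ is isomorphic either to the $4$-cycle or to $\Bur$.

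Granting the core characterization, both parts follow quickly (Section~\ref{sec:proofmain}). When the core is $\Bur$, the six surviving rotations are, up to relabelling, those of the drawing $D^*$ of Section~\ref{sec:spdr}, and solving the now-determined linear system pins the bag sizes to $n_0=n_3=r+s$, $n_1=n_2=r$, $n_4=n_5=s$ for nonnegative integers $r,s$; when the core is the $4$-cycle one obtains four equal bags, which is the degenerate case $s=0$ (or $r=0$). In every case the multiset of rotations coincides with that of $D_{r,s}$, and since isomorphism of drawings is by definition equivalence of rotation multisets (Section~\ref{sec:rots}), the drawing is isomorphic to $D_{r,s}$, with $n=4(r+s)$. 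This is statement~(ii). Statement~(i) then follows by contraposition: since every antipodal-free optimal drawing forces $n\equiv 0\pmod 4$, no such drawing can exist when $n\equiv 2\pmod 4$, and hence every optimal drawing of $K_{5,n}$ in that case must contain an antipodal pair.
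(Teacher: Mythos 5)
Your overall strategy is the same as the paper's (responsibility identity, passage to rotation multisets, a linear system per rotation, classification of the surviving configurations, then reading off both parts), and your derivation of $\ucr{D}{w}=2n-4$ for every white vertex matches Proposition~\ref{pro:tempev}. But the proposal has a genuine gap: the entire classification step --- showing that the core of an antipodal-free optimal drawing is a $4$-cycle or $\Bur$, and that the surviving rotations are, up to relabelling, exactly those of $D^*$ --- is announced as ``the main obstacle'' and then simply granted. That classification \emph{is} the theorem; it occupies Sections~\ref{sec:someprco} through~\ref{sec:thecores} of the paper and requires the antiroute machinery of Lemmas~\ref{lem:wo} and~\ref{lem:woo} (to forbid the configurations of Figures~\ref{fig:sk13} and~\ref{fig:chim}), the structural propositions on cores (bipartite, connected, maximum degree $3$, minimum degree $2$, girth $4$, at most $7$ vertices), the separate argument of Proposition~\ref{pro:r} that antipodal-free forces $4$-free, and finally Propositions~\ref{pro:he} and~\ref{pro:wasc} to pin down the actual cyclic permutations. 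None of this is sketched, so the proposal is an outline of the paper's strategy rather than a proof.

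Two points in your setup are also incorrect as stated. First, antipodality of $a_i,a_j$ in a given drawing is \emph{not} equivalent to their rotations being reverses: Lemma~\ref{lem:wo} gives only the implication that $\ucr{D}{a_i,a_j}=0$ forces reverse rotations; two vertices with reverse rotations can still cross (an even number of times), so an antipodal-free drawing may well contain a reverse pair. Second, and more consequentially, your ``responsibility identity'' is written with the theoretical minima $f(\pi,\sigma)$, whereas the true identity $\sum_{\sigma}\ucr{D}{\cdot,\cdot}=2n-4$ involves the \emph{actual} pairwise crossing numbers of the drawing; these satisfy $\ucr{D}{a_i,a_j}\ge f(\pi,\sigma)$ but there is no a priori reason equality holds pair by pair, and your claim that ``equality is forced, every pairwise count equals its $f$-value'' does not follow from the average argument. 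This is precisely why the paper builds the linear system $\LL(\Key{D})$ from the actual edge labels $\lambda_{ij}$ of the key (well defined only after the cleaning step of Proposition~\ref{pro:same}) and then must work to constrain which label patterns are realizable, rather than starting from a table of minima.
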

	\begin{figure}[h!]
	\begin{center}
		\scalebox{0.5}{\input{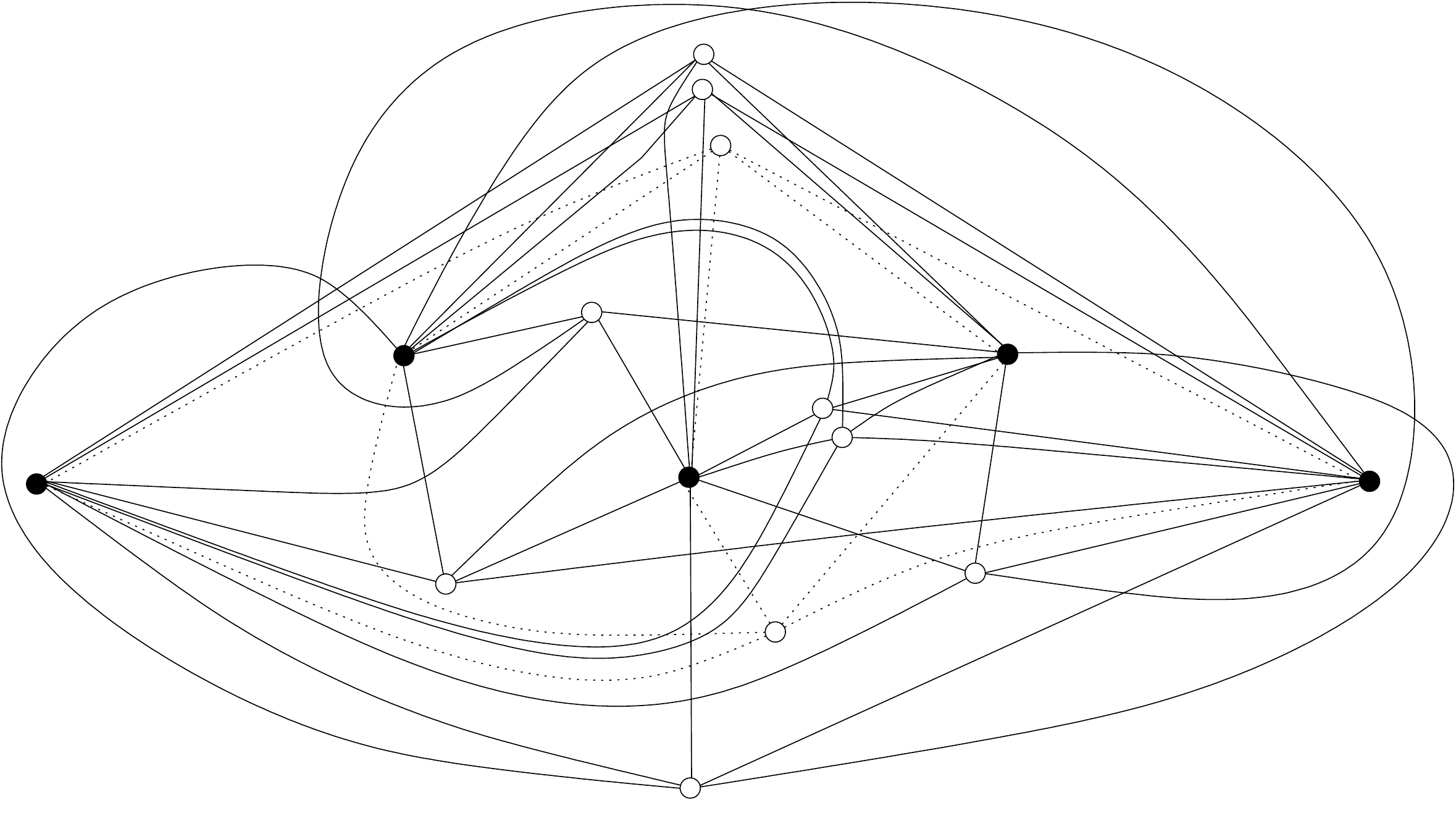_t}}
                \caption{\small{An optimal drawing of $K_{5,10}$ that
                    is neither a Zarankiewicz drawing nor the
                    superimposition of Zarankiewicz drawings.  As
                    predicted by Theorem~\ref{thm:main2}, this is the
                    superimposition of a Zarankiewicz drawing (the $K_{5,2}$ induced by $a_8, a_9$ and the five
                    black vertices) plus a drawing $D_{r,s}$ (namely
                    with $r=s=1$).}}
		 \label{fig:anti}
	\end{center}
	\end{figure}

Before moving on to the proof of Theorem~\ref{thm:main1} (the rest of the
paper is devoted to this proof), we will show that it implies a decomposition of all the
optimal drawings of $K_{5,n}$, for $n$ even.

In Section~\ref{sec:intro} we defined, somewhat informally, a
Zarankiewicz drawing. Let us now formally define these drawings using
rotations (we focus on $K_{5,n}$, although the definition is obviously
extended to $K_{m,n}$ for any $m$). 
For a nonnegative integer $n$, a drawing $D$ of $K_{5,n}$ is a {\em
  Zarankiewicz drawing} if the white vertices can be partitioned into
two sets, of sizes $\floor{n/2}$ and $\ceil{n/2}$, so that vertices in different
sets are antipodal in $D$, and vertices $a_i, a_j$ in the same set
satisfy $\ucr{D}{a_i,a_j}=4$ (see Figure~\ref{fig:kmnzar} for a
Zarankiewicz drawing of $K_{5,6}$).   A quick calculation shows
that every Zarankiewicz drawing of $K_{5,n}$ is an optimal drawing.

\begin{theorem}[Decomposition of optimal drawings of $K_{5,n}$, for
    $n$ even]
\label{thm:main2}
Let $D$ be an optimal drawing of $K_{5,n}$, with $n$ even. Then
the set of $n$ white vertices can be
  partitioned into two sets $A,B$ (one of which may be empty), with
  $|A|=4t$ for some nonnegative integer $t$, such that:
 (i) the vertices in $B$ can be decomposed into $|B|/2$
  antipodal pairs;  and
(ii) the drawing of $K_{5,4t}$ induced by $A$ is antipodal-free, and
  it is isomorphic to the drawing $D_{r,s}$ described in
  Section~\ref{sec:spdr}, for some integers  $r,s$ such that $r+s=t$.
Equivalently, either $D$ is the superimposition of Zarankiewicz
drawings, or it can be obtained by superimposing Zarankiewicz
drawings to the drawing $D_{r,s}$ described in Section~\ref{sec:spdr}, for
some integers  $r,s$ (see Figure~\ref{fig:anti}). 
\end{theorem}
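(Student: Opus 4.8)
The plan is to deduce Theorem~\ref{thm:main2} from Theorem~\ref{thm:main1} by peeling antipodal pairs off an optimal drawing, one at a time, in such a way that optimality is preserved; when no antipodal pair remains, what is left is an antipodal-free optimal drawing, which Theorem~\ref{thm:main1} identifies with some $D_{r,s}$. The linchpin is a uniform bound on responsibilities. Let $D$ be any optimal drawing of $K_{5,n}$ with $n$ even. Deleting a white vertex $w$ leaves a drawing of $K_{5,n-1}$, so Kleitman's theorem gives $\cru(D)-\ucr{D}{w}\ge Z(5,n-1)$, whence $\ucr{D}{w}\le Z(5,n)-Z(5,n-1)=2(n-2)$ (using $Z(5,n)=n(n-2)$ and $Z(5,n-1)=(n-2)^2$ for even $n$). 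Since $\sum_{w}\ucr{D}{w}=2\cru(D)=2Z(5,n)=2n(n-2)$ (a crossing of $S(a_i)$ and $S(a_j)$ contributes to both $\ucr{D}{a_i}$ and $\ucr{D}{a_j}$), the average of the $n$ responsibilities already equals this upper bound, so in fact $\ucr{D}{w}=2(n-2)$ for every white vertex $w$.

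Next I would record the peeling step: if $D$ is optimal on $K_{5,n}$ with $n$ even and $a_i,a_j$ are antipodal, then deleting $a_i,a_j$ yields an optimal drawing $D'$ of $K_{5,n-2}$. Indeed the crossings destroyed number $\ucr{D}{a_i}+\ucr{D}{a_j}-\ucr{D}{a_i,a_j}=2(n-2)+2(n-2)-0=4(n-2)$, which is exactly $Z(5,n)-Z(5,n-2)$; hence $\cru(D')=Z(5,n-2)$, and $D'$ is optimal by Kleitman's lower bound. The point to stress is that whether two surviving vertices are antipodal depends only on the $K_{5,2}$ they induce, so deletions never create or destroy antipodality among the vertices that remain.

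The proof itself is then an iteration. Beginning with $D$, I repeatedly delete an antipodal pair as long as one exists, collecting the deleted vertices in a set $B$; by the peeling step every intermediate drawing is optimal with an even number of white vertices, so the process is legitimate at each stage and halts at an antipodal-free optimal drawing $D_A$ of $K_{5,n'}$ on the surviving set $A$, with $n'$ even and $B$ partitioned into $|B|/2$ antipodal pairs. Because $D_A$ is antipodal-free and optimal, Theorem~\ref{thm:main1}(\ref{it:ma1}) rules out $n'\equiv 2\pmod 4$, so $n'\equiv 0\pmod 4$; writing $n'=4t$, Theorem~\ref{thm:main1}(\ref{it:ma2}) gives $D_A\cong D_{r,s}$ for integers $r,s$ with $r+s=t$. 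This is precisely the partition asserted in (i)--(ii).

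Finally I would restate this in the superimposition language. Each antipodal pair in $B$ is, by definition, a Zarankiewicz drawing of $K_{5,2}$, so the sub-drawing induced by $B$ is a superimposition of Zarankiewicz drawings; superimposing it onto the sub-drawing induced by $A$ recovers $D$, and that sub-drawing is either empty (the first alternative, so $D$ is itself a superimposition of Zarankiewicz drawings) or isomorphic to $D_{r,s}$ (the second alternative). Since all the real work is carried by Theorem~\ref{thm:main1}, I do not expect a genuine obstacle here; the only steps that need care are the two identities $Z(5,n)-Z(5,n-1)=2(n-2)$ and $Z(5,n)-Z(5,n-2)=4(n-2)$, which are what force the responsibility bound to be tight and make each peeling step remove exactly the right number of crossings, thereby keeping optimality intact throughout the iteration.
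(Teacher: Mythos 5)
Your proof is correct, and its overall architecture coincides with the paper's: strip antipodal pairs until none remain, then invoke Theorem~\ref{thm:main1} on the antipodal-free remainder (the paper phrases the stripping as an induction on $n$ rather than an explicit iteration, but that difference is cosmetic). The one place where you genuinely diverge is the justification of the peeling step. The paper shows that deleting an antipodal pair $a_i,a_j$ destroys at least $4(n-2)$ crossings by observing that for each of the other $n-2$ white vertices $a_k$ the triple $a_i,a_j,a_k$ induces a drawing of $K_{5,3}$ with at least $Z(5,3)=4$ crossings, none of which lie between $S(a_i)$ and $S(a_j)$; hence $\ucr{D}{a_i,a_k}+\ucr{D}{a_j,a_k}\ge 4$, so $\cru(D')\le Z(5,n)-4(n-2)=Z(5,n-2)$, with equality forced by Kleitman's theorem. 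You instead first prove that every white vertex has responsibility exactly $2n-4$ (an upper bound from deleting one vertex, plus an averaging argument), and then compute the number of destroyed crossings exactly as $2(2n-4)-0=4(n-2)$. Your responsibility lemma is precisely the first paragraph of the paper's proof of Proposition~\ref{pro:tempev}, so you have not gone outside the paper's toolkit; you have simply moved that fact forward and used it in place of the local $K_{5,3}$ bound (Proposition~\ref{pro:at4} in the paper). Both routes lean equally on Kleitman's theorem as the lower bound for the smaller drawing, and both are equally elementary. The only very minor loose end in your write-up is the degenerate terminal case $n'=0$: Theorem~\ref{thm:main1} is stated for positive $n$, so when the iteration consumes every vertex you should note that $A=\emptyset$ with $t=r=s=0$ satisfies the conclusion directly rather than via Theorem~\ref{thm:main1}(\ref{it:ma2}).
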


\begin{proof}
We proceed by induction on $n$. 
It is trivial to check that the two
white vertices of every
optimal drawing of $K_{5,2}$ are an antipodal pair, and so the
statement holds in the base case $n=2$. For the inductive step, we consider an even
integer $n$, and assume that the
statement is true for all $k < n$. 

Let $D$ be an optimal drawing of $K_{5,n}$.
If $D$ has no antipodal pairs, then the statement follows
immediately from Theorem~\ref{thm:main1} (without even using the
induction hypothesis). Thus we may assume that $D$
has at least one antipodal pair $a_i, a_j$.
It suffices to show that the drawing $D'$ that results by removing
$a_i$ and $a_j$ from $D$
is an optimal drawing of $K_{5,n-2}$, as then
the result follows by the induction hypothesis.  Clearly $\cru(D) =
\cru(D') + \sum_{k\in {\tred{[n]}}-\{i,j\}} (\ucr{D}{a_i,a_k}+\ucr{D}{a_j,a_k})
\ge \cru(D') + (n-2)Z(5,3) = \cru(D')+4n-8$.  Thus $\cru(D') \le
\cru(D) - 4n+8 = Z(5,n) - 4n+8$. An elementary calculation shows that
$Z(5,n) -4n+8=Z(5,n-2)$, so we obtain $\cru(D') \le Z(5,n-2)$. Since
$\cru(K_{5,n-2}) = Z(5,n-2)$, it follows that $\cru(D') =Z(5,n-2)$,
that is, $D'$ is an optimal drawing of $K_{5,n-2}$. 
\end{proof}


\section[Clean drawings]{Clean
  drawings.}\label{sec:clean}

A good drawing of $K_{5,n}$ is {\em clean} if:
\begin{enumerate} 
\item\label{pro:pfi} for all distinct white vertices $a_i,a_j$ such that
$\rot{D}{a_i} = \rot{D}{a_j}$, we have 
 $\ucr{D}{a_i,a_j} = 4$;
\item\label{pro:pfu} for all distinct white vertices 
$a_i, a_j, a_k, a_\ell$ such that
$\rot{D}{a_i} = \rot{D}{a_j}$ and $\rot{D}{a_k} =
  \rot{D}{a_\ell}$, we have $\ucr{D}{a_i,a_k} = \ucr{D}{a_j,a_\ell}$;
  and
\item\label{pro:pfo} for any distinct white
vertices $a_i, a_k$, 
$\ucr{D}{a_i,a_k} \le 4$.
\end{enumerate}

\begin{proposition}\label{pro:same}
Let $D$ be an optimal drawing of $K_{5,n}$. Then
there is an optimal drawing $D'$, isomorphic to $D$, that is clean.
\end{proposition}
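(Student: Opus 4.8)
The plan is to exhibit $D'$ by rerouting the white vertices of $D$ that share a rotation so that they become parallel copies of one another, and then to argue that the resulting drawing already satisfies all three clean conditions. The only freedom I use is a rerouting that leaves every rotation $\rot{D}{a_i}$ unchanged, so the drawing produced is automatically isomorphic to $D$; the content is entirely in controlling the number of crossings.

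First I would record two preliminary facts. The operation at the heart of the argument is the \emph{parallel copy}: if $\rot{D}{a_i}=\rot{D}{a_j}$, one may reroute the star $S(a_j)$ inside a thin tube following $S(a_i)$, obtaining a drawing $\widetilde D$ with $\rot{\widetilde D}{a_k}=\rot{D}{a_k}$ for all $k$, with $\ucr{\widetilde D}{a_j,a_k}=\ucr{D}{a_i,a_k}$ for every $k\neq i,j$, and (as in the Claim of Section~\ref{sec:spdr}) with $\ucr{\widetilde D}{a_i,a_j}=4$. Simple bookkeeping then gives the change in the total number of crossings,
\[
\cru(\widetilde D)-\cru(D)=\bigl(\ucr{D}{a_i}-\ucr{D}{a_j}\bigr)+\bigl(4-\ucr{D}{a_i,a_j}\bigr).
\]
The second fact is the key combinatorial lemma on $K_{5,2}$: in every good drawing of $K_{5,2}$ whose two white vertices have equal rotation, the two stars cross at least $4$ times.

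With these in hand, the main step runs as follows. Fix $a_i,a_j$ with $\rot{D}{a_i}=\rot{D}{a_j}$ and form both parallel copies, $a_j$ along $a_i$ and $a_i$ along $a_j$; writing $\Delta,\Delta'$ for the two corresponding values of $\cru(\widetilde D)-\cru(D)$, optimality of $D$ forces $\Delta\ge 0$ and $\Delta'\ge 0$. Since $\Delta+\Delta'=2\bigl(4-\ucr{D}{a_i,a_j}\bigr)$, we obtain $\ucr{D}{a_i,a_j}\le 4$; combined with the $K_{5,2}$ lemma this yields $\ucr{D}{a_i,a_j}=4$ and hence $\Delta=\Delta'=0$. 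Thus each parallel copy preserves optimality (and, as a byproduct, $\ucr{D}{a_i}=\ucr{D}{a_j}$ whenever the two rotations agree). Performing a parallel copy of every white vertex onto a chosen representative of its rotation class therefore produces an optimal drawing $D'$, isomorphic to $D$, in which any two vertices of the same rotation cross exactly $4$ times (the first clean condition), and in which $\ucr{D'}{a_i,a_k}$ depends only on the two rotations involved (the second clean condition, since every vertex now crosses $S(a_k)$ exactly as its representative does).

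It remains to establish the third clean condition, that no two white vertices of $D'$ cross more than $4$ times, and this is the step I expect to be the main obstacle. For a same-rotation pair it is already settled above; the difficulty is cross-class pairs, where one must rule out two stars with distinct rotations crossing five or more times in an optimal drawing. I would attack this through a complete combinatorial classification, for good drawings of $K_{5,2}$, of the number of crossings between two stars as a function of their rotations---showing in particular that the minimum over drawings never exceeds $4$---and then use optimality of $D'$ (via the same parallel-copy bookkeeping, now comparing $D'$ against rerouted competitors) to force each cross-class count down to that minimum. The genuinely delicate point, and the crux of the whole proposition, is precisely this rotation-to-crossing analysis of $K_{5,2}$: both the lower bound of $4$ for equal rotations and the uniform upper bound of $4$ across all pairs of rotations.
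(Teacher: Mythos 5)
Your treatment of conditions (\ref{pro:pfi}) and (\ref{pro:pfu}) is correct, and in one respect sharper than the paper's: the paper collapses each rotation class onto a representative chosen to minimize the cross-class crossing count, whereas your two-sided comparison $\Delta+\Delta'=2\bigl(4-\ucr{D}{a_i,a_j}\bigr)\ge 0$, combined with the lower bound of $4$ for equal rotations, additionally yields $\ucr{D}{a_i}=\ucr{D}{a_j}$ whenever the rotations agree, so that any representative works and each collapse preserves optimality. Both the bookkeeping identity and the equal-rotation lower bound are sound.

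The gap is in condition (\ref{pro:pfo}), and the plan you sketch would not close it. A classification of the minimum crossing number of $K_{5,2}$ as a function of the two rotations is neither needed nor usable here: a competitor obtained by rerouting $S(a_k)$ to realize that minimum against $S(a_i)$ loses all control over the crossings of $S(a_k)$ with the other $n-2$ stars, so the parallel-copy bookkeeping does not apply to it; and the target itself is false. For instance, in the optimal drawings $D_{r,s}$ (with $r,s\ge 1$) the rotations $(04231)$ and $(02314)$ are realized by stars crossing $3$ times, although a good drawing of $K_{5,2}$ with these rotations and a single crossing exists, since applying the transposition $(12)$ to $(04231)$ gives the reverse of $(02314)$. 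What actually settles condition (\ref{pro:pfo}) --- and is the paper's argument --- is that your parallel-copy move applies verbatim to a pair $a_i,a_k$ with \emph{distinct} rotations: collapsing $a_k$ onto $a_i$ changes the rotation of $a_k$, which is harmless because this competitor is used only to contradict optimality, not to serve as $D'$. Taking without loss of generality $\ucr{D'}{a_i}\le\ucr{D'}{a_k}$, the change in the number of crossings is $\ucr{D'}{a_i}-\ucr{D'}{a_k}+4-\ucr{D'}{a_i,a_k}$, which is negative as soon as $\ucr{D'}{a_i,a_k}\ge 5$. So the bound $\ucr{D'}{a_i,a_k}\le 4$ follows from the machinery you already built, with no analysis of $K_{5,2}$ rotation pairs at all.
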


\begin{proof}
For each white vertex $a_i$, define $d_i:=\sum_{\{a_\ell\,|\, \rot{D}{a_\ell}\neq
   \rot{D}{a_i}\}}
\ucr{D}{a_i,a_\ell}$. 
 Let $\pi\in\Rot{D}$. Take a white vertex 
$a_i$ with $\rot{D}{a_i}=\pi$,  such that for all
$j$ with \tred{$\rot{D}{a_j}= \pi$} we have $d_i \le d_j$. It is easy to see
that we can move 
every vertex $a_j$ with \tred{$\rot{D}{a_j}= \pi$} very close to $a_i$, so that
$\ucr{D}{a_i,a_k} = \ucr{D}{a_j,a_k}$ for every white vertex
$a_k\notin\{a_i,a_j\}$, and so that $\ucr{D}{a_i,a_j}=4$. If we
perform this
procedure for every rotation in $\Rot{D}$, the result is an optimal drawing
$D'$, isomorphic to $D$, that satisfies (\ref{pro:pfi}) and \eqref{pro:pfu}.

Now to prove that $D'$ also satisfies (\ref{pro:pfo}) we suppose, by way of contradiction, that 
there exist $a_i, a_k$ such that
$\ucr{D}{a_i,a_k} > 4$. Define $d_i, d_k$ as in the previous
paragraph. We may assume without loss of generality that 
$d_i \le d_k$. Now let $D''$ be the
drawing that results from moving $a_k$ very close to $a_i$, making it
have the same rotation as $a_i$, and so that $\ucr{D''}{a_i,a_\ell} =
\ucr{D''}{a_k,a_\ell}$ for every $\ell\not\in\{i,k\}$, and $\ucr{D''}{a_i,a_k}
  = 4$. It is readily checked that $D''$ has fewer crossings than $D'$,
  contradicting the optimality of $D'$.
\end{proof}

\begin{remark}\label{rem:clean}
We are interested in classifying optimal drawings up to
isomorphism (Theorem~\ref{thm:main1}). In view of Proposition~\ref{pro:same}, we may assume 
that all drawings of $K_{5,n}$ under
consideration are clean.
We will work under this assumption for the rest of the paper.
\end{remark}

\section[The key of a clean drawing]{The key  of a clean drawing.}\label{sec:keyscores}



We now associate to every clean drawing of $K_{5,n}$ an edge-labeled
graph that (as we will see) captures all its relevant crossing number
information.

Let $D$ be a clean drawing of $K_{5,n}$.  The {\em key} $\Key{D}$
of $D$ is the (edge-labeled) complete graph whose vertices are the
elements of $\Rot{D}$, and where each edge is labeled according to the
following rule: if $\pi,\pi'\in \Rot{D}$, with $\rot{D}{a_i}=\pi$ and
$\rot{D}{a_j}=\pi'$, then the label of the edge joining $\pi$ and
$\pi'$ is $\ucr{D}{a_i,a_j}$.  It follows from the cleanness of $D$ 
that $\ucr{D}{a_i,a_j}$ does not depend on the
choice of $a_i$ and $a_j$, and so $\Key{D}$ is well-defined for every
clean drawing $D$.  Moreover, it also follows that every edge label
in $\Key{D}$ is in $\{0,1,2,3,4\}$. 
The {\em core} of $D$ is the subgraph
$\PKey{D}$ of $\Key{D}$ that consists of all the vertices of $\Key{D}$
and the edges of $\Key{D}$ with label $1$. 
In Figure~\ref{fig:key} we give a (clean and optimal) drawing $D$ of $K_{5,3}$, and
illustrate its key and its core.

Our main interest is in
antipodal-free drawings, that is, those drawings in which every edge
label in $\Key{D}$ is in $\{1,2,3,4\}$.
A key is {\em $0$--free} (respectively, $4$-{\em free}) if none of
its edges has $0$ (respectively, $4$) as
a label. A key is {\em $\{0,4\}$-free} if it is both $0$- and
$4$-free.

	\begin{figure}[h!]
	\begin{center}
		\scalebox{0.7}{\input{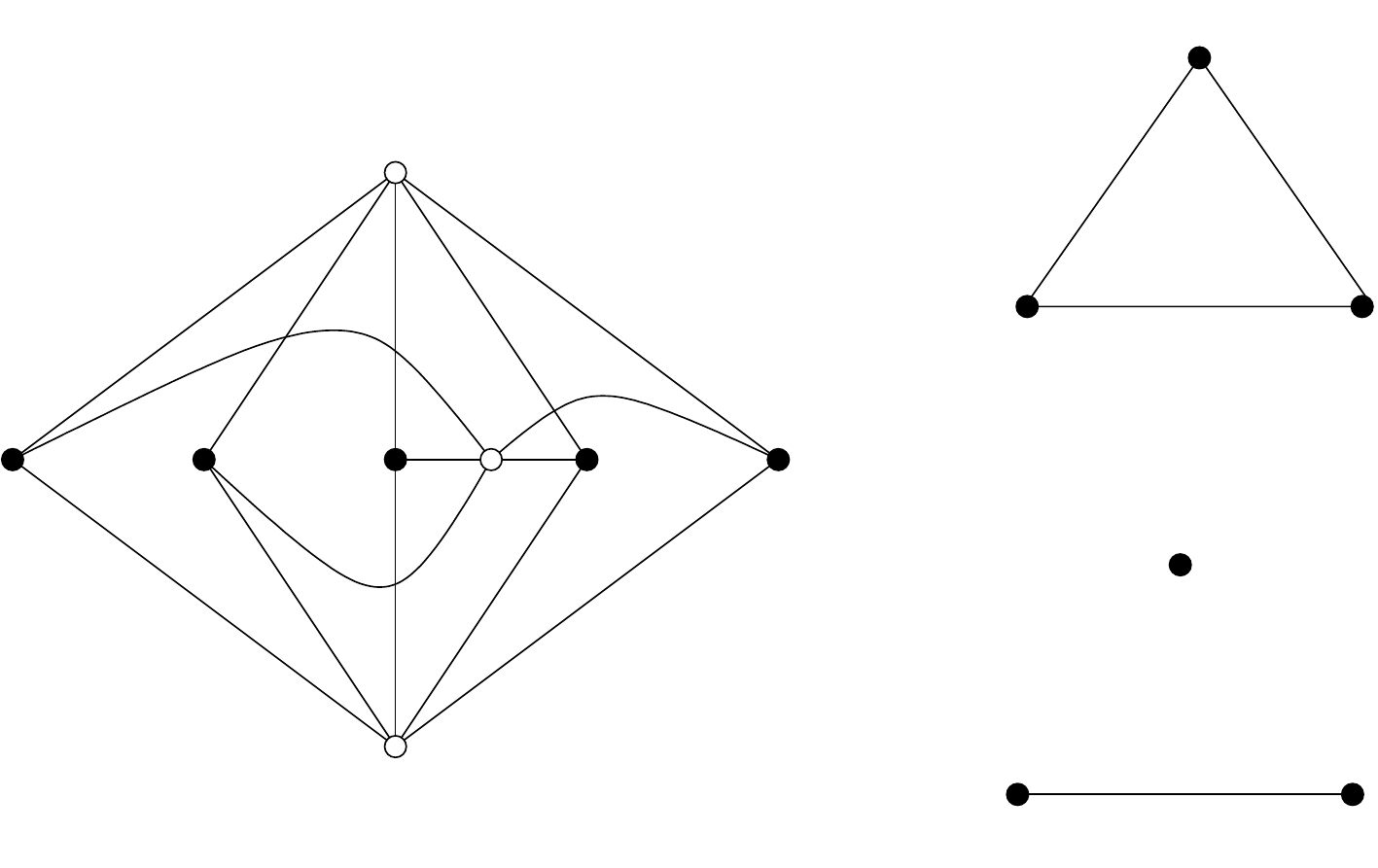_t}}
		 \caption{\small{A drawing $D$ of
                     $K_{5,3}$. By letting $\rot{D}{a_0}=\pi_0, \rot{D}{a_1}=\pi_1$, and $\rot{D}{a_2}=\pi_2$,
                     we obtain the key $\Key{D}$ (right, above) and
                     the core $\PKey{D}$ (right, below) of $D$.}}
		 \label{fig:key}
	\end{center}
	\end{figure}




The main step in our strategy to understand optimal drawings is to
characterize which labelled graphs are the key of some optimal
drawing. To this end, we introduce a system of linear equations
associated to each key, as follows.

\begin{definition}[The system of linear equations of a key]\label{def:linsys}
Let $D$ be an optimal drawing of $K_{5,n}$, with $n$ even.  Let the
vertices of $\Key{D}$ (that is, the elements of $\Rot{D}$) be labelled $\pi_0, \pi_1, \ldots, \pi_{m-1}$, and
let $\lambda_{ij}$ denote the label of the edge $\pi_i \pi_j$,
for all $i \neq j$. For each $i\in {\tred{[m]}}$, the {\em linear equation
  $E(\pi_i,\Key{D})$ for
  $\pi_i$ in $\Key{D}$} is the linear equation on the variables $t_0,
t_1, \ldots, t_{m-1}$ given by
\[
E(\pi_i,\Key{D})\hbox{\hglue 0.3 cm} : \hbox{\hglue 0.3 cm}
2t_i + \sum_{j\in {\tred{[m]}},\,j\neq i} (\lambda_{ij} -2 ) t_j = 0.
\]

The set $\{E(\pi_i,\Key{D})\}_{i\in {\tred{[m]}}}$ is the {\em system of linear
  equations associated to} $\Key{D}$, and is denoted $\LL(\Key{D})$.
\end{definition}

The characterization of when a labelled graph is the key of an
optimal drawing is mainly based on the following crucial fact.

\begin{proposition}\label{pro:tempev}
Let $D$ be an optimal drawing of $K_{5,n}$, with $n$ even. Then
the system of linear equations $\LL(\Key{D})$ associated to $\Key{D}$
has a positive integral solution $(t_0,t_1,\ldots,t_{m-1})$ such that
$t_0 + t_1 + \cdots + t_{m-1} = n$. 
\end{proposition}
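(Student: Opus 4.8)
The plan is to read a solution directly off the drawing. For each $i\in[m]$, let $t_i$ be the number of white vertices $a_\ell$ with $\rot{D}{a_\ell}=\pi_i$. Since each $\pi_i$ lies in $\Rot{D}$, it is realized by at least one white vertex, so every $t_i$ is a positive integer; and since the rotations $\pi_0,\dots,\pi_{m-1}$ partition the $n$ white vertices, $t_0+\cdots+t_{m-1}=n$. It therefore suffices to verify that $(t_0,\dots,t_{m-1})$ satisfies each equation $E(\pi_i,\Key{D})$. I expect the single substantial ingredient to be the claim that in an optimal drawing every white star carries the \emph{same} number of crossings; everything after that is bookkeeping that rests on the cleanness of $D$ (Remark~\ref{rem:clean}).

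First I would establish the uniformity claim: $\ucr{D}{w}=2(n-2)$ for every white vertex $w$. For the upper bound, delete $w$ from $D$. Since a good drawing has no crossing between two edges of the same star, the crossings lost are exactly those counted by $\ucr{D}{w}$, and what remains is a drawing of $K_{5,n-1}$; hence $\cru(D)-\ucr{D}{w}\ge \cru(K_{5,n-1})=Z(5,n-1)$ by Kleitman's theorem~\cite{kleitman}, so $\ucr{D}{w}\le Z(5,n)-Z(5,n-1)$. A short computation (using that $n$ is even) gives $Z(5,n)-Z(5,n-1)=n(n-2)-(n-2)^2=2(n-2)$. For the matching lower bound I would average: again because every crossing lies between edges of two \emph{distinct} stars, $\sum_{w}\ucr{D}{w}=2\cru(D)=2Z(5,n)=n\cdot 2(n-2)$. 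This is a sum of $n$ terms, each at most $2(n-2)$, whose total is $n\cdot 2(n-2)$; hence each term equals $2(n-2)$, as claimed. This averaging step, together with Kleitman's theorem supplying the $(n-1)$-vertex bound, is the real content of the proof; the remaining manipulations are forced.

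Finally I would expand $\ucr{D}{w}$ for a vertex $w$ with $\rot{D}{w}=\pi_i$, grouping the other white vertices by rotation. By cleanness, $w$ crosses each of the $t_i-1$ other vertices of rotation $\pi_i$ exactly $4$ times (property~\ref{pro:pfi}), and each of the $t_j$ vertices of rotation $\pi_j$ with $j\neq i$ exactly $\lambda_{ij}$ times (well-definedness of the labels, from property~\ref{pro:pfu}). Hence
\[
\ucr{D}{w}=4(t_i-1)+\sum_{j\neq i}\lambda_{ij}t_j .
\]
Setting this equal to $2(n-2)=2n-4$ gives $4t_i+\sum_{j\neq i}\lambda_{ij}t_j=2n$, and subtracting the identity $2n=2t_i+2\sum_{j\neq i}t_j$ (valid since $t_i+\sum_{j\neq i}t_j=n$) yields exactly
\[
2t_i+\sum_{j\neq i}(\lambda_{ij}-2)t_j=0,
\]
which is $E(\pi_i,\Key{D})$. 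As this holds for each $i\in[m]$, the positive integer vector $(t_0,\dots,t_{m-1})$ with $t_0+\cdots+t_{m-1}=n$ solves $\LL(\Key{D})$, establishing the proposition.
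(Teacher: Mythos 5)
Your proposal is correct and follows essentially the same route as the paper: both establish $\ucr{D}{w}=2n-4$ for every white vertex by combining the deletion bound $\ucr{D}{w}\le Z(5,n)-Z(5,n-1)=2(n-2)$ (via Kleitman's theorem) with the double-counting identity $\sum_w\ucr{D}{w}=2\cru(D)=2Z(5,n)$, and then expand $\ucr{D}{w}$ by rotation classes using cleanness to obtain each equation $E(\pi_i,\Key{D})$. The only cosmetic difference is that you phrase the forcing of equality as an averaging step, whereas the paper runs it as a second contradiction; these are the same argument.
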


\begin{proof}
First we show that if $D$ is an optimal drawing of $K_{5,n}$ with $n$
even, then for every $i=0,1,\ldots,n-1$, we have $\ucr{D}{a_i} =
2n-4$. To this end, suppose that $\ucr{D}{a_i} > 2n-4$ for some
$i$. Since $D$ is optimal, $\cru(D) = Z(5,n) = n(n-2)$, and so the
drawing $D'$ of $K_{5,n-1}$ that results by removing $a_i$ from $D$
has fewer than $n(n-2) - (2n-4) = n^2 - 4n +4 = (n-2)^2 = Z(5,n-1)$
crossings, contradicting that $\cru(K_{5,n-1}) = Z(5,n-1)$. Thus
  $\ucr{D}{a_i} \le 2n-4$ for every $i$. Now suppose that
  $\ucr{D}{a_i}<2n-4$ for some $i$. Then $\cru(D) =
  (1/2)\sum_{j\in {\tred{[n]}}} \ucr{D}{a_j} < (1/2)(2n-4)n =
  n(n-2)$, contradicting that $\cru(K_{5,n}) = Z(5,n) = n(n-2)$. Thus
    for every $i\in {\tred{[n]}}$ we have $\ucr{D}{a_i} = 2n-4$, as
    claimed. 

Now let $\pi_0,\pi_1,\ldots,\pi_{m-1}$ be the elements of $\Rot{D}$
(that is, the vertices of $\Key{D}$), and for each $i,j\in {\tred{[m]}}, i\neq
j$, let $\lambda_{ij}$ denote the label of the edge $\pi_i\pi_j$ in $\Key{D}$. 
For each $i\in{\tred{[m]}}$, let $t_i$ be
the number of vertices with rotation $\pi_i$ in $D$. Then (using that
$D$ is clean) for
every $i\in {\tred{[m]}}$ and 
every white vertex $a_k$ with $\rot{D}{a_k}=\pi_i$  we have $\ucr{D}{a_k}
= 4(t_i-1) + \sum_{j\in {\tred{[m]}},j\neq i}
\lambda_{ij} t_j$. Now from the previous paragraph for each
$a_k$ we have $\ucr{D}{a_k} = 2n-4$. Using that
$n=\sum_{j\in {\tred{[m]}}} t_j$,  we obtain
$4(t_i-1) + \sum_{j\in {\tred{[m]}},j\neq i}
\lambda_{ij}t_j  = 2\sum_{j\in {\tred{[m]}}} t_j -
4$. Equivalently, 
$2t_i + \sum_{j\in {\tred{[m]}},j\neq i} (\lambda_{ij} -2 )
t_j = 0$, for every $i\in {\tred{[m]}}$.  Thus $(t_0,t_1,\ldots,t_{m-1})$ is a positive integral
solution of $\LL(\Key{D})$.
\end{proof}

\section{Properties of the key of a clean drawing.}\label{sec:someprco}


We start with an easy, yet crucial, observation.

\begin{proposition}\label{pro:at4}
Let $D$ be an optimal drawing of $K_{5,n}$. Then, for any three
distinct white vertices $a_i,a_j,a_k$, $\ucr{D}{a_i,a_j} +\ucr{D}{a_j,a_k}
+\ucr{D}{a_i,a_k}$ is an even number greater than or equal to $4$.
\end{proposition}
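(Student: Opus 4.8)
The plan is to recognize the quantity $\ucr{D}{a_i,a_j}+\ucr{D}{a_j,a_k}+\ucr{D}{a_i,a_k}$ as the total number of crossings in the subdrawing $D'$ of $K_{5,3}$ that $a_i,a_j,a_k$ induce together with the five black vertices. Indeed, in a good drawing no two edges of a single star $S(a_\ell)$ cross, since they are pairwise adjacent; so every crossing of $D'$ occurs between two distinct stars among $S(a_i),S(a_j),S(a_k)$, and is therefore counted by exactly one of the three terms. Since an optimal drawing is good and any subdrawing of a good drawing is good, $D'$ is a good drawing of $K_{5,3}$. I note that only goodness of $D$, not its full optimality, will be used.

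For the lower bound I would simply invoke $\cru(K_{5,3})=Z(5,3)=\floor{5/2}\floor{4/2}\floor{3/2}\floor{2/2}=4$. As $D'$ is a legitimate (good) drawing of $K_{5,3}$, it has at least $4$ crossings, giving $\ucr{D}{a_i,a_j}+\ucr{D}{a_j,a_k}+\ucr{D}{a_i,a_k}\ge 4$.

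The parity claim is the substantive part, and my approach is a double count over the $\binom{5}{3}=10$ copies of $K_{3,3}$ obtained by keeping all three white vertices and choosing three of the five black vertices. Each such copy induces a good drawing of $K_{3,3}$. A single crossing of $D'$ lies between independent edges, hence involves two distinct white vertices (all of $a_i,a_j,a_k$ are present) and two distinct black vertices $p,q$; it survives in exactly those copies whose black triple contains both $p$ and $q$, and there are $\binom{3}{1}=3$ of these. Thus, summing the crossing numbers of the ten copies counts each crossing of $D'$ exactly three times, so this sum equals $3T$, where $T$ is the total number of crossings of $D'$. On the other hand, every good drawing of $K_{3,3}$ has an odd number of crossings, so the ten summands are each odd and their sum is congruent to $10\equiv 0 \pmod{2}$. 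Since $3$ is odd, $3T\equiv 0 \pmod{2}$ forces $T\equiv 0\pmod{2}$; combined with $T\ge 4$, this is exactly the assertion.

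The main obstacle, and the one external fact I am leaning on, is the parity statement for $K_{3,3}$: every good drawing has an odd number of crossings. I would either cite this classical fact or prove it by the standard observation that the total crossing parity is a drawing invariant: any two drawings of a fixed graph are related by isotopies, second Reidemeister moves (which change the number of crossings between two fixed edges by $\pm 2$), and moves sliding an edge over a vertex. For $K_{3,3}$, sliding an edge $e=x_1y_1$ over another vertex, say $x_2$, alters its crossings only with the two edges $x_2y_2,x_2y_3$ that are independent from $e$ (the edge $x_2y_1$ is adjacent to $e$), changing the total by an even amount; hence the parity is invariant, and since the standard $1$-crossing drawing is odd, every good drawing is odd. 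All the remaining steps are routine.
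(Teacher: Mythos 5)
Your proof is correct, and its overall skeleton matches the paper's: both identify $\ucr{D}{a_i,a_j}+\ucr{D}{a_j,a_k}+\ucr{D}{a_i,a_k}$ with the number of crossings of the induced good drawing of $K_{5,3}$ and get the lower bound from $\cru(K_{5,3})=Z(5,3)=4$. Where you diverge is the parity step. The paper simply cites the fact (from Kleitman's parity theorem) that every good drawing of $K_{5,3}$ has an even number of crossings. You instead derive this from the odd-parity statement for good drawings of $K_{3,3}$, by double counting over the ${5\choose 3}=10$ induced copies of $K_{3,3}$: each crossing lies in exactly ${3\choose 1}=3$ copies, so the sum of the ten (odd) crossing counts equals $3T\equiv 0\pmod 2$, forcing $T$ even. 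This reduction is valid --- in a good drawing every crossing involves two distinct black and two distinct white vertices, so the multiplicity $3$ is right --- and it buys a more self-contained argument resting only on the base case $K_{3,3}$ of the parity theorem, at the cost of reproving by hand something the paper is content to quote. Your closing sketch of the $K_{3,3}$ parity invariance is the standard deformation argument and is fine; your observation that only goodness of $D$ (not optimality) is used is also correct and consistent with how the paper later applies the proposition.
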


\begin{proof}
This follows since $\cru(K_{5,3}) = Z(5,3) = 4$ and (see for
instance~\cite{kleitman}) every good drawing of $K_{5,3}$ has an even
number of crossings.
\end{proof}

The following is an equivalent form of this statement, in the setting
of keys.

\begin{proposition}\label{pro:forkeys}
Let $D$ be a clean drawing of $K_{5,n}$, and let $\pi_0, \pi_1, \pi_2$
be vertices of $\Key{D}$. Let $\lambda_{ij}$ be the label of the edge
$\pi_i \pi_j$, for $i,j\in\{0,1,2\}, i\neq j$. Then $\lambda_{01} +
\lambda_{12} + \lambda_{02}$ is an even number greater than or equal to
$4$.\hfill$\Box$ 
\end{proposition}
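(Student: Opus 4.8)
The plan is to deduce this directly from Proposition~\ref{pro:at4} by reading the edge labels of $\Key{D}$ back as pairwise star-crossing numbers. The only feature of the key construction that I need is its defining property: for a clean drawing, the label $\lambda_{ij}$ of the edge $\pi_i\pi_j$ equals $\ucr{D}{a,a'}$ for \emph{any} choice of white vertices $a,a'$ with $\rot{D}{a}=\pi_i$ and $\rot{D}{a'}=\pi_j$, this value being well-defined precisely because $D$ is clean (property~(\ref{pro:pfu}) together with the definition of the key in Section~\ref{sec:keyscores}).

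First I would fix representatives: since $\pi_0,\pi_1,\pi_2$ are three \emph{distinct} vertices of $\Key{D}$, that is, three distinct elements of $\Rot{D}$, I can pick white vertices $a_i,a_j,a_k$ of $D$ with $\rot{D}{a_i}=\pi_0$, $\rot{D}{a_j}=\pi_1$, and $\rot{D}{a_k}=\pi_2$. Because the three rotations are pairwise different, the three chosen white vertices are pairwise distinct. By the defining property of the key recalled above, $\lambda_{01}=\ucr{D}{a_i,a_j}$, $\lambda_{12}=\ucr{D}{a_j,a_k}$, and $\lambda_{02}=\ucr{D}{a_i,a_k}$, so that $\lambda_{01}+\lambda_{12}+\lambda_{02}=\ucr{D}{a_i,a_j}+\ucr{D}{a_j,a_k}+\ucr{D}{a_i,a_k}$.

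It then remains only to apply Proposition~\ref{pro:at4} to the three distinct white vertices $a_i,a_j,a_k$, which yields that this sum is even and at least $4$. I would remark that although Proposition~\ref{pro:at4} is phrased for optimal drawings, its proof uses only that the subdrawing of $K_{5,3}$ induced by $a_i,a_j,a_k$ together with the five black vertices is good (so that its crossing total is even and at least $\cru(K_{5,3})=4$); since clean drawings are good by definition, the argument transfers verbatim to the present setting. I do not anticipate a genuine obstacle: the whole statement is a translation of Proposition~\ref{pro:at4} into the language of keys, and the single point requiring a word of care is the well-definedness of the labels $\lambda_{ij}$, which is exactly what cleanness provides. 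This is why the statement can be marked as immediate.
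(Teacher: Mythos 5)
Your proof is correct and is essentially the paper's own argument: the paper states Proposition~\ref{pro:forkeys} as an immediate translation of Proposition~\ref{pro:at4} into the language of keys, exactly as you do. Your extra remark about the mismatch between ``optimal'' in Proposition~\ref{pro:at4} and ``clean'' here --- resolved by noting that the proof of Proposition~\ref{pro:at4} only uses goodness of the induced $K_{5,3}$ --- is a point the paper glosses over, and it is handled correctly.
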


Let $\gamma, \kappa$ be cyclic permutations on the same set of symbols. A
{\em route} from $\gamma$ to $\kappa$ is a set of distinct
transpositions, which may be ordered into some sequence such that the
successive application of (all) the transpositions in this sequence takes
$\gamma$ to $\kappa$. For instance, if $\gamma=(abcd)$ and
$\kappa=(acdb)$, then $\{(bd),(bc)\}$ is a route from $\gamma$ to
$\kappa$: if we apply first $(bc)$ to $\gamma$, and then $(bd)$ to the
resulting cyclic permutation, we obtain $\kappa$. 

The {\em size} $|P|$ of a route $P$ is its number of
transpositions.   An
{\em antiroute} from $\gamma$ to $\kappa$ is a route from $\gamma$ to
the reverse cyclic permutation $\overline{\kappa}$ of $\kappa$. 
Note that if $P$ is a route (respectively, antiroute) from $\gamma$ to $\kappa$, then
$P$ is also a route (respectively, antiroute) from $\kappa$ to $\gamma$. The {\em
  antidistance} between two cyclic permutations is the smallest size of an
antiroute between them.

The following is an easy consequence of (the proof of) Theorem 5
in~\cite{woodall}.

\begin{lemma}\label{lem:wo}
Let $D$ be a good drawing of $K_{5,2}$, with white vertices $a_0,
a_1$. Then there is an antiroute
from $\rot{D}{a_0}$ to $\rot{D}{a_1}$ of size 
$\ucr{D}{a_0,a_1}$.\hfill$\Box$
\end{lemma}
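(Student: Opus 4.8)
The plan is to prove Lemma~\ref{lem:wo} by establishing a bijective-style correspondence between crossings in a good drawing of $K_{5,2}$ and transpositions in an antiroute from $\rot{D}{a_0}$ to $\rot{D}{a_1}$. The key conceptual point is that an antiroute from $\gamma$ to $\kappa$ is, by definition, a route from $\gamma$ to $\overline{\kappa}$, and the reversal $\overline{\kappa}$ of a rotation is exactly what one sees when one looks at the second vertex ``from the inside'': if we place $a_0$ and $a_1$ in the plane, the counterclockwise cyclic order of the five black vertices as seen from $a_0$ and the counterclockwise order as seen from $a_1$ are naturally reversed relative to each other in a crossing-free configuration. So the antidistance genuinely measures the discrepancy between the two rotations after accounting for this intrinsic reversal.

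First I would recall (or re-derive) the relevant content of Theorem~5 in~\cite{woodall}, which analyzes good drawings of $K_{5,2}$: such a drawing is determined, up to the quantity we care about, by the pair of rotations $\rot{D}{a_0}$ and $\rot{D}{a_1}$, and the number of crossings $\ucr{D}{a_0,a_1}$ between the two stars $S(a_0)$ and $S(a_1)$ is a function of this pair. Two edges $a_0 i$ and $a_1 j$ (with $i\neq j$) can cross, and in a good drawing each such pair crosses at most once, so $\ucr{D}{a_0,a_1}$ counts a set of ``inverted'' pairs of black vertices. I would make precise that each crossing corresponds to a pair $\{i,j\}$ of black vertices whose cyclic relative order differs between the view from $a_0$ and the reversed view from $a_1$, and that the minimal set of transpositions correcting all these inversions is precisely an antiroute of size equal to the number of crossings.

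The main steps, in order, are: (i) set up the correspondence between the drawing of $K_{5,2}$ and the two rotations together with the pattern of crossings between the two stars; (ii) show that each crossing forces a specific inversion, so that the set of inverting transpositions is a route from $\rot{D}{a_0}$ to $\overline{\rot{D}{a_1}}$, i.e.\ an antiroute from $\rot{D}{a_0}$ to $\rot{D}{a_1}$, of size at most $\ucr{D}{a_0,a_1}$; and (iii) conversely observe that one cannot do better, so the antidistance is attained and there exists an antiroute of size exactly $\ucr{D}{a_0,a_1}$. Since the statement only asserts existence of an antiroute of this size (not minimality of the antidistance), step (iii) can be trimmed to just producing one such antiroute.

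The hard part will be correctly handling the reversal bookkeeping: keeping straight that the antiroute targets the reverse permutation $\overline{\kappa}$, and verifying that the transpositions extracted from the geometric crossings are genuinely distinct and admit a valid ordering whose composition realizes the required permutation. Because of this, I expect the cleanest route is not to reprove Woodall's theorem from scratch but to cite it directly: the lemma statement explicitly says it is ``an easy consequence of (the proof of) Theorem 5 in~\cite{woodall},'' so the intended proof is short, extracting from Woodall's explicit analysis of $K_{5,2}$ the fact that the crossing count equals an antidistance-type quantity and reading off the antiroute from the inverted pairs. I would therefore present a brief argument that translates Woodall's crossing formula into the transposition language of routes and antiroutes, flagging the reversal convention as the one subtle point.
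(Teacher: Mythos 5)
Your proposal matches the paper's treatment: the paper gives no independent argument for Lemma~\ref{lem:wo}, but simply records it as an immediate consequence of (the proof of) Theorem~5 in~\cite{woodall}, exactly as you suggest in your final paragraph, with the crossings of the two stars corresponding to the transpositions of an antiroute and the reversal convention being the only point requiring care. Your additional sketch of the crossing--inversion correspondence is consistent with Woodall's analysis, and your observation that the minimality step can be dropped (since only existence of an antiroute of the stated size is claimed) is correct.
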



The following statement is implicitly proved in
the discussion after the proof of~\cite[Theorem 5]{woodall}.

\begin{lemma}\label{lem:woo}
Let $D$ be a clean drawing of $K_{5,r}$ with white vertices $a_0, a_1,
\ldots,$ $ a_{r-1}$, and let $\pi_i:=\rot{D}{a_i}$. Suppose that
$\pi_i\neq \pi_j$ whenever $i\neq j$, and for all $i\neq j$ let
$\lambda_{ij}:=\ucr{D}{a_i,a_j}$. For $k=0,1,2,3,4$, let
$\gamma_k:=\rot{D}{k}$. Then
there exist:
\begin{enumerate}

\item\label{it:w1} for all $i,j\in [r]$ with $i\neq j$, an antiroute
  $P_{ij}$ from $\pi_i$ to $\pi_j$ of size $\lambda_{ij}$; 
\item\label{it:w2} for all $k,\ell \in [5]$ with $k\neq
  \ell$, an antiroute $Q_{k\ell}$ from $\gamma_k$ to $\gamma_\ell$;
\end{enumerate}
such that the transposition $(a_i\,a_j)$ is in $Q_{k\ell}$ if and only
if the transposition $(k\,\ell)$ is in $P_{ij}$.\hfill$\Box$
\end{lemma}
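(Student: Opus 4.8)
The plan is to produce both families of antiroutes from a single symmetric object, namely the set of crossings of $D$ organized by the four vertices that each crossing involves. The starting point is a small structural fact valid in every good drawing: each quadrilateral, i.e. the $K_{2,2}$ spanned by two white vertices $a_i,a_j$ and two black vertices $k,\ell$, carries \emph{at most one} crossing. Indeed, the only pairs of non-adjacent edges are the two opposite pairs $\{a_ik,a_j\ell\}$ and $\{a_i\ell,a_jk\}$; if both of them crossed, then traversing the $4$-cycle $a_i\,k\,a_j\,\ell$ as a closed curve would produce the interleaved sequence of crossings $XYXY$, which violates Gauss's planarity (parity) condition and is therefore not realizable in the plane. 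Hence at most one opposite pair crosses, and every crossing of $D$ lies in a uniquely determined quadrilateral.

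Next I would define a symmetric incidence relation $R$ between unordered white pairs and unordered black pairs by declaring $(\{a_i,a_j\},\{k,\ell\})\in R$ exactly when the quadrilateral on these four vertices contains its (unique) crossing, and then set
\[
P_{ij}:=\{(k\,\ell)\ :\ (\{a_i,a_j\},\{k,\ell\})\in R\},\qquad
Q_{k\ell}:=\{(a_i\,a_j)\ :\ (\{a_i,a_j\},\{k,\ell\})\in R\}.
\]
With these definitions the compatibility demanded in the statement costs nothing: $(a_i\,a_j)\in Q_{k\ell}$, $(\{a_i,a_j\},\{k,\ell\})\in R$, and $(k\,\ell)\in P_{ij}$ are literally the same condition. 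Moreover $|P_{ij}|$ counts exactly the black pairs whose quadrilateral with $a_i,a_j$ is crossed, which by the previous paragraph equals the number of crossings between the stars $S(a_i)$ and $S(a_j)$, i.e. $\ucr{D}{a_i,a_j}=\lambda_{ij}$, as required.

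The remaining point is that these sets are genuine antiroutes: that $P_{ij}$ transforms $\rot{D}{a_i}=\pi_i$ into $\overline{\pi_j}$, and $Q_{k\ell}$ transforms $\gamma_k$ into $\overline{\gamma_\ell}$. Both assertions are the same statement read on the two sides of the drawing. For $P_{ij}$ I restrict $D$ to the $K_{5,2}$ induced by $a_i,a_j$ and the five black vertices and read off the rotations of the two degree-$5$ vertices; for $Q_{k\ell}$ I restrict to the $K_{2,r}$ induced by $k,\ell$ and the $r$ white vertices and read off the rotations of the two degree-$r$ vertices. In each case the claim is that the transposition set indexed by the crossed quadrilaterals is an antiroute between the two rotations whose size equals the number of crossings. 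This is precisely Lemma~\ref{lem:wo}, sharpened so as to identify which transpositions occur; the sharpened form is what is established in the discussion following the proof of Theorem~5 of~\cite{woodall}, and its argument is insensitive to how many degree-two vertices are present (five on one side, $r$ on the other), so it applies verbatim to both restrictions. The mechanism is the familiar sliding deformation: move one high-degree vertex along a generic arc toward the other; the moving rotation changes by a transposition exactly when the vertex sweeps across an edge of the other star, each such event destroys one crossing, and when the two vertices coincide the moving rotation has become the reverse of the stationary one.

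The hard part is exactly this last ingredient: upgrading the mere \emph{existence} statement of Lemma~\ref{lem:wo} to the identification of the antiroute with the crossed-quadrilateral transposition set, and checking that the same deformation argument remains valid on the $K_{2,r}$ side (arbitrarily many degree-two vertices) as on the $K_{5,2}$ side. Once that identification is in hand on both sides, the relation $R$ couples $P_{ij}$ and $Q_{k\ell}$ automatically, and everything else — the at-most-one-crossing-per-quadrilateral fact, the size count $|P_{ij}|=\lambda_{ij}$, and the bookkeeping of $R$ — is routine.
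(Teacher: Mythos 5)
Your proposal is correct and is essentially an explicit unpacking of the proof the paper omits: the paper attributes this lemma entirely to the discussion following Theorem~5 of~\cite{woodall}, and your construction --- defining both families of transposition sets from the single symmetric relation ``the quadrilateral on $\{a_i,a_j\}$ and $\{k,\ell\}$ carries its (unique) crossing,'' so that the compatibility biconditional and the size count $\abs{P_{ij}}=\lambda_{ij}$ follow for free from the at-most-one-crossing-per-$C_4$ fact --- is exactly the coupling mechanism that reference supplies. The one genuinely nontrivial ingredient, namely that these crossed-quadrilateral transposition sets are antiroutes (in the paper's sense, where each transposition must be applicable to the current cyclic order), you defer to Woodall just as the paper does, so there is no gap relative to the paper's own standard of proof.
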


We now use these powerful statements to prove that certain graphs
cannot be the subgraphs of the key of a clean drawing.

\begin{proposition}\label{pro:4claw}
The graph in Figure~\ref{fig:sk13} is not the key of any clean drawing
of $K_{5,n}$.
\end{proposition}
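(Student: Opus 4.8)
The plan is to argue by contradiction, assuming the graph of Figure~\ref{fig:sk13} — a claw with center $\pi_0$ and leaves $\pi_1,\pi_2,\pi_3$, all three edges labelled $4$ — occurs as a subgraph of $\Key{D}$ for some clean drawing $D$ of $K_{5,n}$. I would first pass to a manageable finite sub-configuration: choose representative white vertices $a_0,a_1,a_2,a_3$ with $\rot{D}{a_i}=\pi_i$, and consider the drawing $D'$ of $K_{5,4}$ they induce together with the five black vertices. Since the $\pi_i$ are pairwise distinct, $D'$ is automatically clean (properties~(\ref{pro:pfi}) and~(\ref{pro:pfu}) are vacuous, and~(\ref{pro:pfo}) is inherited), and $\ucr{D'}{a_0,a_i}=\lambda_{0i}=4$ for $i=1,2,3$. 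This places me exactly in the hypotheses of Lemma~\ref{lem:woo}.

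Applying Lemma~\ref{lem:woo} to $D'$ I obtain white antiroutes $P_{ij}$ from $\pi_i$ to $\pi_j$ of size $\lambda_{ij}$ (so $|P_{01}|=|P_{02}|=|P_{03}|=4$), together with black antiroutes $Q_{k\ell}$ between the rotations $\gamma_k=\rot{D'}{k}$, all coupled by the biconditional $(a_i\,a_j)\in Q_{k\ell}\iff (k\,\ell)\in P_{ij}$. The key move is a double count of the incidences between white and black transpositions. From the white side, the three size-$4$ antiroutes $P_{01},P_{02},P_{03}$ already contribute $4+4+4=12$ incidences, so $\sum_{0\le k<\ell\le 4}|Q_{k\ell}|=\sum_{0\le i<j\le 3}\lambda_{ij}\ge 12$. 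From the black side there are only $\binom{5}{2}=10$ pairs $(k,\ell)$, and each $Q_{k\ell}$ is an antiroute between two cyclic permutations of the \emph{four} symbols $a_0,a_1,a_2,a_3$. A one-line finite check shows that any two cyclic permutations of four symbols lie at antidistance at most $1$ (from any one of them a single transposition reaches every other). Hence, were each $Q_{k\ell}$ of size equal to this antidistance, one would get $\sum|Q_{k\ell}|\le 10<12$, the desired contradiction.

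The main obstacle is precisely the passage from ``antidistance $\le 1$'' to ``$|Q_{k\ell}|\le 1$'': Lemma~\ref{lem:woo} delivers the $Q_{k\ell}$ only as \emph{some} valid antiroutes, not necessarily minimal ones, and a valid antiroute between two $4$-cycles can in principle be inflated (for instance $\{(a_0a_1),(a_0a_2)\}$ is a legitimate route between a suitable pair of $4$-cycles). So the heart of the proof is to rule out such inflation in our configuration. I expect to do this by combining the rigidity of routes among four-symbol cyclic permutations with the coupling biconditional: by pigeonhole some black transposition $(k\ell)$ lies in at least two of $P_{01},P_{02},P_{03}$, forcing $Q_{k\ell}$ to contain two transpositions sharing the symbol $a_0$; one then checks — over the finitely many choices for the five rotations $\gamma_0,\dots,\gamma_4$ (each one of only six cyclic permutations of four symbols), using Proposition~\ref{pro:forkeys} to pin down the parities of the remaining labels $\lambda_{12},\lambda_{13},\lambda_{23}$ — that the prescribed antiroutes $Q_{k\ell}$ cannot all be simultaneously realizable, so that the black side is genuinely capped at $10$. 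Carrying out this finite but delicate case analysis is where the real work lies.
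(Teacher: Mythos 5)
There is a genuine problem here, and it starts before the argument does: you have misread the graph of Figure~\ref{fig:sk13}. It is not a claw with three edges labelled $4$. It is the complete graph on the four \emph{specific} rotations $\pi_0=(01234)$, $\pi_1=(01432)$, $\pi_2=(04312)$, $\pi_3=(03421)$, with the three edges at $\pi_0$ labelled $1$ and the three edges of the triangle $\pi_1\pi_2\pi_3$ labelled $2$ (a key is by definition a complete edge-labelled graph, so a claw could never be one). This matters decisively for your counting scheme: the correct labels give $\sum_{i<j}\lambda_{ij}=3\cdot 1+3\cdot 2=9$, which is \emph{less} than the $\binom{5}{2}=10$ black pairs, so the double count you propose yields no contradiction at all. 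The statement is also not a purely numerical one -- the same labelled $K_4$ with a different choice of the four rotations need not be excluded by counting -- which is why the paper's proof works with the explicit permutations: it pins down the (few) possible antiroutes $P_{ij}$ of sizes $1,1,1,2,2,2$ from Lemma~\ref{lem:woo}, splits into four cases according to the two choices each for $P_{13}$ and $P_{23}$, reads off the coupled black antiroutes $Q_{k\ell}$, and shows in each case that no assignment of the five black rotations $\rot{D}{0},\dots,\rot{D}{4}$ is consistent with them.

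Even setting the misreading aside and granting your labels, the argument is not complete on its own terms, and you say so yourself: the step from ``antidistance at most $1$'' to ``$|Q_{k\ell}|\le 1$'' is exactly what is missing, and Lemma~\ref{lem:woo} gives you no control over the sizes of the $Q_{k\ell}$ -- indeed the coupling forces $\sum_{k<\ell}|Q_{k\ell}|=\sum_{i<j}|P_{ij}|$ identically, so the two sides of your ``double count'' are equal by construction and cannot be played against each other. The contradiction has to come from the \emph{contents} of the antiroutes (which transpositions they contain, and whether they can be ordered so as to act on the actual cyclic permutations involved), not from their total size. That is the finite case analysis you defer to the end, and it is the entire proof; I would not accept the proposal as written.
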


\begin{figure}
\begin{center}
\scalebox{0.7}{\input{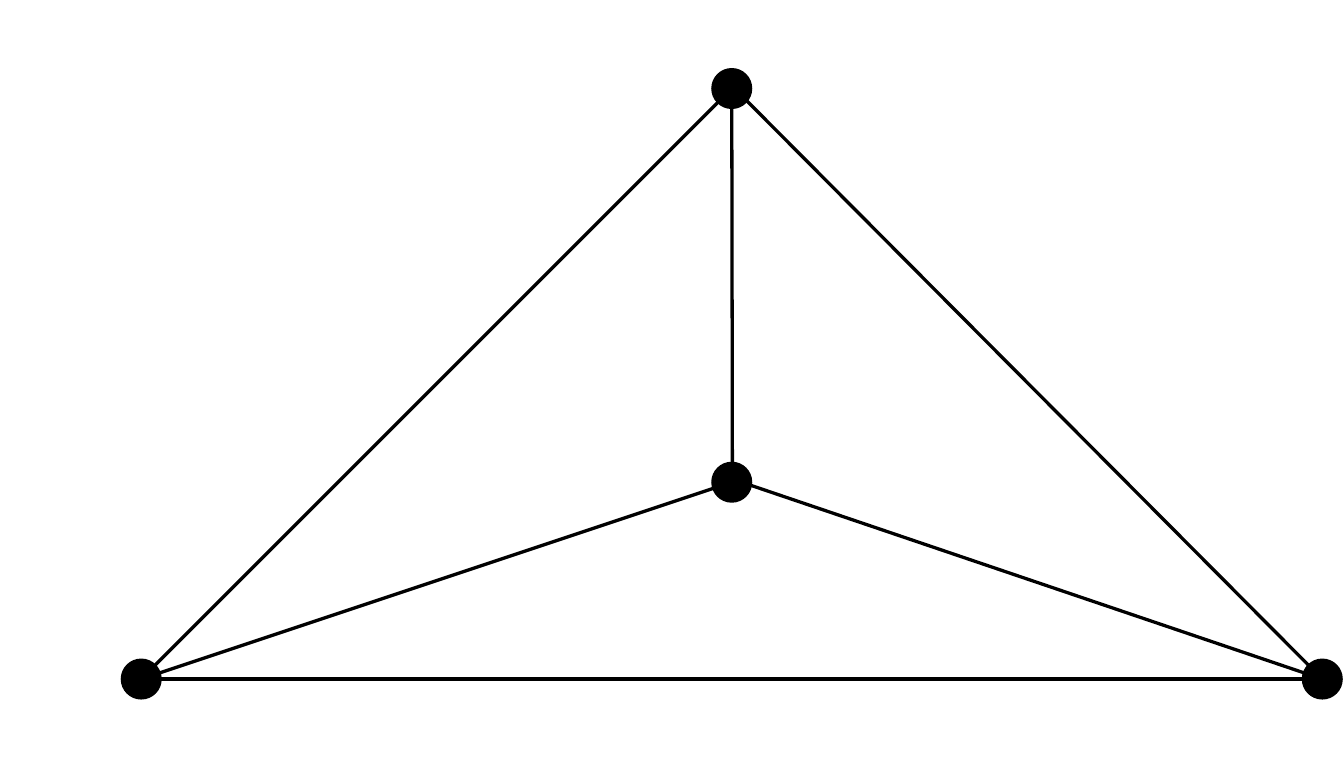_t}}
\end{center}
\caption{This cannot be the key of a clean drawing of $K_{5,n}$.}
\label{fig:sk13}
\end{figure}

\begin{proof}
Suppose by way of contradiction that the graph in
Figure~\ref{fig:sk13} is the key of some clean drawing of $K_{5,n}$.  
This implies in particular that there exists a
drawing $D$ of $K_{5,4}$ with white vertices $a_0, a_1, a_2, a_3$ such
that $\rot{D}{a_i}=\pi_i$ for $i=0,1,2,3$, with $\pi_0=(01234),
\pi_1=(01432), \pi_2=(04312)$, and $\pi_3=(03421)$,
and $
\ucr{D}{a_0,a_1} =
\ucr{D}{a_0,a_2} = 
\ucr{D}{a_0,a_3} = 1$, and
$\ucr{D}{a_1,a_2} = 
\ucr{D}{a_1,a_3} = 
\ucr{D}{a_2,a_3} = 2$.

The required contradiction is obtained by showing that there do not exist rotations 
$\rot{D}{0},
\rot{D}{1},
\rot{D}{2},
\rot{D}{3},
\rot{D}{4}$, and antiroutes $P_{ij}, Q_{k\ell}$ that satisfy
Lemma~\ref{lem:woo} (with the given values of $\ucr{D}{a_i,a_j}$ for
$i,j\in\{0,1,2,3\},$ $ i\neq j$). We start by determining the possible antiroutes
$P_{ij}$ (these depend only on the information we already have). Then
we investigate the possible antiroutes $Q_{k\ell}$ consistent with
each choice of the antiroutes $P_{ij}$, and prove that, in all cases,
every possible choice of
$\rot{D}{0},
\rot{D}{1},
\rot{D}{2},$ $
\rot{D}{3}$ and 
$\rot{D}{4}$ leads to an inconsistency.

The following facts are easily verified:
(i) the only antiroute from $\pi_0$ to $\pi_1$ of
size $1$ is
$\{(01)\}$; (ii) the only antiroute from $\pi_0$ to $\pi_2$ of size $1$ is
$\{(12)\}$; (iii)  the only antiroute from $\pi_0$ to $\pi_3$ of size
$1$ is
$\{(34)\}$; (iv)
the only antiroute of size $2$ from $\pi_1$ to $\pi_2$ is
$\{(02),(34)\}$; (v)
there are two distinct antiroutes of size $2$ from $\pi_2$ to
$\pi_3$, namely $\{(01),(02)\}$ and $\{(03),(04)\}$; and (vi) there
are two
distinct antiroutes of size $2$ from $\pi_1$ to $\pi_3$, namely
$\{(02),(12)\}$ and $\{(23),(24)\}$.

Now for $i,j\in\{0,1,2,3\}, i\neq j$, let $P_{ij}$ be the antiroute
guaranteed by Lemma~\ref{lem:woo}. By the previous observations it
follows that necessarily 
$P_{01}=\{(01)\}$, $P_{02}=\{(12)\}$,
$P_{03}=\{(34)\}$, and $P_{12} = \{(02),(34)\}$.  Also by the previous
observations there are two choices for $P_{23}$, namely
$\{(01),(02)\}$ and $\{(03),(04)\}$; and there are two choices for
$P_{13}$, namely $\{(02),(12)\}$ and $\{(23),(24)\}$.

Thus $P_{01}, P_{02}, P_{03}, P_{12}$ are all determined:

\[P_{01}=\{(01)\}, P_{02}=\{(12)\},
P_{03}=\{(34)\}, P_{12} = \{(02),(34)\},
\]

\medskip

and there
are four possible combinations of $P_{13}$ and $P_{23}$:
\begin{itemize}
\item[(a)] $P_{23} = \{(01),(02)\}$ and $P_{13} = \{(02),(12)\}$. 
\medskip

\noindent In this case, by Lemma~\ref{lem:woo}, we have 
$
Q_{01}= \{(a_0a_1),(a_2a_3)\},
Q_{02}= \{(a_1a_2),(a_2a_3),(a_1a_3)\},
Q_{03}= \emptyset,
Q_{04}= \emptyset,
Q_{12}= \{(a_0a_2),(a_1a_3)\},$ $
Q_{13}= \emptyset,
Q_{14}= \emptyset,
Q_{23}= \emptyset,
Q_{24}= \emptyset$, and
$Q_{34} = \{(a_0a_3),(a_1a_2)\}$.
\medskip

\item[(b)] $P_{23} =\{(01),(02)\}$ and $P_{13} = \{(23),(24)\}$.
\medskip

\noindent In this case, by Lemma~\ref{lem:woo}, we have 
$
Q_{01}= \{(a_0a_1),(a_2a_3)\},
Q_{02}= \{(a_1a_2),(a_2a_3)\},
Q_{03}= \emptyset,
Q_{04}= \emptyset,
Q_{12}= \{(a_0a_2)\},
Q_{13}= \emptyset,
Q_{14}= \emptyset,
Q_{23}= \{(a_1a_3)\},
Q_{24}= \{(a_1a_3)\}$, and
$Q_{34} = \{(a_0a_3),(a_1a_2)\}$.
\medskip

\item[(c)] $P_{23} = \{(03),(04)\}$ and $P_{13} = \{(02),(12)\}$.
\medskip

\noindent In this case, by Lemma~\ref{lem:woo}, we have 
$
Q_{01}= \{(a_0a_1)\},
Q_{02}= \{(a_1a_2),$ $(a_1a_3)\},
Q_{03}= \{(a_2a_3)\},
Q_{04}= \{(a_2a_3)\},
Q_{12}= \{(a_0a_2),(a_1a_3)\},$ $
Q_{13}= \emptyset,
Q_{14}= \emptyset,
Q_{23}= \emptyset,
Q_{24}= \emptyset$, and
{$Q_{34} = \{(a_0a_3),(a_1a_2)\}$.}
\medskip

\item[(d)] $P_{23} = \{(03),(04)\}$ and $P_{13} = \{(23),(24)\}$.
\medskip

\noindent In this case, by Lemma~\ref{lem:woo}, we have 
$
Q_{01}= \{(a_0a_1)\},
Q_{02}= \{(a_1a_2)\},$ $
Q_{03}= \{(a_2a_3)\},
Q_{04}= \{(a_2a_3)\},
Q_{12}= \{(a_0a_2)\},
Q_{13}= \emptyset,
Q_{14}= \emptyset,
Q_{23}= \{(a_1a_3)\},
Q_{24}= \{(a_1a_3)\}$, and
{$Q_{34} = \{(a_0a_3),(a_1a_2)\}$.}
\end{itemize}
\bigskip

We only analyze (that is, derive a contradiction from) (a). The cases
(b), (c), and (d) are handled in a totally analogous manner.

Since $Q_{13}=Q_{14}=\emptyset$,  it follows
that $\rot{D}{3}$ and $\rot{D}{4}$ are both equal to the reverse of
$\rot{D}{1}$; in particular,
$\rot{D}{3} = \rot{D}{4}$.
Since $Q_{01}=\{(a_0a_1),(a_2a_3)\}$ and $Q_{12}=\{(a_0a_2),(a_1a_3)\}$, it follows
that in $\rot{D}{1}$: (i) $a_0$ and $a_1$ must be adjacent; (ii) $a_2$ and $a_3$
must be adjacent; (iii) $a_0$ and $a_2$ must be adjacent; and (iv) $a_1$ and $a_3$ must
be adjacent. It follows immediately that $\rot{D}{1}$ is either
$(a_0a_2a_3a_1)$ or $(a_0a_1a_3a_2)$. Since $\rot{D}{3}$ and $\rot{D}{4}$ are both the 
reverse of $\rot{D}{1}$, then each of $\rot{D}{3}$ and
$\rot{D}{4}$ is either $(a_0a_1a_3a_2)$ or $(a_0a_2a_3a_1)$. However, since
$Q_{34}=\{(a_0a_3),(a_1a_2)\}$, then one must reach the reverse of $\rot{D}{4}$ from
$\rot{D}{3}$ by applying the transpositions $(a_0a_3)$ and $(a_1a_2)$ (in some
order). Since neither of these transpositions may be applied to $(a_0a_1a_3a_2)$
or $(a_0a_2a_3a_1)$, we obtain the required contradiction.
\end{proof}

\begin{proposition}\label{pro:chimuelo}
The graph in Figure~\ref{fig:chim} is not the key of any clean drawing
of $K_{5,n}$.
\end{proposition}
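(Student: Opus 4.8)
The plan is to mirror exactly the strategy used in Proposition~\ref{pro:4claw}. Suppose, for contradiction, that the graph in Figure~\ref{fig:chim} is the key of some clean drawing of $K_{5,n}$. Then there is a drawing $D$ of $K_{5,r}$, where $r$ is the number of vertices of the displayed graph, whose white vertices $a_0,\ldots,a_{r-1}$ realize the prescribed rotations $\pi_i$ and satisfy $\ucr{D}{a_i,a_j}=\lambda_{ij}$, the label on the edge $\pi_i\pi_j$. First I would invoke Lemma~\ref{lem:woo} to obtain, for each pair $i\neq j$, an antiroute $P_{ij}$ from $\pi_i$ to $\pi_j$ of size exactly $\lambda_{ij}$, together with the coupled black-vertex antiroutes $Q_{k\ell}$ between $\gamma_k:=\rot{D}{k}$ and $\gamma_\ell:=\rot{D}{\ell}$, all subject to the transposition-correspondence: $(a_i\,a_j)\in Q_{k\ell}$ if and only if $(k\,\ell)\in P_{ij}$.

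Next I would enumerate, for each edge of the key, all antiroutes of the correct size between the two specified cyclic permutations. As in the previous proof, the edges labeled $1$ pin down their $P_{ij}$ uniquely (each is the single transposition read off by comparing the two cyclic orders), while the edges labeled $2$ may admit a small number of distinct antiroutes. This produces a short finite list of admissible families $\{P_{ij}\}$. Each such family then determines all the sets $Q_{k\ell}$ explicitly, by recording which transpositions $(k\,\ell)$ appear across the various $P_{ij}$.

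For each candidate family I would attempt to reconstruct the black rotations $\rot{D}{0},\ldots,\rot{D}{4}$. Whenever some $Q_{k\ell}=\emptyset$, the permutations $\gamma_k$ and $\gamma_\ell$ must be mutual reverses, forcing equalities among the black rotations; and each nonempty $Q_{k\ell}$ imposes adjacency constraints, since its transpositions must be applicable in sequence to carry $\gamma_k$ to $\overline{\gamma_\ell}$. Exactly as in case~(a) of Proposition~\ref{pro:4claw}, these adjacency requirements either over-determine a single rotation (demanding incompatible adjacencies among the white vertices) or demand a transposition that cannot legally be applied to the forced rotation. Either way, a contradiction arises in every branch, completing the argument.

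The main obstacle will be the bookkeeping in the case split: verifying that the enumeration of size-$2$ antiroutes is genuinely exhaustive, and that each resulting family $\{Q_{k\ell}\}$ is provably unrealizable. I expect, however, that as in Proposition~\ref{pro:4claw} one representative branch can be analyzed in full while the remaining branches are dispatched in a totally analogous manner, since the underlying combinatorial obstruction—a forbidden adjacency pattern in some $\rot{D}{k}$, or a forced equality $\rot{D}{k}=\rot{D}{\ell}$ incompatible with a nonempty $Q_{k\ell}$—recurs across all cases.
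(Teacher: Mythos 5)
Your proposal follows exactly the paper's own argument: invoke Lemma~\ref{lem:woo}, observe that the label-$1$ edges determine their antiroutes $P_{ij}$ uniquely while each label-$2$ edge admits two size-$2$ antiroutes (yielding four cases for the pair $P_{02},P_{13}$), read off the coupled sets $Q_{k\ell}$, and derive a contradiction in each branch from the forced equalities and adjacency constraints on the black rotations. This is precisely how the paper proves Proposition~\ref{pro:chimuelo}, so your plan is correct and essentially identical in approach.
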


\begin{figure}
\begin{center}
\scalebox{0.7}{\input{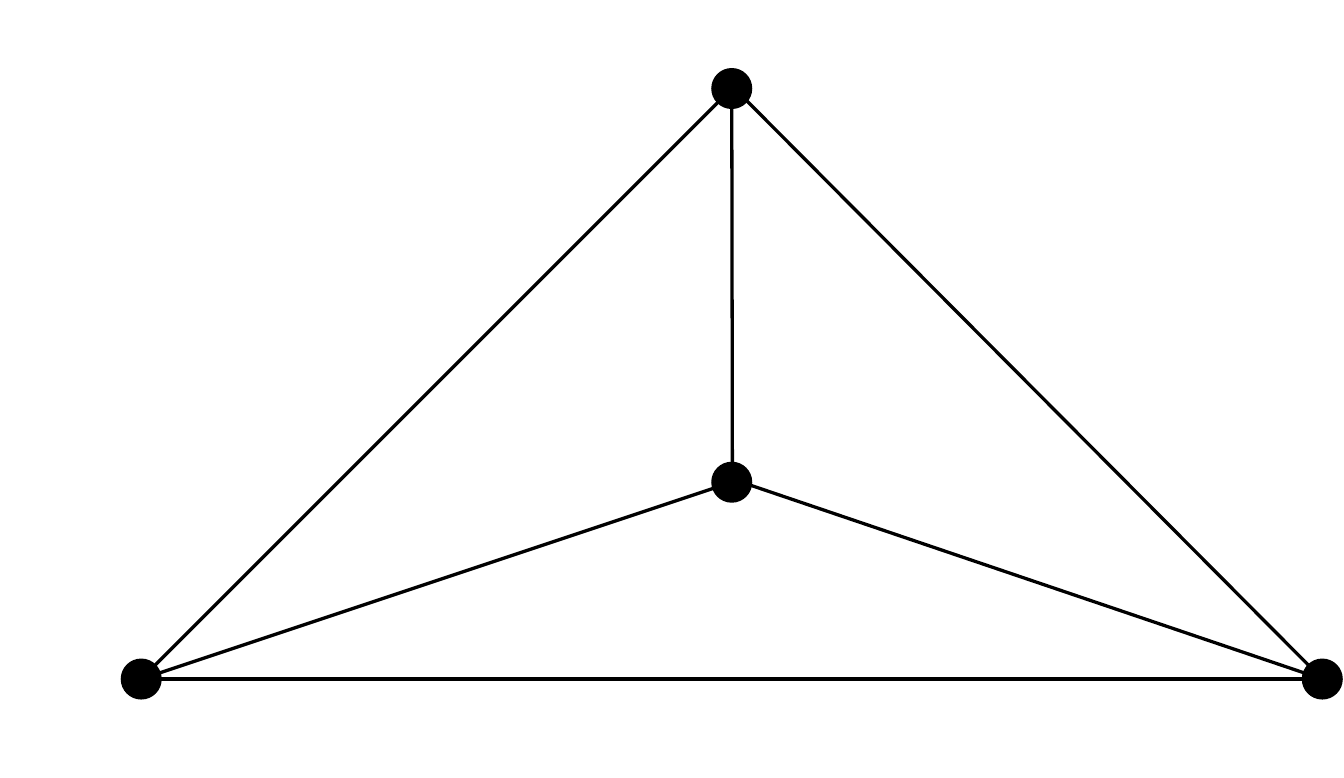_t}}
\end{center}
\caption{This cannot be the key of a clean drawing of
  $K_{5,n}$.}
\label{fig:chim}
\end{figure}

\begin{proof}
Suppose by way of contradiction that the graph in
Figure~\ref{fig:chim} is the key of some clean drawing of $K_{5,n}$.  
Thus there exists a drawing $D$ of $K_{5,4}$ with white vertices 
$a_0, a_1, a_2, a_3$ such
that $\rot{D}{a_i}=\pi_i$ for $i=0,1,2,3$, with $\pi_0=(01234),
\pi_1=(01432), \pi_2=(03241)$, and $\pi_3=(04231)$,
and $\ucr{D}{a_0,a_1} =
\ucr{D}{a_1,a_2} = 
\ucr{D}{a_2,a_3} = 
\ucr{D}{a_0,a_3}=1$, and
$\ucr{D}{a_0,a_2} = 
\ucr{D}{a_1a_3} = 2$. 
For $i,j\in \{0,1,2,3\},i\neq j$, let $P_{ij}$ be the antiroute
guaranteed by Lemma~\ref{lem:woo}.  It is easy to verify that the only
antiroute of size $1$ from $\pi_0$ to $\pi_1$ is $\{(01)\}$, and so
necessarily $P_{01}=\{(01)\}$. Analogous arguments show that necessarily
$P_{23}=\{(01)\}$ and that $P_{12}=P_{03}=\{(23)\}$. 
It is also readily checked that there are two 
antiroutes of size $2$ from $\pi_0$ to $\pi_2$, namely 
$\{(04),(14)\}$
and 
$\{(24),(34)\}$ (moreover, these are also the two antiroutes of size
$2$ from $\pi_1$ to $\pi_3$). Thus
each of $P_{02}$ and $P_{13}$ is either
$\{(04),(14)\}$
or 
$\{(24),(34)\}$. 

Thus $P_{01},P_{03},P_{12}$, and $P_{23}$ are all determined:
\[
P_{01}=P_{23}=\{(01)\}, P_{03}=P_{12}=\{(23)\},
\]
and there are four possible combinations of $P_{02}$ and $P_{13}$:
\begin{itemize}

\medskip
\item[(a)]
$P_{02}=P_{13}=\{(04),(14)\}$.
\\

\noindent
In this case, by Lemma \ref{lem:woo}, 
$Q_{01}=\{(a_0a_1),(a_2a_3)\},
Q_{04}=\{(a_0a_2),$ $(a_1a_3)\},$ $
Q_{14}=\{(a_0a_2),(a_1a_3)\},
Q_{23}=\{(a_0a_3),(a_1a_2)\},$ and
$Q_{02}=Q_{03}=Q_{12}=Q_{13}=Q_{24}=Q_{34}=\emptyset$.

\medskip
\item[(b)] 
$P_{02}=\{(04),(14)\}$ and 
$P_{13}=\{(24),(34)\}$.\\

\noindent
In this case, by Lemma \ref{lem:woo}, 
$Q_{01}=\{(a_0a_1),(a_2a_3)\},
Q_{04}=Q_{14}=\{(a_0a_2)\},$ $
Q_{23}=\{(a_0a_3),(a_1a_2)\},
Q_{24}=Q_{34}=\{(a_1a_3)\}$, and
$Q_{02} = Q_{03} = Q_{12} = Q_{13} = \emptyset$.

\medskip
\item[(c)]  
$P_{02}=\{(24),(34)\}$ and 
$P_{13}=\{(04),(14)\}$.\\

\noindent
In this case, by Lemma~\ref{lem:woo},
$Q_{01}=\{(a_0a_1),(a_2a_3)\}$,
$Q_{04}=Q_{14}=\{(a_1a_3)\}$,
$Q_{23}=\{(a_0a_3),(a_1a_2)\}$,
$Q_{24}=Q_{34}=\{(a_0a_2)\}$,
and
$Q_{02}=Q_{03}=Q_{12}=Q_{13}=\emptyset$.

\medskip
\item[(d)]  
$P_{02}=P_{13}=\{(24),(34)\}$.\\

\noindent
In this case, by Lemma \ref{lem:woo}, 
$Q_{01}=\{(a_0a_1),(a_2a_3)\},
Q_{23}=\{(a_0a_3),$ $(a_1a_2)\},
Q_{24}=Q_{34}=\{(a_0a_2), (a_1a_3)\}$, and
$Q_{02}=Q_{03}=Q_{04}=Q_{12}=Q_{13}=Q_{14}=\emptyset$.
\end{itemize}

\medskip

We only analyze (that is, derive a contradiction from) (a). The cases
(b), (c), and (d) are handled analogously.

Since $Q_{02}=Q_{03}=Q_{12}=Q_{13}=Q_{24}=Q_{34}=\emptyset$, it follows that $\rot{D}{2}$ and
$\rot{D}{3}$ are equal to each other, and equal to the reverse of
each of $\rot{D}{0}$, $\rot{D}{1}$, and 
 $\rot{D}{4}$. Thus 
$\rot{D}{0}=\rot{D}{1}=\rot{D}{4}$. Since 
$Q_{01}=\{(a_0a_1),(a_2a_3)\}$ and $Q_{04}=\{(a_0a_2),(a_1a_3)\}$, 
it follows that in $\rot{D}{0}$: (i) $a_0$ and $a_1$ must be adjacent;
(ii) $a_2$ and $a_3$ must be adjacent; (iii) $a_0$ and $a_2$ must be
adjacent; and (iv) 
$a_1$ and $a_3$ must be adjacent. Thus
$\rot{D}{0}$ is either $(a_0 a_2 a_3 a_1)$ or 
$(a_0 a_1 a_3 a_2)$. Now since
$Q_{23}=\{(a_0a_3),(a_1a_2)\}$, it follows that 
in $\rot{D}{2}$ (and hence in
its reverse $\rot{D}{0}$) we have that
$a_0$ is adjacent to $a_3$, and that  $a_1$
is adjacent to $a_2$.  But this is impossible, since in neither $(a_0 a_2 a_3
a_1)$ nor $(a_0 a_1 a_3 a_2)$ any of these
adjacencies occurs.
\end{proof}

\section{Properties of cores. I. Forbidden subgraphs.}\label{sec:someprrk1}

We recall that the {\em core} of a clean drawing $D$ of $K_{5,n}$ is the
subgraph 
$\PKey{D}$ of $\Key{D}$ that consists of all the vertices of $\Key{D}$
and the edges of $\Key{D}$ with label $1$. Note that while $\Key{D}$
is obviously connected, $\PKey{D}$ may be disconnected. 
As all edges of a
core are labelled $1$, we sometimes omit the reference to the edge
labels altogether when working with $\PKey{D}$.


Our first result on the structure of cores is a workhorse for the next
few sections.

\begin{claim}\label{cla:ch}
If $\pi_1, \pi_2$ and $\pi_3$ are distinct rotations for white vertices in a drawing of $K_{5,n}$, then there exists at
most one rotation $\pi_0$ such that there is an antiroute of size $1$
from $\pi_0$ to each of $\pi_1, \pi_2$, and $\pi_3$.
\end{claim}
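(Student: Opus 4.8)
The plan is to read the statement the way it is used for cores: a common neighbour $\pi_0$ of $\pi_1,\pi_2,\pi_3$ in $\PKey{D}$ is realised by an actual white vertex $a_0$ of the drawing $D$ with $\ucr{D}{a_0,a_i}=1$, so $\pi_0$ is not an arbitrary cyclic permutation but one that coexists with $\pi_1,\pi_2,\pi_3$ in $D$. Accordingly I would argue by contradiction: suppose there are two distinct rotations $\pi_0\neq\pi_0'$, each joined to every one of $\pi_1,\pi_2,\pi_3$ by an antiroute of size $1$. Placing white vertices $a_0,a_0',a_1,a_2,a_3$ in $D$ with $\rot{D}{a_0}=\pi_0$, $\rot{D}{a_0'}=\pi_0'$ and $\ucr{D}{a_0,a_i}=\ucr{D}{a_0',a_i}=1$ for $i=1,2,3$, I would apply Lemma~\ref{lem:woo} to the induced drawing on these five white vertices. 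This yields the black rotations $\gamma_0,\dots,\gamma_4=\rot{D}{0},\dots,\rot{D}{4}$ together with the two compatible families of antiroutes $P_{ij}$ (between white rotations) and $Q_{k\ell}$ (between black rotations), linked by the rule that $(a_i a_j)\in Q_{k\ell}$ if and only if $(k\ell)\in P_{ij}$.

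The key step is to exploit that the relevant antiroutes have size $1$. For each $i$ the antiroute $P_{0i}$ is a single transposition $\{(k_i\ell_i)\}$, and similarly $P_{0'i}=\{(k_i'\ell_i')\}$. By the compatibility rule the transposition $(a_0 a_i)$ then lies in exactly one of the $Q_{k\ell}$, namely $Q_{k_i\ell_i}$, and its presence there forces $a_0$ and $a_i$ to be \emph{adjacent} in the black rotations $\gamma_{k_i}$ and $\gamma_{\ell_i}$ — this is precisely the adjacency deduction carried out repeatedly in the proofs of Propositions~\ref{pro:4claw} and~\ref{pro:chimuelo}. Thus each of the three size-$1$ antiroutes pins down an adjacency of $a_0$ (and, in the same way, of $a_0'$) to one of $a_1,a_2,a_3$ in specified black rotations, all relative to the one fixed family $\gamma_0,\dots,\gamma_4$.

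Finally I would deduce uniqueness. Since $\pi_0=(k_1\ell_1)\,\overline{\pi_1}$ is recovered from $\pi_1$ and the single transposition $P_{01}$, it suffices to show that the pairs $(k_i\ell_i)$ are \emph{forced} by the fixed black rotations and the fixed targets $\pi_1,\pi_2,\pi_3$; a second admissible placement giving $\pi_0'$ would then demand a different, incompatible family of black rotations, contradicting that $\gamma_0,\dots,\gamma_4$ are shared by $a_0$ and $a_0'$. The main obstacle — the only genuinely delicate point — is checking that three adjacency constraints admit at most one cyclic position for $a_0$, and I expect to settle it by a short finite case analysis of how the three transpositions $(a_0 a_i)$ may be distributed among the $Q_{k\ell}$ relative to the $P_{ij}$ already fixed by $\pi_1,\pi_2,\pi_3$, exactly in the style of the enumerations (a)--(d) in Propositions~\ref{pro:4claw} and~\ref{pro:chimuelo}. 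It is worth emphasizing that the coexistence of $\pi_0$ with $\pi_1,\pi_2,\pi_3$ inside a single drawing is indispensable: if one only asked for the bare existence of size-$1$ antiroutes, three cyclic permutations can share several common size-$1$ antineighbours, so the whole force of the statement comes from the common black rotations supplied by Lemma~\ref{lem:woo}.
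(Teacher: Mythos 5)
Your proposal takes a genuinely different route from the paper, and it contains a real gap; moreover its guiding premise is mistaken. The paper proves Claim~\ref{cla:ch} as a \emph{purely combinatorial} statement about cyclic permutations of $\{0,1,2,3,4\}$: if $\pi_0$ and $\pi_4$ were two distinct common size-$1$ antineighbours of $\pi_1,\pi_2,\pi_3$, then each $j$ yields a route $P_{04}(j)$ of size $2$ from $\pi_0$ to $\pi_4$; a short analysis of whether the two transpositions of such a route involve four or three symbols shows that in every case at most two transpositions are admissible as a first step at $\pi_0$, so by pigeonhole two of $\overline{\pi_1},\overline{\pi_2},\overline{\pi_3}$ coincide, contradicting distinctness. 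No drawing, no black rotations, and no appeal to Lemma~\ref{lem:woo} is needed. Consequently your closing assertion --- that without the ambient drawing ``three cyclic permutations can share several common size-$1$ antineighbours'' --- is false, and the coexistence inside a drawing is not what makes the statement work.

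The concrete gaps are these. First, the hypothesis of the Claim does not hand you white vertices $a_0,a_0'$ with $\rot{D}{a_0}=\pi_0$, $\rot{D}{a_0'}=\pi_0'$ and $\ucr{D}{a_0,a_i}=\ucr{D}{a_0',a_i}=1$: it only asserts the existence of antiroutes of size $1$, which is a condition on the cyclic permutations alone. ``Placing'' white vertices with prescribed rotations and prescribed pairwise crossing numbers presupposes that a drawing realizing that labelled configuration exists, and deciding exactly that question is the hard content of Propositions~\ref{pro:4claw} and~\ref{pro:chimuelo}; you cannot assume it. (In the paper's applications the relevant rotations do happen to be realized and joined by label-$1$ edges, so a weaker drawing-dependent version would suffice there, but that is not the statement to be proved, and the stronger version is what the paper actually uses as a black box.) Second, even granting your setup, the decisive step --- the finite case analysis showing that the three adjacency constraints pin down at most one cyclic position for $a_0$ relative to the shared black rotations --- is only announced (``I expect to settle it''), not carried out, and that is precisely where all of the work lies. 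As it stands the proposal is a plan rather than a proof; the paper's own argument is both shorter and strictly more general.
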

\begin{proof}
\tred{
By way of contradiction, suppose that there exist distinct vertices $\pi_0,
\pi_1,\pi_2,\pi_3,\pi_4$ and antiroutes of size $1$
from $\pi_i$ to $\pi_1,\pi_2$, and $\pi_3$, for $i=0$ and
$4$. 
For $j=1,2,3$ the antiroutes from $\pi_0$ and $\pi_4$ to $\pi_j$ induce a
route $P_{04}(j)$ of size two from $\pi_0$ to $\pi_4$. Assume without loss of generality that $\pi_0=(01234)$.}
\tred{
Suppose that for some $j$, the transpositions in $P_{04}(j)$ involve (in total) four distinct elements in
$\{0,1,2,3,4\}$. It is immediately checked that this implies that
$P_{04}(j)$ is the only route of size $2$ from $\pi_0$ to $\pi_4$, and
that this in turn implies that at least two of $\pi_1, \pi_2$, and
$\pi_3$ are equal to each other, a contradiction. Thus each of $P_{04}(1),
P_{04}(2)$, and $P_{04}(3)$ involve fewer than four elements in
$\{0,1,2,3,4\}$. None of these routes can involve only two
elements (since they have size $2$, and $\pi_0\neq \pi_4$), and so we
conclude that each of $P_{04}(1), P_{04}(2)$, and $P_{04}(3)$ involve exactly three
elements in $\{0,1,2,3,4\}$. In particular, $P_{04}(1)$ must equal either $\{(k,k+1),(k,k+2)\}$
or $\{(k+1,k+2),$ $(k,k+2)\}$, for some
$j\in\{0,1,2,3,4\}$ (operations are modulo $5$; we note that we
deviate from the usual notation and separate the elements of a
transposition with a comma, for readability purposes). We derive a
  contradiction assuming that the first possibility holds; the other
  possibility is handled analogously.
Relabelling
$0,1,2,3$, and $4$, if needed, we may assume that
$P_{04}(1)=\{(01),(02)\}$. Thus $\pi_4$ is  $(03412)$. It is readily verified that the only routes of size
$2$ from $\pi_0=(01234)$ to $\pi_4=(03412)$ are $P_{04}(1)=\{(01),(02)\}$ and $\{(03),(04)\}$. This in turn
immediately implies that the antiroutes of size $1$ from $\pi_0$ to $\pi_1$,
$\pi_2$, and $\pi_3$ are either $\{(01)\}$ or $\{(04)\}$, since the transpositions
$(02)$ and $(03)$ cannot be applied to $\pi_0$. But then we arrive from $\pi_0$
to two elements in  $\{\overline{\pi_1},\overline{\pi_2},\overline{\pi_3}\}$ by applying the same transposition; that is, 
$\pi_i=\pi_j$ for some $i,j\in\{1,2,3\}$, $i\neq j$, a contradiction. }
\end{proof}

\begin{proposition}\label{pro:b}
Let $D$ be an optimal drawing of $K_{5,n}$.
Suppose that $\Key{D}$ is $\{0,4\}$-free. Then:
\begin{enumerate}
\item\label{it:c2}  $\PKey{D}$ does not contain $K_{2,3}$ as a subgraph.
\item\label{it:c1} $\PKey{D}$ has maximum degree at most $3$. 
\item\label{it:c3} $\PKey{D}$ does not contain as a subgraph the graph
obtained from $K_4$ by subdividing exactly once each of the edges in a
$3$-cycle (see Fig.~\ref{fig:k32}).
\end{enumerate}
\end{proposition}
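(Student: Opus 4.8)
The plan is to route all three parts through the dictionary of Lemma~\ref{lem:wo}: a label-$1$ edge of $\PKey{D}$ is the same thing as a pair of rotations admitting an antiroute of size $1$. First I would record a \emph{filling-in} observation that is used throughout: if a vertex $\pi_0$ of $\PKey{D}$ is joined by label-$1$ edges to two distinct vertices $\pi_i,\pi_j$, then by Proposition~\ref{pro:forkeys} the label $\lambda_{ij}$ satisfies $1+1+\lambda_{ij}\ge 4$ and is even, hence $\lambda_{ij}$ is even and at least $2$; since $\Key{D}$ is $\{0,4\}$-free this forces $\lambda_{ij}=2$. In particular no two neighbors of a vertex are adjacent in the core (so $\PKey{D}$ is triangle-free), and any two core-neighbors of a common vertex are joined by a label-$2$ edge of $\Key{D}$. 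Part~(1) is then immediate from Claim~\ref{cla:ch}: a copy of $K_{2,3}$ with parts $\{u,v\}$ and $\{x,y,z\}$ would exhibit two \emph{distinct} rotations $u,v$, each admitting an antiroute of size $1$ to each of the distinct rotations $x,y,z$, contradicting that at most one rotation can have this property.

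For part~(2) I would argue by contradiction: suppose a vertex $\pi_0$ has four core-neighbors $\pi_1,\pi_2,\pi_3,\pi_4$. By the filling-in observation these four are pairwise joined by label-$2$ edges, so $\pi_0$ together with any three of them induces in $\Key{D}$ exactly the labelled pattern of Figure~\ref{fig:sk13} (three label-$1$ spokes from $\pi_0$ onto a label-$2$ triangle). Passing to the induced drawing of $K_{5,4}$ on the corresponding four white vertices and normalizing $\pi_0=(01234)$ by relabelling $0,1,2,3,4$, the four neighbors must lie among the ten rotations at antidistance $1$ from $(01234)$. The goal is to show that one cannot select four of these ten rotations that are pairwise at label $2$ without producing a triple whose rotations relabel exactly to those of Figure~\ref{fig:sk13}, after which Proposition~\ref{pro:4claw} delivers the contradiction. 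This is genuinely a statement about the \emph{fourth} neighbor: three pairwise-label-$2$ neighbors do occur (the degree-$3$ vertices of $\Bur$), so the obstruction cannot be a statement about triples alone.

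Part~(3) I would reduce to Proposition~\ref{pro:chimuelo} in the same spirit. Writing the graph of Figure~\ref{fig:k32} with main vertices $u_1,u_2,u_3$ (the triangle that was subdivided), apex $w$ adjacent to $u_1,u_2,u_3$, and a subdivision vertex $v_{12}$ on the $u_1u_2$ edge, the four edges $u_1v_{12}$, $v_{12}u_2$, $u_2w$, $wu_1$ form a label-$1$ $4$-cycle in $\PKey{D}$. Applying the filling-in observation to $w$ (and to $v_{12}$) shows the diagonal $u_1u_2$ has label $2$, and applying it to $u_1$ (and to $u_2$) shows the diagonal $v_{12}w$ has label $2$. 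Thus $\{u_1,v_{12},u_2,w\}$ induces precisely the labelled pattern of Figure~\ref{fig:chim} (a label-$1$ $4$-cycle with both diagonals of label $2$), and Proposition~\ref{pro:chimuelo} gives the contradiction.

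The hard part will be the normalization step common to parts~(2) and~(3): because the statements of Propositions~\ref{pro:4claw} and~\ref{pro:chimuelo} fix particular rotations, I must show that the $K_4$-pattern produced by the core forces the four rotations (after a relabelling of $0,1,2,3,4$) into exactly the configurations of Figures~\ref{fig:sk13} and~\ref{fig:chim}. This amounts to a finite but delicate case analysis over the antidistance-$1$ neighbors of $(01234)$ together with their admissible size-$2$ antiroutes; for part~(2) it is precisely here that the boundary between the permitted degree $3$ and the forbidden degree $4$ has to be located, and I expect it to be the most technical portion of the argument.
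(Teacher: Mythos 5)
Your overall strategy coincides with the paper's: part (1) via Claim~\ref{cla:ch} and Lemma~\ref{lem:wo} is complete and correct, and parts (2) and (3) are meant to be reduced to Propositions~\ref{pro:4claw} and~\ref{pro:chimuelo}. But that reduction is exactly where the content of those two parts lives, and you have deferred it (``the most technical portion'') rather than carried it out, so as written the proposal does not prove (2) or (3). Two concrete problems. First, there are \emph{five}, not ten, rotations at antidistance $1$ from $(01234)$: a transposition can only be applied to a cyclic permutation when its two elements are cyclically adjacent, so the admissible transpositions are $(01),(12),(23),(34),(40)$ and the corresponding neighbours are $(01432),(04312),(04231),(03421),(03214)$. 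Once this is in place, part (2) closes quickly: four neighbours omit exactly one of the five transpositions, a cyclic relabelling $j\mapsto j+k$ (which fixes $(01234)$) lets you assume the omitted one is $(23)$, and the surviving set then contains the triple $(01),(12),(34)$, whose rotations are exactly those of Figure~\ref{fig:sk13}; Proposition~\ref{pro:4claw} finishes. Your phrase ``any three of them induces exactly the labelled pattern of Figure~\ref{fig:sk13}'' is not usable, because Proposition~\ref{pro:4claw} forbids a specific rotation configuration, not a labelled pattern: the path-type triple $(01),(12),(23)$ carries the same labels and \emph{is} realizable (it is what occurs at the degree-$3$ vertices of $\Bur$), so the combinatorial point is precisely that four neighbours must always contain a triple of the forbidden, non-path type.

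Second, part (3) as you state it is unsound, not merely incomplete. A label-$1$ four-cycle whose two diagonals carry label $2$ is \emph{not} automatically the configuration of Figure~\ref{fig:chim}: by Proposition~\ref{pro:he} the core of the optimal drawing $D_{r,0}$ is exactly such a four-cycle, so if the ``labelled pattern'' alone contradicted Proposition~\ref{pro:chimuelo}, you would have shown that no core contains a four-cycle, contradicting Lemma~\ref{lem:j}. What distinguishes the forbidden configuration is the extra degree-$3$ structure: the apex $w$ has three core-neighbours, and since Proposition~\ref{pro:4claw} excludes the triple of Figure~\ref{fig:sk13}, these neighbours must form, up to a relabelling fixing $(01234)$, the path $(01)$--$(12)$--$(23)$, i.e.\ they are $(01432),(04312),(04231)$. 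One must then take the subdivision vertex lying between the two \emph{endpoints} $(01432)$ and $(04231)$ of this path---not an arbitrary $v_{12}$---and observe that it is forced to be $(03241)$, the unique cyclic permutation other than $(01234)$ at antidistance $1$ from both; only then do the four rotations match Figure~\ref{fig:chim}. These normalization steps are the proof; without them the argument is a plan rather than a proof.
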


\begin{figure}
\begin{center}
\includegraphics[width=3cm]{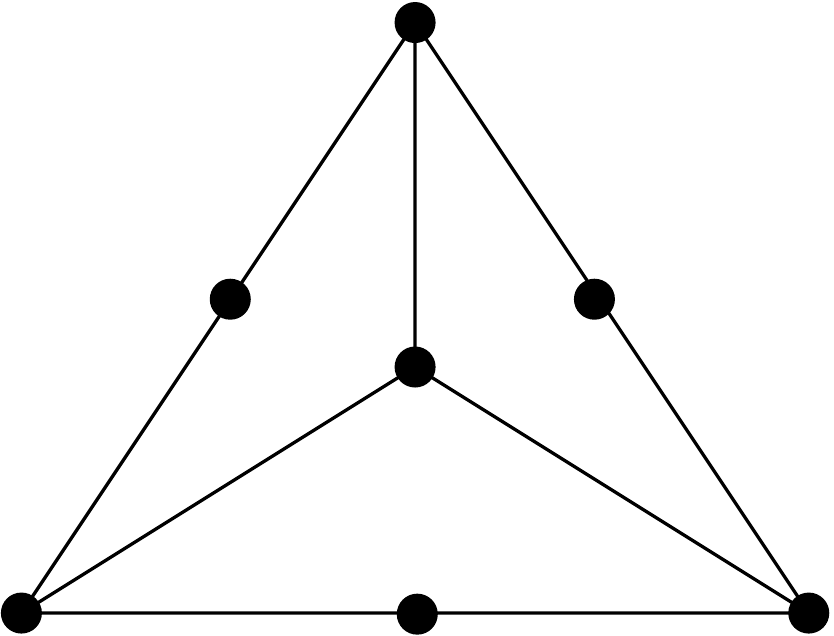}
\caption{The graph obtained by subdividing exactly once each of the
  edges in a $3$-cycle of $K_4$.}
\label{fig:k32}
\end{center}
\end{figure}

\begin{proof}
We start by noting that (\ref{it:c2}) follows immediately by 
Claim~\ref{cla:ch} and Lemma~\ref{lem:wo}.

Suppose now by way of contradiction that $\PKey{D}$ has a vertex $\pi_0$
of degree at least $4$. Thus $\PKey{D}$ has distinct vertices $\pi_1,
\pi_2,\pi_3,\pi_4$ such that the edge joining $\pi_0$ to $\pi_i$ has
label $1$, for $i=1,2,3,4$. Thus, for $i=1,2,3,4$, there exists an
antiroute from $\pi_0$ to $\pi_i$ of size $1$. \tred{Without loss of generality we may assume $\pi_0=(01234)$.}
The five cyclic rotations that have an antiroute of size
$1$ to $\pi_0$ are $(01432), (03214), (03421),(04312)$, and
$(04231)$. By performing a relabelling $j\to j+1$ on $\{0,1,2,3,4\}$ for some
$j\in\{0,1,2,3,4\}$ (with operations modulo $5$) if needed (note that 
the cyclic permutation $\pi_0=(01234)$ is left unchanged in such a relabelling),
we may assume without loss of generality that
\tred{$\{\pi_1,\pi_2,\pi_3,\pi_4\} = \{(01432), (03214),(03421),(04312)\}$.} By exchanging $\pi_1,\pi_2,\pi_3,\pi_4$ if needed, we may
assume that $\pi_1 =
(01432), \pi_2=(04312)$, and $\pi_3=(03421)$.

Since $\Key{D}$ is $\{0,4\}$-free, it
follows by Proposition~\ref{pro:forkeys} that the edge joining $\pi_i$ to $\pi_j$ has label $2$, for
$i,j\in\{1,2,3\}, i\neq j$.  Thus, for $i,j=1,2,3, i\neq j$, there exists an
antiroute from $\pi_i$ to $\pi_j$ of size $2$. Thus $\Key{D}$ contains as
 a subgraph  the graph in Figure~\ref{fig:sk13}, contradicting
Proposition~\ref{pro:4claw}. This proves (\ref{it:c1}).

We finally prove (\ref{it:c3}). 
Suppose by way of contradiction that $\PKey{D}$ contains as a subgraph
the graph obtained from $K_4$ by subdividing once each of the edges in
a $3$-cycle (Fig.~\ref{fig:k32}). Let $\rho_0$ be the ``central
vertex'' in Fig.~\ref{fig:k32}, that is, the only vertex in $\PKey{D}$
adjacent to three degree-$3$ vertices, and let $\rho_1, \rho_3, \rho_4$
denote these three vertices.
An argument similar to the one in the second paragraph of this proof
shows the following: if $\rho_0=(01234)$ is a vertex adjacent to
vertices $\rho_1,\rho_3,\rho_4$ in $\PKey{D}$, then 
we may assume (that is, perhaps after a relabelling of $0,1,2,3,4$),
that 
$\rho_1= (01432), \rho_3=(04231)$, and $\rho_4=(04312)$.
Now let $\rho_2$ be the vertex adjacent to
$\rho_1$ and $\rho_3$ in $\PKey{D}$. Thus it follows that in $\Key{D}$,
the edges joining $\rho_0$ and $\rho_1$, $\rho_0$
and $\rho_3$, $\rho_1$ and $\rho_2$, and $\rho_2$ and $\rho_3$ are labelled
$1$. 
By Proposition~\ref{pro:forkeys}, the edge joining $\rho_1$
  and $\rho_3$, as well as the edge joining $\rho_0$ and $\rho_2$  have  even labels, which must be $2$ since
$\Key{D}$ is $\{0,4\}$-free.
Now it is easy to verify that the only cyclic permutation other than
$\rho_0$ which has antiroutes of size $1$ to both $\rho_1$ and $\rho_3$
is
$(03241)$. Thus $\rho_2$ must be $(03241)$. 
But then the
subgraph of $\Key{D}$ induced by $\rho_0,\rho_1,\rho_2$, and $\rho_3$ 
is isomorphic to the graph in Figure~\ref{fig:chim}, contradicting
Proposition~\ref{pro:chimuelo}.
\end{proof}

\section{Properties of cores. II. Structural properties.}\label{sec:someprrk2}

\begin{proposition}\label{pro:c}
Let $D$ be an optimal drawing of $K_{5,n}$, with $n$
even. Suppose that $\Key{D}$ is \tred{$\{0,4\}$-free}. Then:
\begin{enumerate}
\item\label{it:c5} $\PKey{D}$ is bipartite.
\item\label{it:c4} $\PKey{D}$ is connected. 
\end{enumerate}
\end{proposition}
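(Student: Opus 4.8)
The plan is to hang both parts on a single parity invariant of cyclic permutations. For $\pi\in\Pi$, read it starting at the symbol $0$ to get a word $0\,b_1b_2b_3b_4$, and let $\phi(\pi)\in\{+1,-1\}$ be the sign of the permutation $b_1b_2b_3b_4$ of $\{1,2,3,4\}$; equivalently, $\phi(\pi)$ records which of the two $A_5$-conjugacy classes of $5$-cycles the $5$-cycle underlying $\pi$ lies in. Two facts drive everything: (A) applying one transposition to $\pi$ reverses $\phi$ (conjugating a $5$-cycle by a transposition swaps the two $A_5$-classes), and (B) $\phi(\overline{\pi})=\phi(\pi)$ (a $5$-cycle and its inverse are conjugate inside $A_5$). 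Hence, if there is an antiroute of size $k$ from $\pi$ to $\pi'$ — that is, $\overline{\pi'}$ is reached from $\pi$ by $k$ transpositions — then $\phi(\pi')=(-1)^k\phi(\pi)$. In short, the parity of any antiroute between $\pi$ and $\pi'$ is exactly the parity of $\phi(\pi)\phi(\pi')$.

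For part (\ref{it:c5}), every edge of $\PKey{D}$ has label $1$, so by Lemma~\ref{lem:wo} its endpoints admit an antiroute of size $1$ and therefore lie in opposite $\phi$-classes. Thus $\phi$ is a proper $2$-colouring of $\PKey{D}$, which is therefore bipartite. (As a sanity check, Proposition~\ref{pro:forkeys} already forbids triangles, since three mutual label-$1$ edges would sum to $3<4$.)

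The same bookkeeping sets up part (\ref{it:c4}). By Lemma~\ref{lem:wo} the label of an edge of $\Key{D}$ is the size of an antiroute between its endpoints, so its parity equals that of $\phi$ on the two endpoints. Since $\Key{D}$ is $\{0,4\}$-free, a same-class pair carries label exactly $2$ and a different-class pair carries label $1$ or $3$. Writing $X,Y$ for the two $\phi$-classes present in $\Rot{D}$, within-class labels are $2$, so the coefficient $c_{ij}:=2-\lambda_{ij}\in\{+1,0,-1\}$ vanishes on same-class pairs; by Proposition~\ref{pro:tempev} fix a positive solution with $\sum t_i=n$, so that $E(\pi_i,\Key{D})$ reads $2t_i=\sum_{j\in Y}c_{ij}t_j$ for $i\in X$ (and symmetrically). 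Positivity forces $\sum_{\lambda_{ij}=1}t_j=2t_i+\sum_{\lambda_{ij}=3}t_j>0$, so every vertex has a neighbour in $\PKey{D}$, whence every component meets both $X$ and $Y$. Now suppose $\PKey{D}$ had components $G_1,\dots,G_p$ with $p\ge2$, where $G_\alpha$ has parts $X_\alpha\subseteq X$, $Y_\alpha\subseteq Y$. Since no label-$1$ edge crosses components, every cross pair joining $X_\alpha$ to $Y\setminus Y_\alpha$ is a cross pair that is not a core edge, hence a label-$3$ edge ($c_{ij}=-1$). For $i\in X_\alpha$ this gives $2t_i=\sum_{j\in Y_\alpha}c_{ij}t_j-\sum_{j\in Y\setminus Y_\alpha}t_j$; summing over $i\in X_\alpha$ and bounding $\sum_{i\in X_\alpha,\,j\in Y_\alpha}c_{ij}t_j\le|X_\alpha|\sum_{j\in Y_\alpha}t_j$ yields
\[
0<2\sum_{i\in X_\alpha}t_i\ \le\ |X_\alpha|\Bigl(2\sum_{j\in Y_\alpha}t_j-\sum_{j\in Y}t_j\Bigr).
\]
Positivity of the left side forces $\sum_{j\in Y_\alpha}t_j>\tfrac12\sum_{j\in Y}t_j$ for \emph{every} component, so $\sum_{j\in Y}t_j=\sum_\alpha\sum_{j\in Y_\alpha}t_j>\tfrac{p}{2}\sum_{j\in Y}t_j\ge\sum_{j\in Y}t_j$, a contradiction. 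Hence $p=1$ and $\PKey{D}$ is connected.

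I expect the only real obstacle to be the parity bookkeeping: verifying (A) and (B), and confirming that $\{0,4\}$-freeness is precisely what pins same-class labels to $2$ and cross labels to $\{1,3\}$. This dichotomy is essential, since the connectivity argument collapses without the guarantee that the missing cross pairs between distinct components are genuine label-$3$ edges (giving the crucial $c_{ij}=-1$ term).
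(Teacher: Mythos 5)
Your proof is correct, and both halves take a genuinely different route from the paper's. For bipartiteness, the paper never leaves the key: it applies Proposition~\ref{pro:forkeys} repeatedly along a putative odd cycle to force the label of $\pi_0\pi_j$ to be $2$ for every even $j$, contradicting the label $1$ on the closing edge. You instead exhibit an explicit proper $2$-colouring by the $A_5$-conjugacy class of the rotation; your facts (A) and (B) check out (the centralizer of a $5$-cycle lies in $A_5$, so conjugation by a transposition swaps the two classes, and $(14)(23)$ conjugates $(01234)$ to its reverse), and via Lemma~\ref{lem:wo} this invariant is essentially the algebraic source of the parity fact behind Proposition~\ref{pro:at4}, so the two arguments are cousins, with yours making the $2$-colouring global and explicit. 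The divergence is sharper for connectedness: the paper argues by counting crossings --- splitting the white vertices according to a component $H$, bounding $\cru(D)\ge Z(5,r)+Z(5,s)+2rs$, upgrading this to a strict inequality via Proposition~\ref{pro:at4}, and then running a parity case analysis on $r$ to beat $Z(5,n)$ --- whereas you work entirely inside the linear system $\LL(\Key{D})$ of Proposition~\ref{pro:tempev}. Your key observation, that a cross-class pair in distinct components must carry label $3$ (odd by the parity invariant, not $1$ because no core edge crosses components), correctly pins down $c_{ij}=-1$ there, and the averaging step forcing $\sum_{j\in Y_\alpha}t_j>\tfrac12\sum_{j\in Y}t_j$ for every component is sound and yields the contradiction for $p\ge 2$. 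What each approach buys: the paper's crossing count is self-contained and does not need the bipartition, while yours avoids the even/odd case split and the strict-inequality lemma entirely, at the cost of invoking the full strength of Proposition~\ref{pro:tempev} (which needs $n$ even and optimality --- both available here) and of establishing the class/label dichotomy first. As a bonus, your argument that every vertex of $\Key{D}$ has a core neighbour cleanly disposes of the degenerate cases (a single rotation, or all rotations in one class) that the paper handles via the observation that $\Key{D}$ must contain an edge labelled $1$.
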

\begin{proof}
Suppose that $C=(\pi_0,\pi_1,\pi_2,\ldots,\pi_{r-1},\pi_r,\pi_0)$ is an
odd cycle in
$\PKey{D}$. 
It
follows from Proposition~\ref{pro:forkeys} that 
\tred{$\pi_0\pi_{2}$} must have an even label in $\Key{D}$, since $\pi_0\pi_1$ and
$\pi_1\pi_2$ are both labelled $1$ in $\Key{D}$; now this even label
must be $2$, since $\Key{D}$ is $\{0,4\}$-free. Similarly,  since
$\pi_2\pi_3$ and $\pi_3\pi_4$ are also labelled $1$ in $\Key{D}$, then
 $\pi_2\pi_4$ must also be labelled $2$ in $\Key{D}$. Now since both
$\pi_0 \pi_2$ and $\pi_2\pi_4$ have label $2$ in $\Key{D}$, it follows
that $\pi_0 \pi_4$ also has label $2$ in $\Key{D}$. 
By repeating this
argument we find that $\pi_0\pi_j$ must have label $2$ in
$\Key{D}$ for every even $j$. In particular, $\pi_0 \pi_r$ must have
label $2$, contradicting that $\pi_0\pi_r$ is in $\PKey{D}$ (that is,
that the label of $\pi_0\pi_r$ in $\Key{D}$ is $1$). Thus $\PKey{D}$
cannot have an odd cycle. This proves (\ref{it:c5}).

To prove (\ref{it:c4}) we assume, by way of contradiction, 
that $\PKey{D}$ is not connected.

We start by observing that $\Key{D}$ must have at least one edge
labelled $1$. Indeed, otherwise every edge  $\Key{D}$ has label of at
least $2$, and so $\cru{(D)} \ge 2{n\choose 2}
= n(n-1) > Z(5,n)$, contradicting the optimality of $D$. 

Thus there exists a component $H$ of $\PKey{D}$ with at least $2$
vertices. Let $U$ be the set of white vertices whose rotation is a
vertex in
$H$, and let $V$ be all the other white vertices. 
Let $r:=|U|$ and $s:=|V|$. 
Note that 
\begin{align}
\nonumber
\cru{(D)} &=
\sum_{\stackrel{a_i,a_j\in U,}{a_i\neq a_j}} \ucr{D}{a_i,a_j}
+
\sum_{\stackrel{a_i,a_j\in V,}{a_i\neq a_j}} \ucr{D}{a_i,a_j}
+
\sum_{a_i\in U, a_j\in V} \ucr{D}{a_i,a_j}\\
\label{eq:some1}
&\ge Z(5,r) + Z(5,s) + 2rs,
\end{align}
since every vertex of $U$ is joined to every vertex of $V$ by an edge
with a label $2$ or greater.

We claim that, moreover, strict inequality must hold in
\eqref{eq:some1}. To see this, first we note that, since $H$ has at least $2$
vertices, it follows that there exist white vertices $a_k,a_\ell$
whose rotations are
in $H$ and such that $\ucr{D}{a_k,a_\ell} = 1$. Since by assumption
$\PKey{D}$ is not connected, there is a vertex $\pi$ in $\PKey{D}$ not in
$H$. Let $a_i$ be a white vertex such that $\rot{D}{a_i} = \pi$. 
Now $\ucr{D}{a_k,a_i}$ and  $\ucr{D}{a_\ell,a_i}$ are both at least 
$2$. However, 
we cannot have $\ucr{D}{a_k,a_i}$ and \tred{$\ucr{D}{a_\ell,a_i}$} both equal to
$2$, since then $\ucr{D}{a_k,a_\ell}=1$ would contradict
Proposition~\ref{pro:at4}.
Thus either $\ucr{D}{a_k,a_i}$ or $\ucr{D}{a_\ell,a_i}$ is at least
$3$. This proves that  \tred{ Inequality \eqref{eq:some1}} must
be strict, that
is,
\begin{equation}
\cru{(D)} > Z(5,r) + Z(5,s) + 2rs.
\label{eq:some2}
\end{equation}

Suppose that $r$ (and consequently, also $s$) is even. 
In this case, 
since $Z(5,m)=m(m-2)$ for even $m$,
using \eqref{eq:some2} we obtain
$\cru{(D)} > r(r-2) + s(s-2) + 2rs = (r+s)(r+s-2)=Z(5,r+s)=Z(5,n)$, 
contradicting the optimality of $D$. 

Suppose finally that $r$ is odd (and so $s$
is odd, since $|U|+|V|=n$ is even). 
Using that $r$ and $s$ are odd, and that
$Z(5,m)=(m-1)^2$ for odd $m$, with \eqref{eq:some2} we obtain $\cru{(D)} 
> (r-1)^2 + (s-1)^2 + 2rs = (r+s)(r+s-2) + 2 = Z(5,r+s)+2 = Z(5,n)+2$,
again contradicting the optimality of $D$. This finishes the proof of
(\ref{it:c4}).
\end{proof}

\section{Properties of cores. III. Minimum degree.}\label{sec:someprrk3}

\begin{proposition}\label{pro:horse}
Let $D$ be an optimal drawing of $K_{5,n}$, with $n$
even. Suppose that $\Key{D}$ is $\{0,4\}$-free. 
Let  $\pi_0, \pi_1, \pi_2, \pi_3$ be a path in
$\PKey{D}$. Suppose that in \tred{$\PKey{D}$}, $\pi_1$ is the only vertex adjacent to both $\pi_0$ and
$\pi_2$, and $\pi_2$ is the only vertex adjacent to both $\pi_1$ and $\pi_3$. 
Then:
\begin{enumerate}
\item\label{it:bigone} every vertex in $\PKey{D}$ is adjacent (in
  $\PKey{D}$) to a vertex in
  $\{\pi_0,\pi_1,$ $\pi_2,\pi_3\}$; and
\item\label{it:bigtwo} $\pi_0$ and $\pi_3$ are adjacent in \tred{$\PKey{D}$}. 
\end{enumerate}
\end{proposition}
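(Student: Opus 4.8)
The plan is to translate everything into rotations and reduce the statement to a bounded, explicit check. The key structural input is that an edge of $\PKey{D}$ records a pair of white vertices at antidistance $1$ (Lemma~\ref{lem:wo}), and that the rotations lying at antidistance $1$ from a fixed rotation form an explicit list of five. Since the statement is invariant under relabelling $0,1,2,3,4$, I would first normalize by setting $\pi_1=(01234)$; the neighbours of $\pi_1$ in $\PKey{D}$ are then among $(01432),(04312),(04231),(03421),(03214)$, and using the cyclic stabilizer of $(01234)$ I would fix $\pi_0=(01432)$. This leaves four possibilities for $\pi_2$, and for each of these the (again five-element) antidistance-$1$ neighbourhood of $\pi_2$ supplies the only candidates for $\pi_3$. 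Throughout, Proposition~\ref{pro:forkeys} and the $\{0,4\}$-freeness of $\Key{D}$ are what let me conclude that the labels appearing are $1$ or $2$ and never $0$, $3$, or $4$.

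For part~(\ref{it:bigtwo}), in each of the four cases I would intersect the antidistance-$1$ neighbourhood of $\pi_0$ with that of $\pi_2$. This intersection always consists of $\pi_1$ together with exactly one further rotation $\rho$, and $\rho$ is also the unique candidate for $\pi_3$ that is adjacent to $\pi_0$. The two uniqueness hypotheses are then used precisely here: the assumption that $\pi_1$ is the only common neighbour of $\pi_0$ and $\pi_2$, and that $\pi_2$ is the only common neighbour of $\pi_1$ and $\pi_3$, combined with Claim~\ref{cla:ch} (three rotations cannot share two common neighbours) and the $K_{2,3}$-freeness of Proposition~\ref{pro:b}(\ref{it:c2}), pin down the rotation of $\pi_3$ and force the edge $\pi_0\pi_3$ to carry label $1$. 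I expect the delicate point to be separating the two meanings of ``common neighbour'': antidistance $1$ is only \emph{necessary} for an edge of $\PKey{D}$, so the hypotheses must be invoked at exactly the spot where the spurious rotation $\rho$ would otherwise masquerade as a second common neighbour.

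For part~(\ref{it:bigone}) I would use that $\PKey{D}$ is connected (Proposition~\ref{pro:c}(\ref{it:c4})), bipartite (Proposition~\ref{pro:c}(\ref{it:c5})), and has maximum degree at most $3$ (Proposition~\ref{pro:b}(\ref{it:c1})). If some vertex were adjacent to none of $\pi_0,\pi_1,\pi_2,\pi_3$, a shortest path to this set would produce a vertex $\sigma$ adjacent to exactly one $\pi_i$ (adjacency to two being ruled out by bipartiteness and by the uniqueness hypotheses) and possessing a further neighbour outside the set. Since $\sigma$ is forced to one of the few explicit rotations adjacent to $\pi_i$, I would enumerate its own antidistance-$1$ neighbours and show, with the no-subdivided-triangle conclusion of Proposition~\ref{pro:b}(\ref{it:c3}) and Claim~\ref{cla:ch}, that no such outside neighbour can exist; hence every external neighbour is a leaf and the path dominates $\PKey{D}$. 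The main obstacle I anticipate is not a single hard idea but the organization of the four-case rotation bookkeeping together with the repeated, carefully-placed use of the uniqueness hypotheses, which is where the argument will demand the most care.
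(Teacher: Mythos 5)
Your proposal attacks both conclusions by local rotation combinatorics (normalizing $\pi_1=(01234)$, enumerating antidistance-$1$ neighbourhoods, and invoking Claim~\ref{cla:ch} and the $K_{2,3}$-freeness), but this cannot close either part, because both conclusions are really consequences of \emph{optimality} through Proposition~\ref{pro:tempev}, which your argument never uses. The decisive gap is in part~(\ref{it:bigtwo}): even if you pin down the rotation of $\pi_3$ completely and verify that it lies at antidistance $1$ from $\pi_0$, that only tells you the label $\lambda_{03}$ \emph{could} be $1$; antidistance $1$ is a necessary condition for an edge of $\PKey{D}$, and Proposition~\ref{pro:forkeys} only forces $\lambda_{03}$ to be odd, so $\lambda_{03}=3$ remains open. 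This is not a technicality you can enumerate away: in the optimal drawing $D_{r,s}$ itself, the rotations $\pi_1=(04231)$ and $\pi_5=(02314)$ are at antidistance $1$ (the single transposition $(12)$ carries $(04231)$ to the reverse of $(02314)$), yet the edge joining them in the key has label $3$. So two rotations at antidistance $1$ genuinely occur with label $3$ in the very drawings this paper classifies, and no amount of rotation bookkeeping will distinguish label $1$ from label $3$ for the pair $\pi_0,\pi_3$.

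Part~(\ref{it:bigone}) has the same problem in a different guise: a vertex $\pi_j$ adjacent to none of $\pi_0,\pi_1,\pi_2,\pi_3$ would carry labels $2,2,3,3$ to them, and this violates no local constraint (not Proposition~\ref{pro:forkeys}, not Claim~\ref{cla:ch}, not the forbidden subgraphs of Proposition~\ref{pro:b}); your shortest-path/domination argument has no contradiction to reach. The paper's proof of both parts is instead a short global computation: using bipartiteness and $\{0,4\}$-freeness one gets $\lambda_{02}=\lambda_{13}=2$, one writes down the four equations $E(\pi_i,\Key{D})$ of $\LL(\Key{D})$ for $i=0,1,2,3$ and sums them to obtain
\[
(\lambda_{03}-1)(t_0+t_3)+\sum_{j>3}\bigl(\lambda_{0j}+\lambda_{1j}+\lambda_{2j}+\lambda_{3j}-8\bigr)t_j=0,
\]
where every coefficient is shown to be nonnegative (here the two uniqueness hypotheses enter, to show $\pi_j$ is adjacent to at most one of the four path vertices, whence two of its labels are $2$ and at least one is $3$). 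Since Proposition~\ref{pro:tempev} supplies a positive integral solution, $\lambda_{03}=1$ follows, and then the residual identity forces $2t_j\le 0$ for any $\pi_j$ adjacent to none of the four. You should rebuild your proof around $\LL(\Key{D})$; the rotation enumeration you describe is not needed for this proposition.
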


\begin{proof}
Let $\pi_0, \pi_1, \ldots, \pi_{r-1}$ be the vertices of $\PKey{D}$
(and of $\Key{D}$ as well). For $i,j\in [r], i\neq j$, let
$\lambda_{ij}$ denote the label of the edge that joins $\pi_i$ to
$\pi_j$ in $\Key{D}$.  Recall that $\PKey{D}$ is bipartite
(Proposition~\ref{pro:c}\eqref{it:c5}). Since
$\pi_0,\pi_1,\pi_2,\pi_3$ is a path in $\Key{D}$, it follows that
$\pi_0$ and $\pi_2$ are in the same chromatic class $A$, and $\pi_1$
and $\pi_3$ are in the same chromatic class $B$. Moreover, since
$\Key{D}$ is $\{0,4\}$-free, it follows from Proposition~\ref{pro:forkeys}
that $\lambda_{ij}=2$ whenever
 $\pi_i$ and $\pi_j$ belong to the same chromatic class.
Thus we have $\lambda_{02}=\lambda_{13}=2$ and (since
$\pi_0,\pi_1,\pi_2,\pi_3$ is a path in $\PKey{D}$)
$\lambda_{01}=\lambda_{12}=\lambda_{23}=1$. It follows that the
equations of $\LL(\Key{D})$ corresponding to $\pi_0,\pi_1,\pi_2$, and
$\pi_3$ are:
\begin{align*}
\begin{matrix}
{\small
\begin{tabular}{c r c r c r c r c r c r}
  $E_0$  : &  $2t_0 $ & $-$ & $ t_1$ &      &       & $+$ &
  $(\lambda_{03}-2)t_3$ & $+$ & $ \sum\limits_{j\in [r], j>3} (\lambda_{0j}-2) t_j$ & $=$ & $0$, \\
  $E_1$  : &  $-t_0 $ & $+$ & $2t_1$ & $-$  & $t_2$ &     &       &
  $+$ & $ \sum\limits_{j\in[r], j>3} (\lambda_{1j}-2) t_j$ & $=$ & $0$, \\
  $E_2$  : &          & $-$ & $ t_1$ & $+$  &$2t_2$ & $-$ & $t_3$ & $+$ & $ \sum\limits_{j\in[r],j>3} (\lambda_{2j}-2) t_j$ & $=$ & $0$, \\
  $E_3$  : &  $ (\lambda_{03}-2)t_0 $ &     &        & $-$  & $t_2$ & $+$ &$2t_3$ & $+$ & $ \sum\limits_{j\in[r],j>3} (\lambda_{3j}-2) t_j$ & $=$ & $0$, \\
\end{tabular}
}
\end{matrix}
\end{align*}

\noindent where for simplicity we define $E_i:=E(\pi_i,\Key{D})$ for
$i\in \{0,1,2,3\}$. 
Summing up these
four linear equations we obtain
\begin{equation}\label{eq:l16}
(\lambda_{03}-1)t_0+ (\lambda_{03}-1)t_3+\sum_{j\in[r],j>3} (\lambda_{0j}+\lambda_{1j}+\lambda_{2j}+\lambda_{3j}-8) t_j=0
\end{equation}
We claim all the coefficients in \eqref{eq:l16} are nonnegative. First
we note that since $\lambda_{03} \ge 1$, then the coefficients of
$t_0$ and $t_3$ are indeed nonnegative. For the remaining coeficients,
consider any vertex $\pi_j$ in $\Key{D}$, with $j>3$. Since $\Key{D}$
is $\{0,4\}$-free, it follows that $\lambda_{ij}\ge 1$ for every $i\in
\{0,1,2,3\}$.

Since $\PKey{D}$ is bipartite, it follows that $\pi_j$ cannot be
adjacent (in $\PKey{D}$) to two elements in
$\{\pi_0,\pi_1,\pi_2,\pi_3\}$ whose indices have distinct parity. Now
it follows by hypothesis that $\pi_j$ cannot be adjacent to both
$\pi_0$ and $\pi_2$, or to $\pi_1$ and $\pi_3$. Thus $\pi_j$ is
adjacent to at most  one of $\pi_0,\pi_1,\pi_2$ and $\pi_3$ in
$\PKey{D}$. Using this, and the fact that $\pi_j$ has the same chromatic class as exactly two of these
vertices, it follows that at least one element in
$\{\lambda_{0j},\lambda_{1j},\lambda_{2j},\lambda_{3j}\}$ is $3$, and
at least two elements are $2$. Thus it follows that
  $(\lambda_{0j}+\lambda_{1j}+\lambda_{2j}+\lambda_{3j}-8)\geq 0.$ 

Therefore \eqref{eq:l16} implies that $(\lambda_{03}-1)t_0 +
(\lambda_{03}-1)t_3 \le 0$. Recall that $\lambda_{03}$ is either $1$
or $3$. If $\lambda_{03}=3$, then we have $2t_0 + 2t_3 \le 0$, which
contradicts (Proposition~\ref{pro:tempev}) that $\LL(\Key{D})$ has a
positive integral solution. We conclude that $\lambda_{03}=1$, that
is, $\pi_0$ and $\pi_3$ are adjacent in $\PKey{D}$. This proves \eqref{it:bigtwo}.

We also note that since $\lambda_{03}=1$, \eqref{eq:l16} implies that
\begin{equation}\label{eq:l17}
\sum_{j\in[r],j>3} (\lambda_{0j}+\lambda_{1j}+\lambda_{2j}+\lambda_{3j}-8) t_j=0.
\end{equation}

\noindent

By way of contradiction suppose there is a
vertex $\pi_4$ adjacent to none of
$\pi_0,\pi_1,\pi_2,\pi_3$ in $\PKey{D}$. 
Then each of $\lambda_{04},\lambda_{14},\lambda_{24},\lambda_{34}$ is
at least $2$. 
Using Proposition~\ref{pro:forkeys} and that  $\Key{D}$ is $\{0,4\}$-free, it follows that two of these
$\lambda$s are
$2$, and the other two are $3$. 
Therefore
$(\lambda_{04}+\lambda_{14}+\lambda_{24}+\lambda_{34}-8)=2$. Using
\eqref{eq:l17} we obtain
\begin{equation}\label{eq:l18}
2t_4+\sum_{j\in[r],j>4} (\lambda_{0j}+\lambda_{1j}+\lambda_{2j}+\lambda_{3j}-8) t_j=0.
\end{equation}
We recall that $\lambda_{0j} +\lambda_{1j} + \lambda_{2j} +
\lambda_{3j} - 8 \ge 0$ for every $j > 3$. 
Using this and \eqref{eq:l18}, it follows that $2t_4 \le 0$.
But this contradicts that $\LL(\Key{D})$  \tred{ has a positive integral
solution.}
\end{proof}

\begin{proposition}\label{pro:leaf}
Let $D$ be an optimal drawing of $K_{5,n}$, with $n$
even. Suppose that $\Key{D}$ is $\{0,4\}$-free. Then
$\PKey{D}$ has minimum degree at least $2$.
\end{proposition}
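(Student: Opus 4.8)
The plan is to assume, for contradiction, that $\PKey{D}$ has a vertex $\pi_0$ of degree at most $1$, and to produce a path to which Proposition~\ref{pro:horse} applies. First, $\Key{D}$ must carry at least one edge labelled $1$ (otherwise $\cru(D)\ge 2\binom{n}{2}>Z(5,n)$), so $\PKey{D}$ has an edge; being connected (Proposition~\ref{pro:c}\eqref{it:c4}) with at least two vertices, it has no isolated vertex, so $\deg\pi_0=1$. Let $\pi_1$ be its neighbour. Throughout I will use two parity facts that follow from Proposition~\ref{pro:forkeys} together with bipartiteness (Proposition~\ref{pro:c}\eqref{it:c5}) and $\{0,4\}$-freeness: a pair of vertices in the same chromatic class is joined by an edge labelled $2$, while a cross-class non-edge is labelled $3$. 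Note that all neighbours of $\pi_1$ lie in the class opposite to $\pi_1$, i.e.\ in the class of $\pi_0$.

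The governing observation is this: if $\pi_0\pi_1\pi_2\pi_3$ is a path in $\PKey{D}$ in which $\pi_2$ is the \emph{only} common neighbour of $\pi_1$ and $\pi_3$, then Proposition~\ref{pro:horse} applies — its first hypothesis being automatic, since $\pi_0$ has the single neighbour $\pi_1$ — and conclusion \eqref{it:bigtwo} forces $\pi_0\sim\pi_3$, contradicting $\deg\pi_0=1$ (as $\pi_3\notin\{\pi_0,\pi_1\}$). Thus the whole argument reduces to exhibiting such $\pi_2,\pi_3$, or else ruling out the obstructions to doing so.

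Since $\PKey{D}$ has maximum degree at most $3$ (Proposition~\ref{pro:b}\eqref{it:c1}), $\pi_1$ has one, two, or three neighbours. When $\deg\pi_1=2$, the unique neighbour $\pi_2\ne\pi_0$ works with \emph{any} edge $\pi_2\pi_3$, $\pi_3\ne\pi_1$: because $\pi_0,\pi_2$ are then $\pi_1$'s only neighbours and $\pi_0\not\sim\pi_3$, the vertex $\pi_2$ is forced to be the unique common neighbour of $\pi_1,\pi_3$, unless $\pi_2$ is itself a leaf. When $\deg\pi_1=3$, with further neighbours $u,v$, the analogous choice succeeds unless each neighbour $\ne\pi_1$ of a non-leaf among $u,v$ is adjacent to the other. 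The degenerate possibilities — $\pi_1$ or a neighbour of $\pi_1$ being a leaf, which by connectivity collapses the core to a single edge, a path on three vertices, or a $K_{1,3}$ — are each eliminated by checking that $\LL(\Key{D})$ then forces some $t_i=0$, against Proposition~\ref{pro:tempev}.

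The main obstacle is the one surviving case: $u,v$ both non-leaves, each neighbour of $u$ (resp.\ $v$) beyond $\pi_1$ adjacent to $v$ (resp.\ $u$). Avoiding $K_{2,3}$ (Proposition~\ref{pro:b}\eqref{it:c2}) forces $\deg u=\deg v=2$ with a single common further neighbour $w$, so $\pi_1uwv$ is a $4$-cycle carrying the pendant $\pi_0$ at $\pi_1$; this $4$-cycle is exactly what makes every path through it fail the uniqueness hypothesis. If $\deg w=2$ the core is $\{\pi_0,\pi_1,u,v,w\}$, and if $w$ has a third neighbour $z$ that is a leaf the core is $\{\pi_0,\pi_1,u,v,w,z\}$; in both of these small cores a direct computation with $\LL(\Key{D})$ forces $t_0=0$ (indeed $t_0+t_z=0$ in the second), against Proposition~\ref{pro:tempev}. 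The last possibility, $\deg w=3$ with a non-leaf $z$, is handled by picking $y\in N(z)\setminus\{w\}$ and applying Proposition~\ref{pro:horse} to the \emph{shifted} path $u\,w\,z\,y$: since $u,v$ have degree $2$ with neighbourhood exactly $\{\pi_1,w\}$, one checks that $w$ is the unique common neighbour of $u,z$ and $z$ the unique common neighbour of $w,y$, so both hypotheses hold, and conclusion \eqref{it:bigone} says every vertex — in particular $\pi_0$ — is adjacent to one of $u,w,z,y$, which is impossible because $\pi_0$'s only neighbour $\pi_1$ is none of these. This contradiction finishes the proof.
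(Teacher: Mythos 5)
Your proof is correct, and it rests on the same two pillars as the paper's: Proposition~\ref{pro:horse} to force either the contradiction $\pi_0\sim\pi_3$ or a $4$-cycle through $\pi_1$ hanging off the leaf, and Proposition~\ref{pro:tempev} applied to $\LL(\Key{D})$ to eliminate the small residual configurations. The decomposition is organized differently, though: the paper splits on whether a $4$-vertex path starts at $\pi_0$ (its Case 2 collapses the core to an edge, a $P_3$, or a $K_{1,3}$; its Case 1 forces the $4$-cycle and then asks where a sixth vertex attaches), whereas you split on $\deg\pi_1$ and use $K_{2,3}$-freeness (Proposition~\ref{pro:b}\eqref{it:c2}) to pin the surviving configuration down to the $4$-cycle $\pi_1 u w v$ with $\deg u=\deg v=2$. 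The one genuinely different step is your handling of the case where $w$ has a non-leaf third neighbour $z$: you apply Proposition~\ref{pro:horse}\eqref{it:bigone} to the shifted path $u\,w\,z\,y$ and note that $\pi_0$ is adjacent to none of its four vertices, while the paper instead sums the two equations $E_0$ and $E_5$ of $\LL(\Key{D})$ and bounds the remaining coefficients below to get $2t_0+2t_5\le 0$ --- an argument that works uniformly whether or not the sixth vertex is a leaf, so the paper needs no leaf/non-leaf split there. Your approach buys a cleaner combinatorial contradiction at the cost of one extra case; the linear computations you leave as ``direct'' ($t_0=0$ on five vertices, $t_0+t_z=0$ on six) do check out and coincide with the paper's Subcases 1.1 and 1.2.
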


\begin{proof}
By way of contradiction, suppose that $\PKey{D}$ has a vertex of
degree $0$ or $1$.

Suppose first that
$\PKey{D}$ has a vertex of degree $0$. Then the
connectedness of $\PKey{D}$ implies that this is the only vertex in
$\PKey{D}$ (and, consequently, the only vertex in $\Key{D}$). Thus all vertices of $D$ have the same rotation. Since
if $a_i, a_j$ have the same rotation in a drawing $D'$ then $\ucr{D'}{a_i,a_j}=4$, it follows that $\cru(D) \ge 4{n\choose 2} =
2n(n-1)$. Since $Z(5,n) = n(n-2)$ and $D$ is optimal, we must have
$2n(n-1) \le n(n-2)$, but this inequality does not hold for any positive integer
$n$. 

Thus we may assume that $\PKey{D}$ has a vertex of degree $1$. 

Let
$\pi_0,\pi_1,\ldots,\pi_{m-1}$ denote the vertices of $\PKey{D}$.
Without any loss of generality we may assume that $\pi_0$ has degree
$1$ in 
$\PKey{D}$.
For $i,j\in {\tred{[m]}}$, let $\lambda_{ij}$
denote the label of the edge $\pi_i\pi_j$. 

We divide the rest of the proof into two cases.

\vglue 0.3 cm
\noindent{\sc Case 1. }{\em $\PKey{D}$ has a path with $4$ vertices starting at
$\pi_0$.}
\vglue 0.3 cm

Without loss of generality, let 
$\pi_0,\pi_1,\pi_2,\pi_3$ be this path. Since $\pi_0$ is a leaf, it follows that
$\pi_1$ is the only vertex of $\PKey{D}$ adjacent to both $\pi_0$ and
$\pi_2$. We note that then there must be a vertex in $\PKey{D}$ (say $\pi_4$,
without loss of generality) adjacent to
both $\pi_1$ and $\pi_3$, as otherwise it would follow by
Proposition~\ref{pro:horse}(\ref{it:bigtwo}) that $\pi_0$ is adjacent
to $\pi_3$, contradicting that $\pi_0$ is a leaf.  Thus
$(\pi_1,\pi_2,\pi_3,\pi_4,\pi_1)$ is a cycle. 

For $i,j\in \tred{[5]}$, let $\lambda_{ij}$ denote the label of $\pi_i\pi_j$
in $\Key{D}$. 
Since the edges $\pi_0\pi_1,\pi_1\pi_2,\pi_2\pi_3,\pi_3\pi_4$ and
$\pi_1\pi_4$ are all in $\PKey{D}$, it follows that 
$\lambda_{01}=\lambda_{12}=\lambda_{23}=\lambda_{34}=\lambda_{14}=1$.
Now since $\Key{D}$ is $\{0,4\}$-free, using
Proposition~\ref{pro:forkeys} 
it follows that $\lambda_{02} = \tred{\lambda_{04}}
=\lambda_{24}=\lambda_{13}=2$ and (since $\pi_0\pi_3$ is
not in $\PKey{D}$) that 
$\lambda_{03}=3$. 

\vglue 0.2 cm
\noindent{\sc Subcase 1.1. }{\em $\pi_0,\pi_1,\pi_2,\pi_3,\pi_4$ are all the vertices in
$\PKey{D}$.}
\vglue 0.2 cm

In this case the linear system $\LL(\Key{D})$ reads:

\begin{align*}
\begin{matrix}
\begin{tabular}{c c r c r c r c r c r c r}
$E_0$ & : &  $2t_0 $ & $-$ & $ t_1$ & $ $ & $ $ & $+$ & $t_3 $ & $ $ & $ $    $=$ & $0$, \\
$E_1$ & : & $-t_0 $ & $+$ & $ 2t_1$ & $- $ & $t_2 $ & $ $ & $ $ & $-$ & $t_4 $    $=$ & $0$, \\
$E_2$ & : & $ $ & $-$ & $ t_1$ & $+$ & $2t_2 $ & $-$ & $t_3 $ & $$ & $ $    $=$ & $0$, \\
$E_3$ & : & $t_0 $ & $$ & $ $ & $ -$ & $t_2 $ & $+$ & $2t_3 $ & $-$ & $t_4$    $=$ & $0$, \\
$E_4$ & : & $ $ & $-$ & $ t_1$ & $ $ & $ $ & $-$ & $t_3 $ & $+$ & $2t_4 $    $=$ & $0$, 
\end{tabular}
\end{matrix}
\end{align*}

\noindent where for brevity we let $E_i:= E(\pi_i,\Key{D})$ for $i\in [5]$.

\tred{Subtracting $E_4$ from $E_2$, we obtain that $t_2=t_4$. Adding
the equations $E_0,E_1,E_2$, and using $t_2=t_4$, we 
obtain $t_0 = 0$.} Thus the system $\LL(\Key{D})$ has no positive integral solution,
contradicting (by Proposition~\ref{pro:tempev}) the optimality of $D$.

\vglue 0.2 cm
\noindent{\sc Subcase 1.2. }{\em $\PKey{D}$ has a vertex $\pi_5 \notin
  \{\pi_0,\pi_1,\pi_2,\pi_3,\pi_4\}$.}
\vglue 0.2 cm

The connectedness of $\PKey{D}$ implies that $\pi_5$ is
adjacent to $\pi_i$ for some $i\in \{0,1,2,3,4\}$. Since $\pi_0$ is a
leaf only adjacent to \tred{$\pi_1$}, then $i\neq 0$. Since $\pi_1$ already
has degree $3$ in $\PKey{D}$, it follows from
Proposition~\ref{pro:b}\eqref{it:c1} that $i\neq 1$. Thus $i$ is
either $2,3$ or $4$. Since the roles of $2$ and $4$ are symmetric, we may
conclude that $\pi_5$ is adjacent to either $\pi_2$ or to $\pi_3$.

Suppose first that $\pi_5$ is adjacent to $\pi_3$ in $\PKey{D}$.  

In this case $\lambda_{35}=1$. Using Proposition~\ref{pro:forkeys},
that $\Key{D}$ is $\{0,4\}$-free, \tred{that $\pi_0$ is only adjacent to $\pi_1$, and Claim~\ref{cla:ch}},
we obtain 
$\lambda_{05}=\lambda_{25}=\lambda_{45}=2$ and that $\lambda_{15}=3$.
Thus in this case the $0$-th and the $5$-th equations of the system $\LL(\Key{D})$ read:
\begin{align*}
\begin{matrix}
\begin{tabular}{c c r c r c r c r c r c r}
  $E_0$ & : &  $2t_0 $ & $-$ & $ t_1$ &  $+$ & $t_3 $ & $ $ & $ $ & $+$ & $ \sum\limits_{j\in[m],j>5} (\lambda_{0j}-2) t_j$ & $=$ & $0$. \\
$E_5$ & : & $ $ & $+$ & $ t_1$ &  $-$ & $t_3 $ & $+ $ & $2t_5 $ & $+ $ & $ \sum\limits_{j\in[m],j>5} (\lambda_{5j}-2) t_j$ &  $=$ & $0$.
\end{tabular}
\end{matrix}
\end{align*}

\noindent where for brevity we let $E_i:= E(\pi_i,\Key{D})$ for $i=0$
and $5$.

Adding these equations, we get
\begin{equation}\label{eq:la}
2t_0 + 2t_5 + \sum\limits_{j\in[m],j>5} (\lambda_{0j} + \lambda_{5j} - 4   )t_j = 0.
\end{equation}
We now argue that $\lambda_{0j} + \lambda_{5j} - 4\ge0$
whenever $j > 5$. To see this, note that $\pi_0$ and $\pi_5$ are in
the same chromatic class.  If $\pi_j$ is in the same chromatic class,
then, since $\Key{D}$ is $\{0,4\}$-free, it follows that
$\lambda_{0j}$ and $\lambda_{5j}$ are both $2$, and so 
$\lambda_{0j} + \lambda_{5j} - 4\ge0$, as claimed. If
$\pi_j$ is in the other chromatic class, then 
both $\lambda_{0j}$ and $\lambda_{5j}$ are odd. Since
$\pi_0$ is a leaf whose only adjacent vertex is $\pi_1$, it follows
that $\lambda_{0j} = 3$. On the other hand,
$\lambda_{5j}$ is either $1$ or $3$. In particular,
$\lambda_{5j} \ge 1$,
and thus also in this case $\lambda_{0j} + \lambda_{5j}
- 4\ge0$, as claimed. 
It follows from this observation and \eqref{eq:la} that
\begin{equation*}
2t_0 + 2t_5  \le 0,
\end{equation*}

\noindent and so the system $\LL(\Key{D})$ has no positive integral solution,
contradicting Proposition~\ref{pro:tempev}.

Suppose finally that $\pi_5$ is adjacent to $\pi_2$ in
$\PKey{D}$. 

Consider then the path $\pi_0,\pi_1,\pi_2,\pi_5$. Since $\pi_0$ is a
leaf, it follows that $\pi_1$ is the only vertex adjacent to both
$\pi_0$ and $\pi_2$. Now note that $\pi_2$ is the only vertex adjacent 
to both $\pi_1$ and $\pi_5$, since by Proposition~\ref{pro:b}\eqref{it:c1}
$\pi_1$ cannot be incident to any vertex other than $\pi_0,\pi_2$, and
$\pi_4$. Thus Proposition~\ref{pro:horse} applies, and so we must have
that $\pi_0$ and $\pi_5$ are adjacent in $\PKey{D}$. But this is
impossible, since the only vertex in $\PKey{D}$ adjacent to the leaf
$\pi_0$ is 
$\pi_1$.

\vglue 0.3 cm
\noindent{\sc Case 2. }{\em 
$\PKey{D}$ has no path with $4$ vertices starting at
$\pi_0$.
}
\vglue 0.3 cm

We recall that $\pi_0$ is a leaf in $\PKey{D}$. Let $\pi_1$ be the
vertex adjacent to $\pi_0$. 

Suppose first that $\pi_0$ and $\pi_1$ are the only vertices in
$\PKey{D}$. Then $\LL(\Key{D})$ consists of only two equations, namely
$2t_1-t_0=0$ and $2t_0-t_1=0$. This system obviously has no positive
integral solutions, contradicting Proposition~\ref{pro:tempev}.

We may then assume that there is an additional vertex $\pi_2$ in
$\PKey{D}$. By connectedness of $\PKey{D}$, 
and since $\pi_0$ is a leaf, it follows that $\pi_2$
is adjacent to $\pi_1$. 

If $\pi_0,\pi_1,\pi_2$ are the only vertices $\Key{D}$, then the
system $\LL(\Key{D})$ consists of the three equations
$2t_0 - t_1=0$, $-t_0 + 2t_1 -t_2=0$, and
and
$ -t_1+ 2t_2=0$. Adding these equations we obtain $t_0 +
t_2=0$. Thus also in this case $\LL(\Key{D})$  does not have a
positive integral solution, again contradicting
Proposition~\ref{pro:tempev}.

Thus there
must exist an additional vertex $\pi_3$ in $\PKey{D}$.  
Since $\pi_0$
is a leaf, and by assumption (we are working in Case 2) there is no
path with $4$ vertices starting at $\pi_0$, 
it follows that $\pi_3$ must be adjacent to $\pi_1$. 
We already
know that $\lambda_{01}=\lambda_{12}=\lambda_{13}=1$. 
Since $\Key{D}$ is $\{0,4\}$ free, it follows from
Proposition~\ref{pro:forkeys} that
$\lambda_{02}=\lambda_{03}=\lambda_{23}=2$. Thus in this case
$\LL(\Key{D})$ consists of the equations 
 $2t_0 - t_1=0$,
$ -t_0 + 2t_1 -t_2-t_3=0$,
$ -t_1+ 2t_2=0$, and
$-t_1+ 2t_3=0$. 
It is an elementary exercise to show that these equations do not have
a simultaneous positive integral solution, and so in this case we also
obtain a contradiction to Proposition~\ref{pro:tempev}.
\end{proof}


\section{Properties of cores. IV. Girth and maximum size.}\label{sec:someprrk4}

\begin{proposition}\label{pro:e}
Let $D$ be an optimal drawing of $K_{5,n}$, with $n$
even. Suppose that $\Key{D}$ is $\{0,4\}$-free. Then:
\begin{enumerate}
\item\label{it:g2} $\PKey{D}$ has girth $4$. 
\item\label{it:g3} If $v$ is a degree-$2$ vertex in $\PKey{D}$, then $v$ is in a
$4$-cycle in $\PKey{D}$.
\item\label{it:g4} $\PKey{D}$ has at most $7$ vertices.
\end{enumerate}
\end{proposition}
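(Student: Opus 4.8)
The plan is to establish the three items in order, using Proposition~\ref{pro:horse} as the main engine, together with the already-proved facts that $\PKey{D}$ is connected (Proposition~\ref{pro:c}\eqref{it:c4}), bipartite (Proposition~\ref{pro:c}\eqref{it:c5}), has minimum degree at least $2$ (Proposition~\ref{pro:leaf}) and maximum degree at most $3$ (Proposition~\ref{pro:b}\eqref{it:c1}), and contains neither $K_{2,3}$ (Proposition~\ref{pro:b}\eqref{it:c2}) nor the subdivided triangle of Figure~\ref{fig:k32} (Proposition~\ref{pro:b}\eqref{it:c3}). For \eqref{it:g2}, since $\PKey{D}$ is connected with minimum degree at least $2$ it contains a cycle, and being bipartite every cycle has even length, so the girth is even and at least $4$. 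To rule out girth $\ge 6$ I would take a shortest cycle and four consecutive vertices $\pi_0,\pi_1,\pi_2,\pi_3$ on it. If $\pi_0,\pi_2$ had a common neighbour other than $\pi_1$, or $\pi_1,\pi_3$ one other than $\pi_2$, that neighbour would close a $4$-cycle, contradicting girth $\ge 6$; hence the hypotheses of Proposition~\ref{pro:horse} hold, and \eqref{it:bigtwo} forces $\pi_0$ and $\pi_3$ to be adjacent. But then $\pi_0\pi_1\pi_2\pi_3\pi_0$ is a $4$-cycle, again contradicting girth $\ge 6$. So the girth is exactly $4$.

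For \eqref{it:g3}, let $v$ have degree $2$ with neighbours $u,w$, and suppose $v$ lies in no $4$-cycle. Pick a neighbour $u'\ne v$ of $u$ (minimum degree at least $2$); then $u'\ne w$, since $\PKey{D}$ has no triangle. I would apply Proposition~\ref{pro:horse} to the path $u',u,v,w$: because $v$ has degree $2$ its only neighbours are $u$ and $w$, so the only candidates for a common neighbour of $u'$ and $v$ are $u$ and $w$, and $w\sim u'$ would already place $v$ on the $4$-cycle $u'uvw$, which is excluded; thus $u$ is the unique common neighbour of $u'$ and $v$. Likewise, since $v$ lies in no $4$-cycle, $u$ and $w$ have no common neighbour other than $v$. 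Proposition~\ref{pro:horse}\eqref{it:bigtwo} then yields $u'\sim w$, i.e. the $4$-cycle $u'uvw$ through $v$, a contradiction.

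For \eqref{it:g4}, I would argue by contradiction, assuming $\PKey{D}$ has at least $8$ vertices, and fix a $4$-cycle $Q=c_0c_1c_2c_3$ (with $c_0,c_2$ and $c_1,c_3$ in opposite colour classes), which exists by \eqref{it:g2}. The no-$K_{2,3}$ property is the key structural lever: the opposite corners $c_0,c_2$ (resp. $c_1,c_3$) have no common neighbour outside $Q$, each $c_i$ has at most one neighbour outside $Q$ (maximum degree $3$), and each vertex outside $Q$ has at most one neighbour on $Q$; hence the first neighbourhood $L_1$ of $Q$ has at most four vertices and is matched to $Q$. The plan is then a case analysis on how $Q$ extends. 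When first-layer vertices are present I would invoke Proposition~\ref{pro:horse} on paths that run along a cycle edge flanked by external vertices, of the form $v\,c_i\,c_{i+1}\,v'$ (rather than through three consecutive corners, whose two endpoints share \emph{both} remaining corners as common neighbours and so violate the uniqueness hypothesis): the matching structure forced by no-$K_{2,3}$ makes the uniqueness hypotheses verifiable, after which \eqref{it:bigtwo} and \eqref{it:bigone} force adjacencies and domination that collapse the configuration. The densest completions are eliminated directly by exhibiting the forbidden subgraph of Figure~\ref{fig:k32} (for instance the cube $Q_3$, which one checks contains it), while any residual degenerate configuration is ruled out because its system $\LL(\Key{D})$ has no positive solution, contradicting Proposition~\ref{pro:tempev}. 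Carrying this out shows that no graph on at least $8$ vertices can arise, so $\PKey{D}$ has at most $7$ vertices.

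The main obstacle is \eqref{it:g4}. Two difficulties stand out. First, the uniqueness hypotheses of Proposition~\ref{pro:horse} are genuinely restrictive, and the natural candidate paths (three consecutive corners of $Q$, or the two neighbours of a degree-$2$ vertex sitting on a $4$-cycle) fail them precisely because the relevant pair already has two common neighbours; so in each case one must produce a path whose inner vertices are truly unique common neighbours and verify this from the no-$K_{2,3}$ structure. Second, one must bound the configuration globally, in particular excluding a nonempty second layer of vertices not adjacent to $Q$; I expect this to require either promoting one of the Proposition~\ref{pro:horse} applications to the domination conclusion \eqref{it:bigone}, or a separate counting/linear-system argument. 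The bookkeeping of all these cases, rather than any single step, is where the real work lies.
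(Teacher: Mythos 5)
Your arguments for items \eqref{it:g2} and \eqref{it:g3} are correct and essentially the same as the paper's: for \eqref{it:g2} the paper simply takes any path on four vertices and observes that either a chord-closing common neighbour or Proposition~\ref{pro:horse}\eqref{it:bigtwo} produces a $4$-cycle, rather than arguing by contradiction with girth $\ge 6$; for \eqref{it:g3} the paper places the degree-$2$ vertex second rather than third on the path. These are cosmetic differences.

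Item \eqref{it:g4}, however, is only a plan, and it has a genuine gap exactly where you flag it: you never establish that every vertex of $\PKey{D}$ is adjacent to the fixed $4$-cycle $Q$, and without that domination statement nothing bounds the number of vertices. Your reason for refusing to apply Proposition~\ref{pro:horse} to three consecutive corners of $Q$ --- that the endpoints share \emph{both} remaining corners as common neighbours, violating the uniqueness hypothesis --- is precisely where the paper goes the other way. The proof of Proposition~\ref{pro:horse} invokes the uniqueness hypothesis only for vertices $\pi_j$ with $j>3$, i.e.\ \emph{outside} the path $\pi_0\pi_1\pi_2\pi_3$; what is actually needed is that no vertex off the path is a common neighbour of $\pi_0,\pi_2$ or of $\pi_1,\pi_3$, and for a $4$-cycle this is exactly what $K_{2,3}$-freeness (Proposition~\ref{pro:b}\eqref{it:c2}) supplies. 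With that reading, conclusion \eqref{it:bigone} gives domination of $\PKey{D}$ by $Q$ at once; combined with maximum degree $3$ (Proposition~\ref{pro:b}\eqref{it:c1}) this yields at most $8$ vertices, and the $8$-vertex case is excluded because, $Q$ being an arbitrary $4$-cycle and every degree-$2$ vertex lying on one by \eqref{it:g3}, the graph would have to be cubic, hence the $3$-cube, which contains the forbidden subgraph of Figure~\ref{fig:k32}. Your substitute paths $v\,c_i\,c_{i+1}\,v'$ require two external vertices attached to consecutive corners (which need not exist) and in any case would only dominate the wrong four-vertex set, so they cannot exclude a second layer; the residual appeal to $\LL(\Key{D})$ is not worked out. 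To repair the write-up, either restate Proposition~\ref{pro:horse} with the weaker (and sufficient) hypothesis and apply it to $Q$ itself, or supply the missing second-layer exclusion by some other means.
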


\begin{proof}
By Proposition~\ref{pro:leaf}, the minimum degree of $\PKey{D}$ is at
least $2$. Since $\PKey{D}$ is \tred{simple and bipartite}, it immediately follows that the girth of
$\PKey{D}$ is a positive number greater than or equal to $4$.  Let
$\pi_0,\pi_1,\pi_2,\pi_3$ be a path in $\PKey{D}$. If there is a
vertex other than $\pi_1$ adjacent to both $\pi_0$ and $\pi_2$, or a
vertex other than $\pi_2$ adjacent to both $\pi_1$ and $\pi_3$, then
$\PKey{D}$ clearly has a $4$-cycle, and we are done. Otherwise, 
it follows from Proposition~\ref{pro:horse}\eqref{it:bigtwo} that $\pi_0$ is adjacent
to $\pi_3$, and so 
$(\pi_0,\pi_1,\pi_2,\pi_3,\pi_0)$ is a $4$-cycle. Thus (\ref{it:g2}) follows.

Now let $\pi_1$ be a degree-$2$ vertex in $\PKey{D}$. Since $\PKey{D}$ has
minimum degree at least $2$, using \eqref{it:g2} it obviously follows that there exists a
path $\pi_0, \pi_1, \pi_2, \pi_3$ in $\PKey{D}$. If there is a vertex
adjacent to both $\pi_0$ and $\pi_2$ other than $\pi_1$, then $\pi_1$
is obviously contained in a $4$-cycle. In such a case we are done, so
suppose that this is not the case. Since $\pi_1$ is only adjacent
to $\pi_0$ and $\pi_2$, using that the degree of $\pi_1$ is $2$ it follows
that no vertex other than
$\pi_2$ is adjacent to both $\pi_1$ and $\pi_3$. Thus it follows
from Proposition~\ref{pro:horse}(\ref{it:bigtwo}) that $\pi_0$ and $\pi_3$ are
adjacent in $\PKey{D}$. Thus $\pi_1$ is contained in the $4$-cycle $(\pi_0, \pi_1, \pi_2,
\pi_3, \pi_0)$, and (\ref{it:g3}) follows.

Let $C=(\pi_0, \pi_1, \pi_2, \pi_3,\pi_0)$ be a $4$-cycle in
$\PKey{D}$; the existence of $C$ is guaranteed from
(\ref{it:g2}). By Proposition~\ref{pro:b}\eqref{it:c2} $\PKey{D}$ contains no subgraph isomorphic to
$K_{2,3}$, and so, in $\PKey{D}$, no vertex other than \tred{$\pi_1$ or $\pi_3$} is adjacent to
both $\pi_0$ and $\pi_2$, and no vertex other than \tred{$\pi_2$ or $\pi_0$} is adjacent
to both $\pi_1$ and $\pi_3$. Thus Proposition~\ref{pro:horse}
applies. Using 
Proposition~\ref{pro:b}(\ref{it:c1}) and Proposition~\ref{pro:horse}(\ref{it:bigone}),
we obtain that $\PKey{D}$ has at
most $4$ vertices other than $\pi_0,\pi_1, \pi_2$, and $\pi_3$; that
is, $\PKey{D}$ has at most $8$ vertices in total; moreover, if
$\PKey{D}$ has exactly $8$ vertices, then every vertex of $C$ has
degree $3$. Since  $C$ was an arbitrary $4$-cycle, we have actually
proved that if $\PKey{D}$ has $8$ vertices, then every vertex
contained in a
$4$-cycle must have degree $3$. In view of \eqref{it:g3}, this implies
that if  $\PKey{D}$ has $8$ vertices, then it must be cubic.

Now the unique (up to isomorphism) cubic connected bipartite graph on
$8$ vertices is the $3$-cube. 
Since the $3$-cube contains as a subgraph the graph in
Figure~\ref{fig:k32}, it follows that $\PKey{D}$ cannot have exactly
$8$ vertices.
\end{proof}

\section{The possible cores of an antipodal-free optimal drawing.}\label{sec:thecores}

Our goal in this section is to establish Lemma~\ref{lem:j}, which
states that the core of every
antipodal-free optimal drawing of $K_{5,n}$ is isomorphic to either a
$4$-cycle or to the graph $\Bur$ obtained from the $6$-cycle by adding
an edge joining two diametrically oposed vertices (see Figure~\ref{fig:bur}).

	\begin{figure}[h!]
	\begin{center}
		\scalebox{0.4}{\input{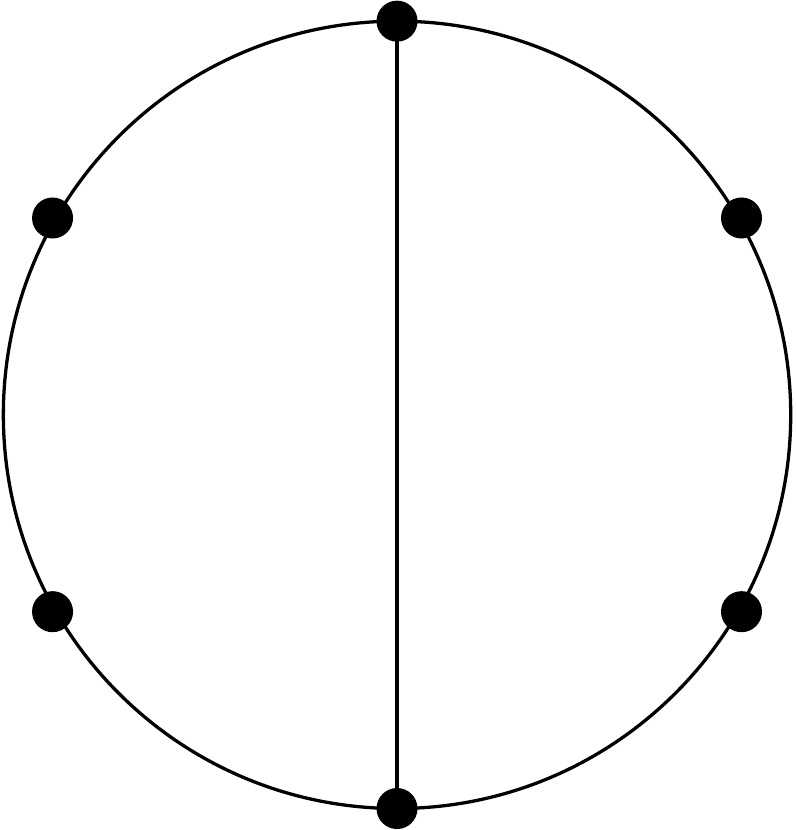_t}}
		 \caption{{\small The graph $\Bur$.}}
		 \label{fig:bur}
	\end{center}
	\end{figure} 

We first show this for the particular case
in which $\Key{D}$ is not only antipodal-free (that is, $0$-free), but also $4$-free:

\begin{proposition}\label{pro:h}
Let $D$ be an optimal drawing of $K_{5,n}$, with $n$
even. If $\Key{D}$ is $\{0,4\}$-free, then $\PKey{D}$ is
isomorphic to the $4$-cycle or to $\Bur$.
\end{proposition}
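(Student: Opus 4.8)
The plan is to combine all of the structural restrictions on $\PKey{D}$ accumulated in the previous sections and then run a short case analysis anchored on a single $4$-cycle. Under the hypothesis that $\Key{D}$ is $\{0,4\}$-free we already know that $\PKey{D}$ is a simple connected bipartite graph (Proposition~\ref{pro:c}) of minimum degree at least $2$ (Proposition~\ref{pro:leaf}) and maximum degree at most $3$ (Proposition~\ref{pro:b}\eqref{it:c1}), that it contains no $K_{2,3}$ (Proposition~\ref{pro:b}\eqref{it:c2}) and no copy of the subdivided triangle of Figure~\ref{fig:k32} (Proposition~\ref{pro:b}\eqref{it:c3}), that it has girth $4$ and at most $7$ vertices (Proposition~\ref{pro:e}), and that every degree-$2$ vertex lies on a $4$-cycle. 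First I would fix a $4$-cycle $C=(\pi_0,\pi_1,\pi_2,\pi_3,\pi_0)$, whose existence is guaranteed by Proposition~\ref{pro:e}\eqref{it:g2}, and identify $\{\pi_0,\pi_2\}$ and $\{\pi_1,\pi_3\}$ with the two chromatic classes. Exactly as in the proof of Proposition~\ref{pro:e}\eqref{it:g4}, the $K_{2,3}$-freeness of $\PKey{D}$ lets Proposition~\ref{pro:horse} apply to $C$, so that every vertex of $\PKey{D}$ outside $C$ is adjacent to a vertex of $C$.

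Next I would pin down how an outside vertex $w$ can attach to $C$. Since $\PKey{D}$ is bipartite, $w$ sees only the two cycle vertices of the opposite colour; if $w$ were adjacent to both of them, then together with the two cycle vertices of $w$'s own colour we would exhibit a $K_{2,3}$, which is forbidden. Hence $w$ is adjacent to exactly one vertex of $C$. Dually, each cycle vertex already has degree $2$ inside $C$ and maximum degree $3$, so each vertex of $C$ has at most one outside neighbour. Consequently the outside vertices inject into the cycle vertices, and together with the bound of at most $7$ vertices this leaves at most three outside vertices; write $k$ for their number. Finally, because $\PKey{D}$ has minimum degree at least $2$ while each outside vertex has a single cycle neighbour, every outside vertex must also have at least one neighbour among the other outside vertices, necessarily of the opposite colour.

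It then remains to treat $k=0,1,2,3$. For $k=0$ the graph is $C$ itself, the $4$-cycle. The case $k=1$ is impossible, since the single outside vertex would have no outside neighbour, violating minimum degree $2$. For $k=2$ the two outside vertices have opposite colours and must be adjacent to each other; chasing the degrees, the six vertices carry a $6$-cycle (the $C$-path avoiding the attachment edge followed by the two outside vertices), and the $C$-edge joining the two attachment points is one of its diametral chords, so the graph is $\Bur$. The delicate case, and the main obstacle, is $k=3$: here I would show the attachments are forced, namely the two equally coloured outside vertices must hit distinct cycle vertices and the third outside vertex must be adjacent to both of them, so that the graph is, up to isomorphism, precisely the subdivided triangle of Figure~\ref{fig:k32}, contradicting Proposition~\ref{pro:b}\eqref{it:c3}. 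Since the attachment symmetries of $C$ collapse every sub-case to this single forbidden graph, $k=3$ cannot occur, and the only surviving possibilities for $\PKey{D}$ are the $4$-cycle and $\Bur$.
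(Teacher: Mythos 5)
Your argument is correct, and it reaches the conclusion by a genuinely different route than the paper. The paper argues by contradiction and classifies $\PKey{D}$ according to its underlying cubic multigraph: it suppresses the degree-$2$ vertices, observes that the number of degree-$3$ ``nodes'' is $0$, $2$, $4$, or $6$, and then enumerates the bipartite subdivisions on at most $7$ vertices of the theta graph, $K_4$, $K_{3,3}$, and the triangular prism, ruling each out via the forbidden subgraphs $K_{2,3}$ and the subdivided triangle of Figure~\ref{fig:k32} (the theta-graph case being where $\Bur$ survives). You instead anchor on a single $4$-cycle $C$, use Proposition~\ref{pro:horse}\eqref{it:bigone} together with $K_{2,3}$-freeness and the degree bound to show that every outside vertex attaches to exactly one vertex of $C$ and injectively so, and then exhaust the cases $k=0,1,2,3$ for the number of outside vertices. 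The two proofs lean on the same toolkit, but yours dispenses with the classification of small cubic graphs and makes the emergence of $\Bur$ (at $k=2$) and of the Figure~\ref{fig:k32} obstruction (at $k=3$) quite transparent; the paper's version, in exchange, localizes all the topological bookkeeping in the suppression step and needs no discussion of how outside vertices pair up with one another. One point worth making explicit if you write this up: Proposition~\ref{pro:horse} as literally stated requires $\pi_1$ to be the \emph{only} common neighbour of $\pi_0$ and $\pi_2$, which fails on a $4$-cycle because of $\pi_3$; you correctly note that you are applying it exactly as the paper does in the proof of Proposition~\ref{pro:e}\eqref{it:g4}, where $K_{2,3}$-freeness guarantees the hypothesis that the proof of Proposition~\ref{pro:horse} actually uses (no vertex outside the path is a common neighbour of $\pi_0,\pi_2$ or of $\pi_1,\pi_3$), so this is a presentational wrinkle rather than a gap.
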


\begin{proof}
By way of contradiction, suppose that $\PKey{D}$ is isomorphic to
neither a $4$-cycle nor to $\Bur$.
Recall that $\PKey{D}$ has minimum degree at least $2$ (Proposition~\ref{pro:leaf}). We divide the
proof into two cases, depending on whether or not $\PKey{D}$ has
degree-$2$ vertices.

\vglue 0.3 cm
\noindent{\sc Case 1. }{\em $\PKey{D}$ has at least one degree-$2$ vertex.}
\vglue 0.3 cm

By
Proposition~\ref{pro:e}(\ref{it:g4}), $\PKey{D}$ has at most $7$
vertices. 
If all the vertices in $\PKey{D}$ have degree $2$, then (since
$\PKey{D}$ is simple and, by Proposition~\ref{pro:c}\eqref{it:c4},
connected) 
$\PKey{D}$ is a cycle. By Proposition~\ref{pro:e}\eqref{it:g2}, in
this case $\PKey{D}$ is a $4$-cycle, contradicting our assumption at
the beginning of the proof.

Thus we may assume that $\PKey{D}$ has at least one
degree-$3$ vertex. 
Let $H$ be the graph obtained by suppressing the degree-$2$
vertices from $\PKey{D}$. We call the vertices of $\PKey{D}$ that
correspond to the vertices in $H$ (that is, the degree-$3$ vertices of
$\PKey{D}$) the {\em nodes} of $\PKey{D}$. 

It follows from elementary graph theory that $\PKey{D}$ has an even
number of nodes. Since $\PKey{D}$ has at most $7$ vertices, it follows
that $\PKey{D}$ has either $2, 4$, or $6$ nodes. 

\vglue 0.2 cm
\noindent{\sc Subcase 1.1. }{\em $\PKey{D}$ has $6$ nodes.}
\vglue 0.2 cm

Up to isomorphism, there are only two cubic simple graphs on $6$
nodes, namely $K_{3,3}$ and the triangular prism $T_3$ (this is the
simple cubic graph with a matching whose removal leaves two disjoint
$3$-cycles).
 Now $T_3$ has two vertex disjoint $3$-cycles, and so in
order to turn it into a bipartite graph, we must subdivide at least
$2$ edges, that is, add at least two vertices to $T_3$. Since
$\PKey{D}$ has at most $7$ vertices, it follows that $H$ cannot be
isomorphic to $T_3$.

Suppose finally that $H$ is isomorphic to $K_{3,3}$. Since no
bipartite graph on $7$ vertices is a subdivision of $K_{3,3}$, it
follows that $\PKey{D}$ must be itself isomorphic to $K_{3,3}$. Since
$K_{3,3}$ obviously contains $K_{2,3}$ as a subgraph, this contradicts
Proposition~\ref{pro:b}(\ref{it:c2}).

\vglue 0.2 cm
\noindent{\sc Subcase 1.2. }{\em $\PKey{D}$ has $4$ nodes.}  \vglue
0.2 cm In this case $H$ must be isomorphic to $K_4$, the only
  cubic graph on four vertices. It is readily seen that there are only
  two ways to turn $K_4$ into a bipartite graph using at most three
  edge subdivisions. One way is to subdivide once each of the edges
  in a $3$-cycle of $K_4$, and the other way is to subdivide
  (once) two nonadjacent edges (in the latter case, we obtain a graph that has a subgraph
  isomorphic to $K_{2,3}$). By
  Proposition~\ref{pro:b}, neither of these graphs can be
  the core of $D$. \vglue 0.2 cm

\noindent{\sc Subcase 1.3. }{\em $\PKey{D}$ has $2$ nodes.}
\vglue 0.2 cm

In this case $H$ must consist of two vertices joined by three parallel
edges. Since $\PKey{D}$ is bipartite it follows that each of these
edges must be subdivided the same number of times modulo $2$
(subdividing an edge $0$ times being a possibility). Moreover, since
$\PKey{D}$ is simple at least two edges must be subdivided at least
once each.

Now no edge may be subdivided more than twice, as in this case the
result would be a graph with a degree-$2$ vertex belonging to no
$4$-cycle, contradicting Proposition~\ref{pro:e}(\ref{it:g3}).

Suppose now that some edge of $H$ is subdivided exactly twice. Then, since
$\PKey{D}$ has at most $7$ vertices, it follows that two edges of $H$
are subdivided exactly twice, and the other edge of $H$ is not
subdivided. 
Thus it follows that in
this
case $\PKey{D}$ is isomorphic to $\Bur$, contradicting our initial assumption.

Suppose finally that no edge of $H$ is subdivided more than
once. Since $\PKey{D}$ is bipartite, it follows that every edge
of $H$ must be subdivided exactly once. Thus $\PKey{D}$ is isomorphic
to $K_{2,3}$, contradicting Proposition~\ref{pro:b}(\ref{it:c2}).

\vglue 0.3 cm
\noindent{\sc Case 2. }{\em $\PKey{D}$ has no degree-$2$ vertices.}
\vglue 0.3 cm

In this case, $\PKey{D}$ is cubic. By Proposition~\ref{pro:c},
$\PKey{D}$ is bipartite and connected. By
Proposition~\ref{pro:e}(\ref{it:g4}), $\PKey{D}$ has at most $7$
vertices. By elementary graph theory, since $\PKey{D}$ is cubic, then
it has an even number of vertices. Since $\PKey{D}$ is simple, it
follows that $\PKey{D}$ has either $4$ or
$6$ vertices.

Now there are no simple cubic bipartite graphs on $4$ vertices, so
$\PKey{D}$ must have $6$ vertices. 
Up to isomorphism, the only cubic bipartite graph on $6$ vertices is
$K_{3,3}$. But $\PKey{D}$ cannot be isomorphic to $K_{3,3}$, since by 
Proposition~\ref{pro:b}(\ref{it:c2}) $\PKey{D}$ does not contain a
subgraph isomorphic to $K_{2,3}$.
\end{proof}

\begin{proposition}\label{pro:r}
Let $D$ be an antipodal-free, optimal drawing of $K_{5,n}$, with $n$
even. Then $\Key{D}$ is $4$-free. 
\end{proposition}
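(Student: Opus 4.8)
The plan is to argue by contradiction: I assume that $\Key{D}$ has an edge of label $4$, joining two rotations $\pi_0$ and $\pi_1$, and derive a contradiction with the existence of a positive integral solution of $\LL(\Key{D})$ guaranteed by Proposition~\ref{pro:tempev}. Since $D$ is antipodal-free, every label of $\Key{D}$ lies in $\{1,2,3,4\}$, so throughout I may use the parity half of Proposition~\ref{pro:forkeys}: for any three rotations the sum of the three labels is even.

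First I would extract the ``type'' structure forced by the triangle condition. Fix $\pi_0$ and assign to each other vertex $\pi_k$ the \emph{type} $\lambda_{0k}\bmod 2$, declaring $\pi_0$ itself to have even type. Applying Proposition~\ref{pro:forkeys} to the triangle $\pi_0,\pi_k,\pi_\ell$ shows that whenever $\pi_k\pi_\ell$ is a core edge (label $1$), the labels $\lambda_{0k}$ and $\lambda_{0\ell}$ have opposite parities; hence the type assignment is a proper $2$-colouring of $\PKey{D}$, with colour classes $A$ (even type) and $B$ (odd type). The same computation gives that two rotations in the \emph{same} class are joined by an even label (so in $\{2,4\}$), while rotations in \emph{different} classes are joined by an odd label (so in $\{1,3\}$). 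In particular the hypothetical label-$4$ edge $\pi_0\pi_1$ must lie inside one class, say $A$.

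Next I would record the consequences of Proposition~\ref{pro:tempev}. Because the equation $2t_i+\sum_{j\neq i}(\lambda_{ij}-2)t_j=0$ has all $t_j>0$, it must contain a strictly negative coefficient; as labels are at least $1$, this means $\lambda_{ij}=1$ for some $j$, i.e.\ every rotation has at least one core neighbour, and these lie in the opposite class. Applied to $\pi_0$ and $\pi_1$, this produces label-$1$ edges (into $B$) at both endpoints of the $4$-edge. As auxiliary structural control I would also write $P,Q$ for the numbers of white vertices with rotation in $A,B$, and evaluate $\cru(D)=Z(5,n)$ using the label ranges above together with $\ucr{D}{a_i}=2n-4$ (established in the proof of Proposition~\ref{pro:tempev}); this yields an identity of the shape $f+g+h=(PQ-P-Q)/2$, where $f,g$ count the label-$4$ white pairs inside each class and $h$ the label-$3$ cross pairs, which in particular forces $P$ and $Q$ to be even and limits the room available for label-$4$ edges.

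The decisive step, and the one I expect to be the main obstacle, is to rule out the label-$4$ edge locally, following the method of Propositions~\ref{pro:4claw} and~\ref{pro:chimuelo}. I would take the white vertices $a_0,a_1$ realizing $\pi_0,\pi_1$ together with the label-$1$ neighbours produced above (and, when these are distinct, a short core path connecting them), and feed the resulting bounded sub-drawing of $K_{5,3}$ or $K_{5,4}$ into Lemma~\ref{lem:woo}. By Lemma~\ref{lem:wo} the edge $\pi_0\pi_1$ supplies an antiroute $P_{01}$ of size $4$; enumerating the size-$4$ antiroutes between distinct rotations (few, up to relabelling of $0,1,2,3,4$) and the size-$1$ antiroutes to the chosen neighbours, and intersecting them with Claim~\ref{cla:ch} and the parity constraints, I would show that the induced black-vertex antiroutes $Q_{k\ell}$ cannot be simultaneously realized by genuine rotations $\rot{D}{0},\dots,\rot{D}{4}$, exactly as the inconsistencies were obtained in Propositions~\ref{pro:4claw} and~\ref{pro:chimuelo}. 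The real difficulty is the size-$4$ antiroute bookkeeping, which is bulkier than the size-$1$ and size-$2$ cases already handled; once a single such inconsistency is reached, it contradicts $\lambda_{01}=4$, and hence $\Key{D}$ is $4$-free.
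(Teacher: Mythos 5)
Your preparatory observations are correct and consistent with the paper's framework: the parity $2$-colouring by $\lambda_{0k}\bmod 2$ (so that same-class labels lie in $\{2,4\}$ and cross-class labels in $\{1,3\}$), the fact that every equation of $\LL(\Key{D})$ needs a negative coefficient and hence every rotation has a label-$1$ neighbour, and the placement of the hypothetical label-$4$ edge inside one colour class. But none of this produces the contradiction, and the step you yourself flag as ``the decisive step'' is only announced, not performed. That is a genuine gap, and not merely an omitted computation: a label-$4$ edge between distinct rotations, even together with a handful of label-$1$ neighbours, is not locally forbidden in the way the configurations of Propositions~\ref{pro:4claw} and~\ref{pro:chimuelo} are. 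Those two propositions kill specific fully-labelled $K_4$'s; here you would have to kill \emph{every} bounded configuration containing a $4$-edge that survives the parity constraints, and there is no a priori reason the size-$4$ antiroute analysis via Lemma~\ref{lem:woo} closes off all of them. The obstruction to a label-$4$ edge in an optimal antipodal-free drawing is global, not local.

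The paper's proof makes this precise by a different mechanism that your plan does not touch. One first reduces to a drawing $D^o$ with exactly one label-$4$ edge $\pi_0\pi_1$ by repeatedly merging: move all white vertices of one rotation very close to a representative of the other (choosing the direction that does not increase crossings), which preserves optimality and antipodal-freeness and strictly decreases the number of label-$4$ edges. Performing the final merge in the two possible directions yields two $\{0,4\}$-free optimal drawings $E$ and $F$, to which the full classification of $\{0,4\}$-free cores (Proposition~\ref{pro:h}: the core is a $4$-cycle or $\Bur$) applies; this forces $4$-cycles $(\pi_0,\pi_2,\pi_3,\pi_4,\pi_0)$ in $\PKey{E}$ and $(\pi_1,\pi_2,\pi_3,\pi_4,\pi_1)$ in $\PKey{F}$ through the \emph{same} unaffected rotations $\pi_2,\pi_3,\pi_4$. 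Back in $\PKey{D^o}$ this exhibits two rotations, $\pi_2$ and $\pi_4$, each admitting size-$1$ antiroutes to the three distinct rotations $\pi_0,\pi_1,\pi_3$, contradicting Claim~\ref{cla:ch}. So the contradiction rests on Proposition~\ref{pro:h}, i.e.\ on the entire structural theory of cores developed in Sections~\ref{sec:someprco}--\ref{sec:someprrk4} (bipartiteness, connectivity, degree and girth bounds, at most $7$ vertices), none of which is visible from the $K_{5,3}$ or $K_{5,4}$ sub-drawing you propose to analyse. To repair your argument you would either have to import Proposition~\ref{pro:h} via some such merging device, or actually carry out and verify the size-$4$ antiroute case analysis for all admissible local configurations --- and the latter is not known to terminate in a contradiction.
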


\begin{proof}
By way of contradiction, suppose that $\Key{D}$ is not $4$-free. Then there exist distinct
rotations $\pi,\pi'$, and white vertices $a_i, a_j$ such that
$\rot{D}{a_i}=\pi$ and $\rot{D}{a_j}=\pi'$, and $\ucr{D}{a_i,a_j}=4$.

{Without loss of generality, suppose that $\ucr{D}{a_i} \le \ucr{D}{a_j}$.} We move, one by one, every vertex $a_j$ with rotation $\pi'$ very
close to $a_i$, so that in the resulting drawing $D'$ we have
$\ucr{D'}{a_j,a_k}=\ucr{D'}{a_i,a_k}$ for every vertex
$k\notin\{i,j\}$.
It is readily checked that the resulting
drawing $D'$ is also optimal, and $\Key{D'}$ has one fewer edge with
label $4$ than $\Key{D}$.  By repeating this process as many times as
needed, we arrive to a drawing $D^o$ such that $\Key{D^o}$ has exactly
one edge with label $4$ (if $\Key{D}$ has exactly one edge with label
$4$ to begin with, then we let $D^o=D$). Denote by $\pi_0, \pi_1$ the vertices of
$\Key{D^o}$ whose joining edge has label $4$. 

If we apply the described process one more time to $D^o$ with $\pi=\pi_0$ and
$\pi'=\pi_1$,  we obtain a $\{0,4\}$-free optimal drawing $E$ of
$K_{5,n}$. By Proposition~\ref{pro:h}, $\PKey{E}$ contains a
$4$-cycle $(\pi_0,\pi_2,\pi_3,\pi_4,\pi_0)$. Now if we apply the process to
$D^o$ with $\pi=\pi_1$ and $\pi'=\pi_0$, then we obtain another
$\{0,4\}$-free optimal drawing $F$ of $K_{5,n}$. 
Note that $\pi_2, \pi_3, \pi_4$ are not affected in the
process, and so $(\pi_1,\pi_2,\pi_3,\pi_4,\pi_1)$ is a $4$-cycle in
$\PKey{F}$. 
{Thus it follows that $\PKey{D^o}$ has
two degree-$3$ vertices $\pi_2$ and $\pi_4$,
plus the vertices $\pi_0,\pi_1,\pi_3$, each of which is joined to both $\pi_2$ and $\pi_4$ with an edge labelled $1$. This contradicts Claim~\ref{cla:ch}.}

\ignore{
For rotations $\pi,\pi'$, we let $\od{\pi,\pi'}$ denote their
{antidistance} (this is, we recall, the size of the shortest antiroute
joining them). Our conclusion about the structure of $\PKey{D^o}$
implies that there exist distinct rotations
$\pi_0,\pi_1,\pi_2,\pi_3,\pi_4$ such that 
(i) there are antiroutes of size $1$
from $\pi_2$ to $\pi_0,\pi_1$, and $\pi_3$; and
(ii) there are antiroutes of size $1$
from $\pi_4$ to $\pi_0, \pi_1$, and $\pi_3$.
}

\end{proof}

\begin{lemma}\label{lem:j}
Let $D$ be an antipodal-free,
optimal drawing of $K_{5,n}$, with $n$ even. Then $\PKey{D}$ is
isomorphic either to the $4$-cycle or to $\Bur$.
\end{lemma}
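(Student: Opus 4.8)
The plan is to combine the two immediately preceding results, which between them carry all the real content, so that Lemma~\ref{lem:j} comes out as essentially a one-line corollary. The first thing I would do is record the dictionary between the geometric and the combinatorial formulations of the hypothesis. By Remark~\ref{rem:clean} we may assume $D$ is clean, so $\Key{D}$ is well defined. A drawing is antipodal-free precisely when no pair of white vertices $a_i,a_j$ satisfies $\ucr{D}{a_i,a_j}=0$; by the definition of the key, this says exactly that no edge of $\Key{D}$ carries the label $0$, i.e.\ that $\Key{D}$ is $0$-free. Making this identification explicit is what lets me feed $D$ into the hypotheses of the two propositions below.

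The single step that actually does any work is to upgrade $0$-freeness to $\{0,4\}$-freeness. This is exactly the content of Proposition~\ref{pro:r}: for an antipodal-free optimal drawing of $K_{5,n}$ with $n$ even, $\Key{D}$ is $4$-free. Since we have just observed that $\Key{D}$ is also $0$-free, it is therefore $\{0,4\}$-free.

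With $\{0,4\}$-freeness in hand, Proposition~\ref{pro:h} applies verbatim (its hypothesis is precisely that $D$ is an optimal drawing of $K_{5,n}$ with $n$ even and $\Key{D}$ is $\{0,4\}$-free) and yields that $\PKey{D}$ is isomorphic to the $4$-cycle or to $\Bur$. That is exactly the conclusion of the lemma, so nothing further is required.

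I do not expect a genuine obstacle at this stage: all the difficulty has been front-loaded. Proposition~\ref{pro:h} absorbs the structural classification of $\{0,4\}$-free cores, resting on the forbidden-subgraph, bipartiteness, connectivity, minimum-degree, and girth results of Sections~\ref{sec:someprrk1}--\ref{sec:someprrk4}; and Proposition~\ref{pro:r} eliminates label-$4$ edges by a vertex-coalescing argument that itself invokes Proposition~\ref{pro:h} together with Claim~\ref{cla:ch}. Consequently the only point that needs care in writing Lemma~\ref{lem:j} is to state plainly that ``antipodal-free'' means ``$0$-free'', so that the reader sees at once why the hypotheses of Propositions~\ref{pro:r} and~\ref{pro:h} are satisfied and why the two results chain together to give the claim.
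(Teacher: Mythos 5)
Your proposal matches the paper's proof exactly: invoke Proposition~\ref{pro:r} for $4$-freeness, observe that antipodal-freeness gives $0$-freeness so $\Key{D}$ is $\{0,4\}$-free, and conclude via Proposition~\ref{pro:h}. Nothing is missing and no further comment is needed.
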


\begin{proof}
By Proposition~\ref{pro:r}, $\Key{D}$ is $4$-free. By hypothesis
$\Key{D}$ is also $0$-free (since $D$ is antipodal-free), and so $\Key{D}$
is $\{0,4\}$-free. The lemma then follows by Proposition~\ref{pro:h}.
\end{proof}

\section{Proof of Theorem~\ref{thm:main1}.}\label{sec:proofmain}

We need one final result before moving on to the proof of
Theorem~\ref{thm:main1}.
In the following statement and its proof, we sometimes use the notation
$(i,j,k,\ell,m)$ for cyclic permutations (that is, we separate the
elements with commas, as opposed to our usual practice in which for
such a cyclic permutation we would have written $(ijk\ell m)$). 

\begin{proposition}\label{pro:he} 
Let $D$ be a drawing of $K_{5,n}$. Suppose that $\Key{D}$ is $\{0,4\}$-free, and that $\PKey{D}$ is a $4$-cycle $(\pi_0,\pi_1,\pi_2,\pi_3,\pi_0)$.  Suppose that $\pi_0=( 0 1 2 3 4)$. Then there exists an $m\in \{0,1,2,3,4\}$ and a relabelling of $\{0,1,2,3,4\}$ that leaves $\pi_0$ invariant, such that (operations are modulo $5$):
\begin{itemize}
\item $\pi_2=({m}, {m+1}, {m+3}, {m+4},$ $ {m+2})$; and
\item $\{\pi_1,\pi_3\} = \{(m,m+4,m+2,m+3,m+1),(m,{m+4},m+3,m+1,m+2)\}$.
\end{itemize}
\end{proposition}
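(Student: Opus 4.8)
The plan is to read off the edge labels forced by the $4$-cycle structure, use the relabelings that fix $\pi_0=(01234)$ to cut down to two cases for the pair $\{\pi_1,\pi_3\}$, determine $\pi_2$ in each case, and finally discard the bad case by recognizing it as the forbidden key of Proposition~\ref{pro:chimuelo}.

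First I would fix the labels on the diagonals. Since $\PKey{D}$ is the $4$-cycle $(\pi_0,\pi_1,\pi_2,\pi_3,\pi_0)$, its edges $\pi_0\pi_1,\pi_1\pi_2,\pi_2\pi_3,\pi_3\pi_0$ carry label $1$, while $\pi_0,\pi_2$ lie in one chromatic class and $\pi_1,\pi_3$ in the other. Proposition~\ref{pro:forkeys}, combined with the hypothesis that $\Key{D}$ is $\{0,4\}$-free, then forces the diagonals $\pi_0\pi_2$ and $\pi_1\pi_3$ to have label $2$. In particular $\pi_1$ and $\pi_3$ are joined to $\pi_0$ by label-$1$ edges, so by Lemma~\ref{lem:wo} each has an antiroute of size $1$ to $\pi_0=(01234)$. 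As established in the proof of Proposition~\ref{pro:b}, the only rotations with an antiroute of size $1$ to $(01234)$ are the five elements of $L:=\{(01432),(04312),(04231),(03421),(03214)\}$; hence $\pi_1,\pi_3\in L$ are distinct.

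Next I would exploit the symmetry. The relabelings of $\{0,1,2,3,4\}$ that fix $(01234)$ are exactly the five cyclic shifts $j\mapsto j+c\ (\mathrm{mod}\ 5)$, and a direct check shows they permute $L$ in the single cycle $(01432)\to(04312)\to(04231)\to(03421)\to(03214)\to(01432)$. Since a $5$-cycle fixes no $2$-element subset, this action splits the ten unordered pairs from $L$ into exactly two orbits of size five: the pairs consecutive in the above cyclic order and the pairs that are not. Thus, after a suitable shift (to be undone at the very end, where it manufactures the parameter $m$), I may assume $\{\pi_1,\pi_3\}$ equals either $\{(04312),(04231)\}$ (consecutive) or $\{(01432),(04231)\}$ (non-consecutive). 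In each case $\pi_2$ is a common neighbour of $\pi_1$ and $\pi_3$ in $\PKey{D}$ other than $\pi_0$; by Proposition~\ref{pro:b}(\ref{it:c2}) there are at most two such common neighbours, and intersecting the size-$1$ neighbourhoods (again via Lemma~\ref{lem:wo}) yields in each case precisely $\pi_0$ and one further rotation — namely $(01342)$ in the consecutive case and $(03241)$ in the non-consecutive case.

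Finally I would eliminate the spurious branch. In the non-consecutive case the four rotations $(01234),(01432),(03241),(04231)$ carry exactly the labels of the key in Figure~\ref{fig:chim} (cycle edges $1$, diagonals $2$), so $\Key{D}$ would contain that key, contradicting Proposition~\ref{pro:chimuelo}. Hence only the consecutive case survives, giving $\{\pi_1,\pi_3\}=\{(04231),(04312)\}$ and $\pi_2=(01342)$; writing these cyclic permutations as $(0,4,2,3,1)$, $(0,4,3,1,2)$, and $(0,1,3,4,2)$ shows this is exactly the asserted configuration for $m=0$. Undoing the cyclic shift $j\mapsto j+c$ used in the reduction adds $c$ to every symbol, turning $(01234)$ back into itself and the $m=0$ forms into the stated ones with $m=c$; the arbitrary ordering of $\pi_1,\pi_3$ supplies the two members of $\{\pi_1,\pi_3\}$. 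The hard part will be the purely mechanical but error-prone rotation bookkeeping: verifying the $5$-cycle action on $L$, computing the size-$1$ neighbourhoods so that the common-neighbour intersections come out to the claimed rotations, and checking that the non-consecutive case is literally (not merely similar to) Figure~\ref{fig:chim}. Once these identifications are in hand, the rest is a finite verification.
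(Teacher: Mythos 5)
Your proposal is correct and takes essentially the same route as the paper's own proof: both reduce $\pi_1,\pi_3$ to the five rotations at antidistance $1$ from $\pi_0$, use the cyclic relabellings fixing $(01234)$ to split into the distance-$1$ and distance-$2$ cases, kill the distance-$2$ case by identifying it with the forbidden key of Proposition~\ref{pro:chimuelo}, and pin down $\pi_2$ as the unique remaining common antidistance-$1$ neighbour. Your orbit-of-pairs phrasing is just a repackaging of the paper's case split on $k-m$, and your explicit derivation of the diagonal labels $2$ is a (welcome) detail the paper leaves implicit.
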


\begin{proof}
The reverse permutation $\overline{\pi_0}$ of $\pi_0$ is
$( 4 3 2 1 0)$. Since $\pi_0\pi_1$ and $\pi_0\pi_3$ have label
$1$ in $\Key{D}$, it follows that each of $\pi_1$ and $\pi_3$ is
obtained from $\overline{\pi_0}$ by performing one transposition.
Thus there exist distinct $k,m\in\{0,1,2,3,4\}$ such that $\{\pi_1,\pi_3\}=\{(k,k+4,k+2,k+3,k+1),({m} , {m+4}, {m+2}, m+3,m+1)\}$.

Suppose that $k=m+3$. Using a relabelling on $\{0,1,2,3,4\}$ that leaves $(01234)$ invariant, we may assume that $m=2$ and $k=0$. Then $\{\pi_1,\pi_3\} = \{(04231),(03214)\}$. Now since the edge joining $\pi_2$ to each of $\pi_1$ and $\pi_3$ in $\Key{D}$ has label $1$, it follows that there are antiroutes of size $1$ from $\pi_2$ to each of $\pi_1$ and $\pi_3$. It is easy to check that the only such possibility is that $\pi_2=(04132)$. Using the relabelling $j\mapsto j-2$ on $\{0,1,2,3,4\}$, we get $\{\pi_0,\pi_1,\pi_2,\pi_3\}=\{(01234),(01432),(03241),(04231)\}$. But then $\Key{D}$ is the labelled graph in Fig.~\ref{fig:chim}, contradicting Proposition~\ref{pro:chimuelo}.  An analogous contradiction is obtained under the assumption $k=m+2$. Thus $k=m+1$ or $k=m+4$.

Suppose that $k=m+1$. Thus $\{\pi_1,\pi_3\}=\{(m+1,m,m+3,m+4,m+2),(m,m+4,m+2,m+3,m+1)\}$. Using the relabelling $j\mapsto j-1$ on $\{0,1,2,3,4\}$ (which obviously leaves $(01234)$ invariant), we obtain $\{\pi_1,\pi_3\}=\{(m,m+4,m+2,m+3,m+1),(m+4,m+3,m+1,m+2,m)\}=\{(m,m+4,m+2,m+3,m+1),(m,m+4,m+3,m+1,m+2)\}$, as required. Finally, since the edge joining $\pi_2$ to each of $\pi_1$ and $\pi_3$ in $\Key{D}$ has label $1$, it follows that $\pi_2=(m,m+1,m+3,m+4,m+2)$. The case $k=m+4$ is handled in a totally analogous manner.
\end{proof}

\begin{proposition}\label{pro:wasc}
Suppose that $D$ is a drawing of $K_{5,n}$. Suppose that $\Key{D}$ is
$\{0,4\}$-free, and that $\PKey{D}$ is isomorphic to $\Bur$. Let the
vertices of $\PKey{D}$ be labeled
$\pi_0,\pi_1,\pi_2,\pi_3,\pi_4,\pi_5$, so that
$(\pi_0,\pi_1,\pi_2,\pi_3,\pi_0)$ and $(\pi_0,\pi_4,\pi_5,\pi_3,\pi_0)$ are $4$-cycles.
Suppose that $\pi_0=( 0 1 2 3 4)$. Then there exists
an $m\in \{0,1,2,3,$ $4\}$ and a relabelling of $\{0,1,2,3,4\}$ that leaves $\pi_0$ invariant, such that (operations are modulo $5$):
\begin{itemize}
\item $\pi_3=({m}, {m+4}, {m+3},$ $ {m+1},m+2)$;
\item $\{(\pi_1,\pi_2),(\pi_4,\pi_5)\}=
\{((m,m+4,m+2,m+3,m+1), (m,m+1,m+3,m+4,m+2)),((m,m+1,m+4,m+3,m+2),(m,m+2,m+3,m+1,m+4))\}$.
\end{itemize}
\end{proposition}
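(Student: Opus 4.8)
The plan is to reduce Proposition~\ref{pro:wasc} to Proposition~\ref{pro:he} by cutting $\Bur$ along its diametral edge into its two constituent $4$-cycles. First I would record the coarse structure: $\pi_0$ and $\pi_3$ are the two degree-$3$ vertices of $\Bur$, they are joined by the diametral edge $\pi_0\pi_3$, and $\pi_1,\pi_2,\pi_4,\pi_5$ all have degree $2$; the bipartition classes of $\PKey{D}$ are $\{\pi_0,\pi_2,\pi_5\}$ and $\{\pi_1,\pi_3,\pi_4\}$. Since $\pi_0\pi_3$ is an edge of the core it has label $1$, so by Lemma~\ref{lem:wo} there is an antiroute of size $1$ from $\pi_0=(01234)$ to $\pi_3$. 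Recalling (as in the proof of Proposition~\ref{pro:he}) that the five rotations at antidistance $1$ from $(01234)$ are exactly $R_j:=(j,j+4,j+2,j+3,j+1)$ for $j\in\{0,1,2,3,4\}$ (operations modulo $5$), I get $\pi_3=R_t$ for some $t$; and since $\pi_0\pi_1$ and $\pi_0\pi_4$ are also core edges, $\pi_1,\pi_4\in\{R_0,\dots,R_4\}$ as well.

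Next I would locate $\pi_1$ and $\pi_4$. Deleting from $D$ every white vertex whose rotation is $\pi_4$ or $\pi_5$ yields a clean drawing $D_1$ whose key is still $\{0,4\}$-free and whose core is precisely the $4$-cycle $(\pi_0,\pi_1,\pi_2,\pi_3,\pi_0)$: deletion leaves the labels among the surviving rotations untouched, and $\pi_0\pi_2,\pi_1\pi_3$ are non-edges of $\Bur$, hence not labelled $1$. Thus Proposition~\ref{pro:he} applies to $D_1$ and forces $\{\pi_1,\pi_3\}$ to be a pair $\{R_a,R_{a+1}\}$ of \emph{consecutive} indices (the alternative, index-distance $\pm2$, is exactly what produces the forbidden configuration of Proposition~\ref{pro:chimuelo} inside the proof of Proposition~\ref{pro:he}), and it pins down $\pi_2$. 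Hence $\pi_1\in\{R_{t-1},R_{t+1}\}$. Deleting instead the $\pi_1$- and $\pi_2$-vertices and repeating the argument on the $4$-cycle $(\pi_0,\pi_4,\pi_5,\pi_3,\pi_0)$ gives $\pi_4\in\{R_{t-1},R_{t+1}\}$. As $\pi_1\neq\pi_4$, necessarily $\{\pi_1,\pi_4\}=\{R_{t+1},R_{t-1}\}$, so the three neighbours $R_{t-1},R_t,R_{t+1}$ of $\pi_0$ occupy three consecutive indices.

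With $m:=t+1$ I would now read off the forms. Then $\pi_3=R_t=R_{m+4}=(m,m+4,m+3,m+1,m+2)$, as asserted. Label the arm from $\pi_0$ to $\pi_3$ through $R_m=R_{t+1}$ as $(\pi_1,\pi_2)$ and the arm through $R_{m+3}=R_{t-1}$ as $(\pi_4,\pi_5)$. For the first arm $\{\pi_1,\pi_3\}=\{R_m,R_{m+4}\}$, and Proposition~\ref{pro:he} gives $\pi_2=(m,m+1,m+3,m+4,m+2)$; for the second arm $\{\pi_4,\pi_3\}=\{R_{m+3},R_{m+4}\}=\{R_{m+4},R_{(m+4)+4}\}$, and the same proposition (with parameter $m+4$) gives $\pi_5=(m+4,m,m+2,m+3,m+1)=(m,m+2,m+3,m+1,m+4)$. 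This is precisely the asserted unordered pair $\{(\pi_1,\pi_2),(\pi_4,\pi_5)\}$.

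The main difficulty I expect is bookkeeping rather than a new idea. Proposition~\ref{pro:he} is invoked separately for the two $4$-cycles, and each invocation carries its own cyclic shift (the only relabellings fixing $(01234)$ are the shifts $j\mapsto j+c$, which act on the $R_j$ by $R_j\mapsto R_{j+c}$) and its own parameter. The work is to check that a \emph{single} relabelling and a single $m$ simultaneously normalise both arms; this is exactly where the shared vertices $\pi_0$ and $\pi_3$ do the job, since $\pi_3=R_t$ is common to both applications and shifts preserve index-differences, so the choice $m=t+1$ is forced to be consistent for the two arms. The residual ambiguity of which neighbour of $\pi_0$ is named $\pi_1$ and which is named $\pi_4$ is harmless because the conclusion is an unordered pair of arms. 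The only genuine computations---that each arm's middle vertex takes the stated value---are already contained in Proposition~\ref{pro:he}.
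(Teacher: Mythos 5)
Your proof is correct and follows essentially the same route as the paper's: two applications of Proposition~\ref{pro:he}, one per $4$-cycle of $\Bur$, glued along the shared vertices $\pi_0$ and $\pi_3$ (the paper aligns the two applications by computing $\{\pi_3\}=A\cap B$ and ruling out the index-distances $\pm 2$, whereas you anchor $\pi_3=R_t$ via the diametral edge and observe that both neighbours of $\pi_0$ must then lie in $\{R_{t-1},R_{t+1}\}$; the computations agree). Your deletion argument making the hypothesis of Proposition~\ref{pro:he} hold literally is a welcome extra bit of care that the paper glosses over, but it does not change the substance of the argument.
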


\begin{proof}
By Proposition~\ref{pro:he}, there exists an $m\in\{0,1,2,3,4\}$ such
that $\pi_2=(m,m+1,m+3,m+4,m+2)$ and
$\{\pi_1,\pi_3\}=A:=\{(m,m+4,m+2,m+3,m+1),(m,m+4,m+3,m+1,m+2)\}$. By the
same proposition, there exists a $k\in\{0,1,2,3,4\}$ such that
$\pi_5=(k,k+1,k+3,k+4,k+2)$ and
$\{\pi_3,\pi_4\}=B:=\{(k,k+4,k+2,k+3,k+1),(k,k+4,k+3,k+1,k+2)\}$.

Since $\pi_2\neq\pi_5$, it follows that $m\neq k$. Thus $k$ is either $m+1,m+2,m+3$, or $m+4$. Note that if $k=m+2$ or $k=m+3$ then $A\cap B=\emptyset$, which contradicts that $\{\pi_3\}=A\cap B$. Thus $k$ is either $m+1$ or $m+4$. 

We work out the details for the case $k=m+1$; the case $k=m+4$ is handled in a totally analogous manner. Since $\{\pi_3\}=A\cap B$, it follows that
$\pi_3=(m,m+4,m+2,m+3,m+1)=(m+1,m,m+4,m+2,m+3)$. Therefore
$\pi_1=(m,m+4,m+3,m+1,m+2)=(m+1,m+2,m,m+4,m+3)$,
$\pi_2=(m,m+1,m+3,m+4,m+2)=(m+1,m+3,m+4,m+2,m)$,
$\pi_4=(m+1,m,m+3,m+4,m+2)$, and
$\pi_5=(m+1,m+2,m+4,m,m+3)$. Using the relabelling $j\to j-1$ on $\{0,1,2,3,4\}$ (which leaves $(01234)$ invariant), we obtain $\pi_1=(m,m+1,m+4,m+3,m+2)$, $\pi_2=(m,m+2,m+3,m+1,m+4)$, $\pi_3=(m,m+4,m+3,m+1,m+2)$ $\pi_4=(m,m+4,m+2,m+3,m+1)$, and $\pi_5=(m,m+1,m+3,m+4,m+2)$. 
\end{proof}

\begin{proof}[Proof of Theorem~\ref{thm:main1}]
Let $D$ be an antipodal-free drawing of $K_{5,n}$, with $n$ even. In
view of Proposition~\ref{pro:same} (see Remark~\ref{rem:clean}), we
may assume that $D$ is clean, so that $\Key{D}$ and $\PKey{D}$ are
well-defined.

In view of Lemma~\ref{lem:j}, $\PKey{D}$ is isomorphic either to the $4$-cycle or to $\Bur$.

\smallskip
\noindent{\sc Case 1.} {\em $\Key{D}$ is isomorphic to $\Bur$.}
\smallskip

In this case $\Key{D}$ has $6$ vertices, which we label $\pi_0, \pi_1,
\pi_2, \pi_3, \pi_4, \pi_5$, so that $(\pi_0,\pi_1,\pi_2,\pi_3,\pi_0)$
and $(\pi_0,\pi_4,\pi_5,\pi_3,\pi_0)$ are $4$-cycles. For
$i,j\in\{0,1,2,3,4,5\}$, $i\neq j$, let $\lambda_{ij}$ be the label of
the edge $\pi_i\pi_j$. Since $(\pi_0,\pi_1,\pi_2,$ $\pi_3,\pi_0)$ and $(\pi_0,\pi_4,\pi_5,\pi_3,\pi_0)$ are $4$-cycles in $\PKey{D}$, it follows that all the edges in these $4$-cycles have label $1$ in $\Key{D}$; that is, $\lambda_{01}=\lambda_{12} = \lambda_{23} = \lambda_{03} =\lambda_{04}=\lambda_{45}=\lambda_{35}=1$. 
By Proposition~\ref{pro:forkeys},
$\lambda_{02}$ is even. Since $\Key{D}$ is
antipodal-free, and (by Property (\ref{pro:pfu}) of a clean drawing)
$\lambda_{ij}\le 4$ for all $i,j$, it follows that $\lambda_{02}$ is either $2$ or $4$. By Proposition~\ref{pro:r} $\Key{D}$ is $4$-free, hence $\lambda_{02}=2$. The same argument shows that 
$\lambda_{05}=\lambda_{13}=\lambda_{14}=\lambda_{25}=\lambda_{34}=2$. Since $\lambda_{35}=1$ and $\lambda_{13}=2$, by Proposition~\ref{pro:forkeys}, $\lambda_{15}$ is odd. 
If $\lambda_{15}=1$, then $\{\pi_0, \pi_5\}\cup \{\pi_1,\pi_2,\pi_4\}$ is a $K_{2,3}$ in $\PKey{D}$, contradicting Proposition~\ref{pro:forkeys}; thus $\lambda_{15}=3$. An analogous argument shows that $\lambda_{24}=3$.

The linear system $\LL(\Key{D})$ associated to $\Key{D}$ (see
Definition~\ref{def:linsys}) is then:
\begin{align}
\begin{matrix}\label{sys:bur}
\begin{tabular}{c c r c r c r c r c r c r c r c r}
  $E_0$ & : &  $2t_0 $ & $-$ & $ t_1$ &  &  & $-$  & $t_3 $ & $-$ & $t_4$ & & &  $=$ & $0$. \\
  $E_1$ & : &  $-t_0 $ & $+$ & $2 t_1$& $-$  & $t_2$ &  &  & &  & $+$& $t_5$ & $=$ & $0$. \\
  $E_2$ & : &  & $-$ & $t_1$ & $+$ & $2t_2$ & $-$ & $t_3$ & $+$ & $t_4$ & & & $=$ & $0$. \\
  $E_3$ & : &  $-t_0$ &  &  & $-$ & $t_2$ & $+$ & $2t_3$ &  &  & $-$ & $t_5$ & $=$ & $0$.\\
  $E_4$ & : & $-t_0$ &  &  & $+$  & $t_2$ &  &  & $+$ & $2t_4$ & $-$ & $t_5$ & $=$ & $0$. \\
  $E_5$ & : &  & $+$ & $t_1$ &  &  & $-$ & $t_3 $ & $-$ & $t_4$ & $+$ & $2t_5$ & $=$ & $0$.  
\end{tabular}
\end{matrix}
\end{align}

It is straightforward to check that if $(t_0,t_1,t_2,t_3,t_4, t_5)$ is a positive solution to this system, then $t_1=t_2$, $t_4=t_5$ and $t_0 = t_3 = t_1+t_4$. By Proposition~\ref{pro:tempev}, this implies that $n\equiv 0$ $(\mdl{4})$. This proves \eqref{it:ma1}.

We have thus proved that the white vertices of $D$ are partitioned
into $6$ classes $\cc_0,\cc_1,\cc_2,\cc_3, \cc_4, \cc_5$, such that
$|\cc_1|=|\cc_2|$, $|\cc_4|=|\cc_5|$,
$|\cc_0|=|\cc_3|=|\cc_1|+|\cc_4|$, and such that for $i=0,1,2,3,4,5$,
each vertex in $\cc_i$ has rotation $\pi_i$. Let $r:=|\cc_1|$ and
$s:=|\cc_4|$, so that $|\cc_2|=r$, $|\cc_5|=s$, and
$|\cc_0|=|\cc_3|=r+s$.  Note that $4(r+s)=n$.

If necessary, relabel $\{0,1,2,3,4\}$ so that $\pi_0=( 0 1 2 3 4)$. By Proposition~\ref{pro:wasc}, perhaps after a further relabelling of $\{0,1,2,3,4\}$ (that leaves $\pi_0$ invariant), there exists an $m\in \{0,1,2,3,$ $4\}$ such that $\pi_3=({m}, {m+4}, {m+3},$ $ {m+1},m+2)$, and $\{(\pi_1,\pi_2),(\pi_4,\pi_5)\}=\{((m,m+4,m+2,m+3,m+1), (m,m+1,m+3,m+4,m+2)),((m,m+1,m+4,m+3,m+2),(m,m+2,m+3,m+1,m+4))\}$. Now perform the further relabelling $j\mapsto j-m$. After this relabelling (which again leaves $\pi_0$ invariant), we have $\pi_3=(04312)$ and $\{(\pi_1,\pi_2),(\pi_4,\pi_5)\}=\{((04231),(01342)),((01432),(02314))\}$. 

We have thus proved that (perhaps after a relabelling of $\{0,1,2,3,4\}$) there exist integers $r,s$ such that $D$ has $r+s$ vertices with rotation $\pi_0=(01234)$, $r$ vertices with rotation $\pi_1=(04231)$, $r$ vertices with rotation $\pi_2=(01342)$, $r+s$ vertices with rotation $\pi_3=(04312)$, $s$ vertices with rotation $\pi_4=(01432)$, and $s$ vertices with rotation $\pi_5=(02314)$. That is, $D$ is isomorphic to the drawing $D_{r,s}$ from Section~\ref{sec:spdr}.

\medskip
\noindent{\sc Case 2.} {\em $\Key{D}$ is isomorphic to the $4$-cycle.}
\medskip

In this case $\Key{D}$ has $4$ vertices, which we label $\rho_0,\rho_1,\rho_2,\rho_3$, so that $(\rho_0,\rho_1,\rho_2,\rho_3,\rho_0)$ is a cycle. The linear system $\LL(\Key{D})$ associated to $\Key{D}$ is the one that results by taking $t_4=t_ 5=0$ in the linear system \eqref{sys:bur}, and omitting the equations $E_4$ and $E_5$.

It is straightforward to check that if $(t_0,t_1,t_2,t_3)$ is a solution to this system, then $t_0=t_1=t_2=t_3$. By Proposition~\ref{pro:tempev}, this implies that $n\equiv 0$ (mod $4$). This proves \eqref{it:ma1}.

Thus the white vertices of $D$ are partitioned into $4$ classes $\cc_0,\cc_1,\cc_2,\cc_3$, each of size $n/4$, so that each vertex in class $\cc_i$ has rotation $\rho_i$.

Label the vertices $0,1,2,3,4$ so that $\rho_0=( 0 1 2 3 4)$. Then, by Proposition~\ref{pro:he}, possibly after a relabelling of $\{0,1,2,3,4\}$ that leaves $\rho_0$ invariant, there is an $m\in\{0,1,2,3,4\}$ such that $\rho_2=({m},  {m+1},  {m+3},  {m+4},$ $ {m+2})$, and $\{\rho_1,\rho_3\} = \{(m,m+4,m+2,m+3,m+1),( m, {m+4},m+3,m+1,m+2)\}$. 
Now we perform the relabelling $j\mapsto j-m$ on $\{0,1,2,3,4\}$ (which obviously leaves $\rho_0$ invariant), we obtain $\rho_2=(01342)$ and $\{\rho_1,\rho_3\}=\{(04231),(04312)\}$. 

We have thus proved that $D$ has $r$ vertices with rotation $(01234)$, $r$ vertices with rotation $(01342)$, $r$ vertices with rotation $(04231)$, and $r$ vertices with rotation $(04312)$. That is, $D$ is isomorphic to the drawing $D_{r,0}$ from Section~\ref{sec:spdr}, with $r=n/4$. 
\end{proof}



\ignore{
\section{Concluding remarks.}\label{sec:concludingremarks}
Can we put really something here? It's already a very long
paper. Perhaps a discussion of what makes it hard for $n$ odd?
}


\end{document}